\newtheorem{theorem}{Theorem}[section]
\newtheorem{lemma}{Lemma}[section]
\newtheorem{corollary}{Corollary}[section]
\theoremstyle{definition}
\newtheorem{remark}{Remark}[section]
\numberwithin{equation}{section}
\begin{document}

\title[High multiplicity and chaos for an indefinite problem]{High multiplicity and chaos for an indefinite \\ problem arising from genetic models}

\author[A.~Boscaggin]{Alberto Boscaggin}

\address{
Department of Mathematics ``Giuseppe Peano'', University of Torino\\
Via Carlo Alberto 10, 10123 Torino, Italy}

\email{alberto.boscaggin@unito.it}

\author[G.~Feltrin]{Guglielmo Feltrin}

\address{
Department of Mathematics, Computer Science and Physics, University of Udine\\
Via delle Scienze 206, 33100 Udine, Italy}

\email{guglielmo.feltrin@uniud.it}

\author[E.~Sovrano]{Elisa Sovrano}

\address{
Istituto Nazionale di Alta Matematica ``Francesco Severi'' c/o
Department of Mathematics and Geosciences, University of Trieste\\
Via Valerio 12/1, 34127 Trieste, Italy}

\email{esovrano@units.it}

\thanks{Work written under the auspices of the Grup\-po Na\-zio\-na\-le per l'Anali\-si Ma\-te\-ma\-ti\-ca, la Pro\-ba\-bi\-li\-t\`{a} e le lo\-ro Appli\-ca\-zio\-ni (GNAMPA) of the Isti\-tu\-to Na\-zio\-na\-le di Al\-ta Ma\-te\-ma\-ti\-ca (INdAM).
The first two authors are supported by the project ERC Advanced Grant 2013 n.~339958 ``Complex Patterns for Strongly Interacting Dynamical Systems - COMPAT''.
The first author is supported by INdAM-GNAMPA project ``Il modello di Born--Infeld per l'elettromagnetismo nonlineare: esistenza, regolarit\`{a} e molteplicit\`{a} di soluzioni''.
The third author is supported by INdAM project ``Problems in Population Dynamics: from Linear to Nonlinear Diffusion''.
\\
\textbf{Preprint -- May 2019}} 

\subjclass{34B08, 34B18, 34C25, 47H11.}

\keywords{Indefinite weight, logistic-type nonlinearity, positive solutions, multiplicity results, chaotic dynamics, coincidence degree theory.}

\date{}

\dedicatory{}

\begin{abstract}
We deal with the periodic boundary value problem associated with the parameter-dependent second-order nonlinear differential equation
\begin{equation*}
u'' + cu' + \bigr{(} \lambda a^{+}(x) - \mu a^{-}(x) \bigr{)} g(u) = 0,
\end{equation*}
where $\lambda,\mu>0$ are parameters, $c\in\mathbb{R}$, $a(x)$ is a locally integrable $P$-periodic sign-changing weight function, and $g\colon\mathopen{[}0,1\mathclose{]}\to\mathbb{R}$ is a continuous function such that $g(0)=g(1)=0$, $g(u)>0$ for all $u\in\mathopen{]}0,1\mathclose{[}$, with superlinear growth at zero. A typical example for $g(u)$, that is of interest in population genetics, is the logistic-type nonlinearity $g(u)=u^{2}(1-u)$.

Using a topological degree approach, we provide high multiplicity results by exploiting the nodal behaviour of $a(x)$. More precisely, when $m$ is the number of intervals of positivity of $a(x)$ in a $P$-periodicity interval, we prove the existence of $3^{m}-1$ non-constant positive $P$-periodic solutions, whenever the parameters $\lambda$ and $\mu$ are positive and large enough. 
Such a result extends to the case of subharmonic solutions. Moreover, by an approximation argument, we show the existence of a countable family of globally defined solutions with a complex behaviour, coded by (possibly non-periodic) bi-infinite sequences of $3$ symbols.
\end{abstract}

\maketitle

\clearpage

\section{Introduction and statement of the results}\label{section-1}

In this paper, we investigate existence and multiplicity of non-constant positive solutions for the parameter-dependent second-order ordinary differential equation
\begin{equation*}
u'' + c u'+\bigl{(}\lambda a^{+}(x)-\mu a^{-}(x)\bigr{)} g(u) = 0,
\leqno{(\mathscr{E}_{\lambda,\mu})}
\end{equation*}
where $\lambda$ and $\mu$ are positive real parameters, $c \in \mathbb{R}$, $a^{+}(x)$ and $a^{-}(x)$ are the positive and the negative part, respectively, of a $P$-periodic and locally integrable sign-changing function $a \colon \mathbb{R} \to \mathbb{R}$, and $g\colon\mathopen{[}0,1\mathclose{]}\to\mathbb{R}$ is a continuous map satisfying the sign condition 
\begin{equation*}
g(0) = g(1) = 0, \qquad g(u) > 0 \quad \text{for all $u\in\mathopen{]}0,1\mathclose{[}$},
\leqno{(g_{*})}
\end{equation*}
and the superlinear growth condition at zero
\begin{equation*}
\lim_{u\to 0^{+}} \dfrac{g(u)}{u} = 0.
\leqno{(g_{0})}
\end{equation*}
Following a terminology popularized in \cite{HeKa-80}, we refer to $(\mathscr{E}_{\lambda,\mu})$ as an \textit{indefinite} equation, meaning that the weight function $a(x)$ changes sign. In the last decades this kind of equations has been widely investigated, both in the ODE and in the PDE setting, starting from the classical contributions \cite{AlTa-96,AmLG-98,BaPoTe-88,BeCDNi-94,Bu-76}
and till to very recent ones \cite{BoFe-PP,DoZa-PP,HaZa-17,LGOmRi-17,SoZa-17,Te-18,Ur-17}; we refer the reader to \cite{Fe-18} for a quite exhaustive bibliography on the subject.

The mathematical questions we address here are motivated by the study of the spatial effects on the variation in the genetic material along environmental gradients. In population genetics, when individuals of a continuously distributed population mate at random in their habitat, and no genetic drift nor new mutations appear, the evolution of the frequencies of two alleles, $A_{1}$ and $A_{2}$, at a single locus under the action of migration and selection can be described through the reaction-diffusion boundary value problem
\begin{equation}\label{eq-gen}
\begin{cases}
\, \partial_{t} u =\sum_{i,j} V_{i,j}(x)\partial_{x_{i}x_{j}} u +b(x)\cdot \nabla u+h(x,u)	&\text{in $\Omega\times \mathopen{]}0,+\infty\mathclose{[}$,}\\
\, 0\leq u\leq 1 	&\text{in $\Omega\times \mathopen{]}0,+\infty\mathclose{[}$,}\\
\, \nu(x) 	\cdot V(x) \nabla u=0		&\text{on $\partial\Omega\times \mathopen{]}0,+\infty\mathclose{[}$,}
\end{cases}
\end{equation}
where $u(x, t)$ and $1-u(x, t)$ denote the allele frequency of $A_{1}$ and $A_{2}$, respectively (cf.~\cite{LNN-13,Na-89}).
The set $\Omega\subset\mathbb{R}^{N}$ ($N\geq 1$) represents the habitat that is assumed to be a bounded domain with smooth boundary $\partial\Omega$
and outward unit normal vector $\nu(x)$. The matrix-valued function $V(x)$ and the vector-valued function $b(x)$ are given and characterize the migration. Finally, $h(x,u)$ is a nonlinear term which describes the effects of the selection and satisfies $h(x,0)=0=h(x,1)$ for all $x\in\Omega$, so that $u\equiv0$ and $u\equiv1$ are constant solutions of problem \eqref{eq-gen} that means that allele $A_{1}$ is absent or is fixed in the population, respectively. 

In this context, available theory also assumes that migration is homogeneous and isotropic, namely, $V(x)$ is constant and $b\equiv0$, and that the selection is of the form $h(x,u)=a(x)g(u)$, where $a(x)$ is the spatial factor and $g(u)$ is a function of gene frequency satisfying $(g_{*})$. The sign-indefinite weight term $a(x)$ reflects at least one change in the direction of selection and leads to several environmental regions in the habitat $\Omega$ which are favorable ($a(x)>0$), neutral ($a(x)=0$), or unfavorable ($a(x)<0$) for one allele. In this connection, investigations on non-constant positive stationary solutions (i.e.,~clines) lead to the study of the Neumann problem
\begin{equation}\label{eq-gen2}
\begin{cases}
\, d\Delta u+a(x)g(u)=0 &\text{in $\Omega$,}\\
\, 0\leq u\leq 1 &\text{in $\partial\Omega$,}\\
\, \partial_{\nu} u =0 &\text{on $\partial\Omega$,}
\end{cases}
\end{equation}
where $\Delta$ denotes the Laplace operator and $d>0$ is the diffusion rate. Neumann boundary conditions model an impenetrable barrier for the population so that no-flux of genes across the boundary occurs. The number and the stability of non-constant positive solutions of \eqref{eq-gen2} are governed by the features of both the components $a(x)$ and $g(u)$. 

The existence of a unique non-constant and globally asymptotically stable solution of \eqref{eq-gen2} is proved in~\cite{BrHe-90,He-81,LoNa-02} for sufficiently small $d$ provided that $\int_\Omega a(x)\,\mathrm{d}x < 0$ and $g(u)$ is a smooth function such that $g''(u)<0$ for every 
$u\in \mathopen{]}0,1\mathclose{[}$. The archetypical example is the case when no allele is dominant or the population is haploid, namely $g(u)=u(1-u)$ (e.g.,~\cite{Ha-48,Na-75}). On the other hand, if $g(u)$ is not concave, multiplicity results for \eqref{eq-gen2} are shown in \cite{LNS-10,So-17jomb}. In particular, if $g'(0)=0$ and we assume also that $\lim_{u\to 0^{+}}g(u)/u^{k}>0$ for some $k > 1$, then for $d$ sufficiently small there exist at least two non-constant solutions: one stable and the other unstable (cf.~\cite[Theorem~2.9]{LNS-10}). The main example in this framework concerns completely dominance of allele $A_{2}$ over allele $A_{1}$, namely $g(u)=u^{2}(1-u)$ (e.g.,~\cite{LNN-13,LNS-10}). 

In this paper, we deal with migration-selection models in a unidimensional habitat. We also assume that $V(x)$ and $b(x)$ are constant functions, with $b(x)=c$ for some $c\in\mathbb{R}$. Moreover, we describe the strength of selection in the environmental regions which are beneficial or harmful for the alleles by introducing two positive independent parameters, $\lambda$ and $\mu$, on which we discharge the migration rate. Precisely, the weight term we consider is defined as
\begin{equation}\label{eq-weight}
a_{\lambda,\mu}(x) :=\lambda a^{+}(x)-\mu a^{-}(x).
\end{equation}
Hence, the selection is $h(x,u)=a_{\lambda,\mu}(x)g(u)$ where $g(u)$ satisfies $(g_{*})$ and, in order to include recessive phenomena as a case study, we assume also condition $(g_{0})$. In such a way, we are lead to equation $(\mathscr{E}_{\lambda,\mu})$. We notice that, for $\lambda = \mu = 1/d$ and $c = 0$, this gives the one-dimensional version of the elliptic PDE in \eqref{eq-gen2}.

We are interested in periodically changes in genotype within a population as a function of spatial location. Thus we 
assume that $a(x)$ is $P$-periodic (for some $P > 0$) and we seek non-constant positive solutions of equation $(\mathscr{E}_{\lambda,\mu})$ (in the Carath\'{e}odory sense, see \cite[Section~I.5]{Ha-80}) satisfying \textit{periodic boundary conditions}
\begin{equation}
u(0)=u(P),\quad u'(0)=u'(P).
\end{equation} 
These models are appropriate in the case of populations living in circular habitats (e.g.,~around a lake or along the shore of an island), as well as for ring species, for instance, around the arctic.

To state our main results, we introduce the following condition on the weight function $a(x)$ that we assume henceforth:
\begin{itemize}
\item[$(a_{*})$]
\textit{there exist $m \geq 1$ non-empty closed intervals $I^{+}_{1},\ldots,I^{+}_{m}$
separated by $m$ non-empty closed intervals $I^{-}_{1},\ldots,I^{-}_{m}$ such that
\begin{equation*}
\bigcup_{i=1}^{m} I^{+}_{i} \, \cup \, \bigcup_{i=1}^{m} I^{-}_{i} = \mathopen{[}0,P\mathclose{]},
\end{equation*}
and
\begin{equation*}
a(x)\succ 0 \; \text{ on $I^{+}_{i}$}, \qquad a(x)\prec 0 \; \text{ on $I^{-}_{i}$}.
\end{equation*}}
\end{itemize}
In the above condition, the symbol $\succ$ (respectively, $\prec$) means that $a(x) \geq 0$ (respectively, $a(x) \leq 0$), with $a(x) \not\equiv 0$.
We also define
\begin{equation}\label{def-musharp}
\mu^{\#}(\lambda) := \lambda \, \dfrac{\int_{0}^{P} a^{+}(x)\,\mathrm{d}x}{\int_{0}^{P} a^{-}(x)\,\mathrm{d}x}
\end{equation}
and notice that $\int_{0}^{P} a_{\lambda,\mu}(x)\,\mathrm{d}x < 0$ if and only if $\mu > \mu^{\#}(\lambda)$. 

With this notation, our first result reads as follows.

\begin{theorem}\label{th-exis}
Let $c\in\mathbb{R}$ and let $a \colon \mathbb{R} \to \mathbb{R}$ be a $P$-periodic locally integrable function satisfying $(a_{*})$. Let $g \colon \mathopen{[}0,1\mathclose{]} \to \mathbb{R}$ be a continuously differentiable function satisfying $(g_{*})$ and $(g_{0})$.
Then, there exists $\lambda^{*} > 0$ such that for every $\lambda > \lambda^{*}$ and for every $\mu>\mu^{\#}(\lambda)$ equation $(\mathscr{E}_{\lambda,\mu})$ has at least two non-constant positive $P$-periodic solutions.

More precisely, fixed an arbitrary constant $\rho\in\mathopen{]}0,1\mathclose{[}$ there exists $\lambda^{*} = \lambda^{*}(\rho) > 0$ such that for every $\lambda > \lambda^{*}$ and for every $\mu>\mu^{\#}(\lambda)$ there exist two positive $P$-periodic solutions $u_{s}(x)$ and $u_{\ell}(x)$ to $(\mathscr{E}_{\lambda,\mu})$ such that
\begin{equation*}
0<\|u_{s}\|_{\infty}<\rho<\|u_{\ell}\|_{\infty}<1.
\end{equation*}
\end{theorem}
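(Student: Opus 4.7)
The plan is to apply Mawhin's coincidence degree theory to the periodic problem, written as the operator equation $Lu = Nu$ with $Lu := u'' + cu'$ a Fredholm map of index zero acting on the space of $P$-periodic Sobolev functions and $N(u)(x) := -a_{\lambda,\mu}(x)\tilde{g}(u(x))$, where $\tilde{g}\colon\mathbb{R}\to\mathbb{R}$ is the continuous extension of $g$ obtained by setting $\tilde{g}\equiv 0$ outside $[0,1]$ (continuous thanks to $g(0)=g(1)=0$). As a preliminary step I would establish an a priori localization: every $P$-periodic solution $u$ of $Lu=Nu$ takes values in $[0,1]$ and, unless identically $0$ or $1$, strictly in $(0,1)$. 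The argument is that on any open set where $u\ge 1$ or $u\le 0$ the equation reduces to $u'' + cu' = 0$, whose $P$-periodic solutions are constants; combined with the uniqueness of the Cauchy problem this confines non-constant solutions to $(0,1)$, where they become genuine solutions of $(\mathscr{E}_{\lambda,\mu})$.

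The heart of the proof would be the computation of two coincidence degrees on the nested open subsets
\begin{equation*}
\Omega_{1}:=\bigl\{u\in C_{P}:0<u(x)<1\text{ for all }x\bigr\},\qquad \Omega_{\rho}:=\bigl\{u\in\Omega_{1}:\|u\|_{\infty}<\rho\bigr\},
\end{equation*}
technically modified by a small forcing term so as to push $u\equiv 0$ off the boundary. For $\Omega_{1}$, I would consider the homotopy $u'' + cu' + a_{\lambda,\mu}(x)\tilde{g}(u)=\vartheta$ with $\vartheta\in[0,\vartheta^{*}]$; integrating over a period yields
\begin{equation*}
\vartheta\,P=\int_{0}^{P}a_{\lambda,\mu}(x)\,\tilde{g}(u(x))\,\mathrm{d}x,
\end{equation*}
whose right-hand side is uniformly bounded by $\max_{[0,1]} g\cdot(\lambda\|a^{+}\|_{L^{1}}+\mu\|a^{-}\|_{L^{1}})$. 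Taking $\vartheta^{*}$ larger than this bound rules out solutions at the endpoint, and no solution along the homotopy can touch $\{u=0\}$ or $\{u=1\}$ because at such a critical point the equation forces $\vartheta=0$; this would give $D_{L}(L-N,\Omega_{1})=0$. The average condition $\mu>\mu^{\#}(\lambda)$, equivalent to $\int_{0}^{P}a_{\lambda,\mu}(x)\,\mathrm{d}x<0$, enters here by ensuring compatibility of the homotopy with the sign structure of the weight. For $\Omega_{\rho}$ I would exploit $(g_{0})$: given $\varepsilon>0$ one has $\tilde{g}(u)\le\varepsilon u$ on $[0,\rho]$ once $\rho$ is small, and a homotopy from $N$ to a linear comparison operator, together with an eigenvalue-type lower bound involving $\lambda$ and the weight $a^{+}$ on each positivity interval $I_{i}^{+}$, would exclude the crossing $\|u\|_{\infty}=\rho$ provided $\lambda>\lambda^{*}(\rho)$ and give $D_{L}(L-N,\Omega_{\rho})=1$.

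Additivity of degree would then yield
\begin{equation*}
D_{L}\bigl(L-N,\Omega_{1}\setminus\overline{\Omega_{\rho}}\bigr)=0-1=-1\ne 0,
\end{equation*}
producing the large solution $u_{\ell}$ with $\rho<\|u_{\ell}\|_{\infty}<1$, while the nonzero degree on $\Omega_{\rho}$ would provide the small solution $u_{s}$ with $0<\|u_{s}\|_{\infty}<\rho$, distinct from $u\equiv 0$ thanks to the preliminary perturbation. Both solutions would be non-constant by the a priori localization. The main obstacle is the degree computation on $\Omega_{\rho}$: converting the pointwise smallness from $(g_{0})$ into a genuine no-crossing statement at $\|u\|_{\infty}=\rho$ along the homotopy requires a delicate linear comparison on each positivity interval $I_{i}^{+}$, and is what forces the dependence $\lambda^{*}=\lambda^{*}(\rho)$.
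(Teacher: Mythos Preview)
Your degree assignments are inverted, and this is not cosmetic. In the paper's scheme---and in what the hypotheses actually support---the ball $B_\rho$ carries degree $0$, while both a smaller ball $B_{r_0}$ and a larger ball $B_{R_0}$ carry degree $1$. The role of $\lambda>\lambda^{*}(\rho)$ is precisely to force degree $0$ on $B_\rho$: one adds a nonnegative forcing supported on $\bigcup_i I_i^+$ and uses that, for $\lambda$ large, no nonnegative solution of $u''+cu'+\lambda a^+(x)g(u)+\alpha=0$ on $I_i^+$ can have $\max_{I_i^+}u=\rho$ (Lemma~\ref{lem-rho}). The hypothesis $\mu>\mu^{\#}(\lambda)$, i.e.\ $\int_0^P a_{\lambda,\mu}<0$, enters instead in the degree-$1$ computations on $B_{r_0}$ and $B_{R_0}$, via the scaling homotopy $u''+cu'+\vartheta\,a_{\lambda,\mu}(x)g(u)=0$. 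Your attribution of $\mu>\mu^{\#}(\lambda)$ to the $\Omega_1$ step and of $\lambda>\lambda^{*}$ to the $\Omega_\rho$ step is therefore backwards.

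Your argument for $\Omega_1$ also fails at a more basic level: $u\equiv 1$ (and $u\equiv 0$) lies on $\partial\Omega_1$ and solves the equation at $\vartheta=0$, so the degree is undefined there. The claim that touching $\{u=0\}$ along the homotopy forces $\vartheta=0$ is false: at a minimum $x_0$ with $u(x_0)=0$ the equation gives $u''(x_0)=\vartheta$, consistent with any $\vartheta\ge 0$. The paper circumvents this by producing, via Lemma~\ref{lem-R0} (which uses the $\mathcal{C}^1$ regularity of $g$ near $1$ together with $\mu>\mu^{\#}$), a radius $R_0<1$ above which no nonnegative $P$-periodic solutions exist, and works on $B_{R_0}$. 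Symmetrically, your use of $(g_0)$ on $\Omega_\rho$ is off: $(g_0)$ is an asymptotic condition at $0$, not a bound on $[0,\rho]$ for a fixed $\rho$; what it actually yields (Lemma~\ref{lem-r0}, using also $g'(0)=0$) is a radius $r_0>0$ below which the only nonnegative periodic solution is $u\equiv 0$, giving degree $1$ on $B_{r_0}$. With three nested balls $B_{r_0}\subset B_\rho\subset B_{R_0}$ carrying degrees $1,0,1$, additivity places $u_s$ in $B_\rho\setminus\overline{B_{r_0}}$ and $u_\ell$ in $B_{R_0}\setminus\overline{B_\rho}$; a two-set scheme cannot separate $u_s$ from the trivial solution.
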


Let us notice that, when $\int_{0}^{P} a(x)\,\mathrm{d}x < 0$, an application of Theorem~\ref{th-exis} with $\mu = \lambda$ provides two non-constant positive $P$-periodic solutions of the one-parameter equation
\begin{equation}\label{eq-lambda}
u'' + cu' + \lambda a(x)g(u) = 0,
\end{equation}
for $\lambda > 0$ sufficiently large (see Corollary \ref{cor-exis}). When $c = 0$, this result can thus be interpreted as a periodic version of the two-solution theorem given in~\cite[Theorem~2.9]{LNS-10} for the Neumann boundary value problem (indeed, $\lambda = 1/d$ large implies $d$ small). It is remarkable, however, that the same result holds even in the non-Hamiltonian case $c \neq 0$.  

The second, and main, part of our investigation is focused on the appearance of \textit{high multiplicity} phenomena for solutions of $(\mathscr{E}_{\lambda,\mu})$. In this regard, the fact that the weight function $a_{\lambda,\mu}(x)$ defined in \eqref{eq-weight} depends on two parameters $\lambda$ and $\mu$ plays a crucial role: indeed, high multiplicity of periodic solutions will be proved to arise when $\lambda > \lambda^*$ is fixed (where $\lambda^*$ is the constant already given by Theorem~\ref{th-exis}) and $\mu$ is sufficiently large (typically, much larger than the constant $\mu^{\#}(\lambda)$ defined in \eqref{def-musharp}).

To state our result precisely, we introduce the condition 
\begin{equation*}
\limsup_{u\to 1^{-}} \dfrac{g(u)}{1-u} <+\infty
\leqno{(g_{1})}
\end{equation*}
and notice that it is satisfied whenever $g(u)$ is continuously differentiable in a left neighborhood of $u = 1$.
To complement Theorem~\ref{th-exis} we have the following result. We remark that an analogous result is also valid if Dirichlet or Neumann boundary conditions are considered (see Section~\ref{section-6.2}). 

\begin{theorem}\label{th-mult}
Let $c\in\mathbb{R}$ and let $a \colon \mathbb{R} \to \mathbb{R}$ be a $P$-periodic locally integrable function satisfying $(a_{*})$. Let $g \colon \mathopen{[}0,1\mathclose{]} \to \mathbb{R}$ be a continuous function satisfying $(g_{*})$, $(g_{0})$ and $(g_{1})$. 
Then, there exists $\lambda^{*} > 0$ such that for every $\lambda > \lambda^{*}$ there exists $\mu^{*}(\lambda) > 0$ such that
for every $\mu > \mu^{*}(\lambda)$ equation $(\mathscr{E}_{\lambda,\mu})$ has at least $3^{m}-1$ non-constant positive $P$-periodic solutions.

More precisely, fixed an arbitrary constant $\rho\in\mathopen{]}0,1\mathclose{[}$ there exists $\lambda^{*} = \lambda^{*}(\rho) > 0$ such that for every $\lambda > \lambda^{*}$ there exist two constants $r,R$ with $0 < r < \rho < R < 1$ and $\mu^{*}(\lambda) =\mu^{*}(\lambda,r,R)>0$ such that for every $\mu > \mu^{*}(\lambda)$ and for every finite string $\mathcal{S} = (\mathcal{S}_{1},\ldots,\mathcal{S}_{m}) \in \{0,1,2\}^{m}$, with $\mathcal{S} \neq (0,\ldots,0)$,
there exists at least one positive $P$-periodic solution $u_{\mathcal{S}}(x)$ of $(\mathscr{E}_{\lambda,\mu})$ such that
\begin{itemize}
\item $\max_{x \in I^{+}_{i}} u_{\mathcal{S}}(x) < r$, if $\mathcal{S}_{i} = 0$;
\item $r < \max_{x \in I^{+}_{i}} u_{\mathcal{S}}(x) < \rho$, if $\mathcal{S}_{i} = 1$;
\item $\rho < \max_{x \in I^{+}_{i}} u_{\mathcal{S}}(x) < R$, if $\mathcal{S}_{i} = 2$;
\end{itemize}
for every $i=1,\ldots,m$.
\end{theorem}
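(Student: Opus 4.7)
The plan is to apply coincidence degree theory in the space $C_{P}$ of continuous $P$-periodic functions, in order to extend the two-level argument underlying Theorem~\ref{th-exis} to a ``three-level'' multiplicity scheme indexed by strings in $\{0,1,2\}^{m}$. Fix the $\rho \in \mathopen{]}0,1\mathclose{[}$ and the threshold $\lambda^{*} = \lambda^{*}(\rho)$ given by Theorem~\ref{th-exis}. The superlinear condition $(g_{0})$ allows one to choose $r \in \mathopen{]}0,\rho\mathclose{[}$ so small that no positive $P$-periodic solution of $(\mathscr{E}_{\lambda,\mu})$ attains $\max_{I^{+}_{i}} u = r$ on any $I^{+}_{i}$; dually, condition $(g_{1})$ together with a barrier-function argument near $u=1$ allows one to choose $R \in \mathopen{]}\rho,1\mathclose{[}$ so close to $1$ that no such solution attains $\|u\|_{\infty} = R$.

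For each $\mathcal{S} \in \{0,1,2\}^{m}$, introduce the open set $\Omega_{\mathcal{S}} \subset C_{P}$ of functions $u$ with $0 < u(x) < R$ everywhere and with $\max_{I^{+}_{i}} u$ lying in $\mathopen{[}0,r\mathclose{[}$, $\mathopen{]}r,\rho\mathclose{[}$, or $\mathopen{]}\rho,R\mathclose{[}$ according to whether $\mathcal{S}_{i}$ equals $0$, $1$, or $2$. By the threshold exclusions these sets are pairwise disjoint and jointly capture all positive $P$-periodic solutions. Writing $(\mathscr{E}_{\lambda,\mu})$ as $Lu = N_{\lambda,\mu}(u)$, with $L \colon u \mapsto -u'' - cu'$ under periodic boundary conditions and $N_{\lambda,\mu}$ the superposition operator associated with $(\lambda a^{+} - \mu a^{-})g$, the strategy is to compute the Mawhin coincidence degree $D_{L}(L - N_{\lambda,\mu}, \Omega_{\mathcal{S}})$ and show it is nonzero for every $\mathcal{S} \neq (0,\ldots,0)$; the existence property of the degree will then yield the required $3^{m}-1$ solutions $u_{\mathcal{S}}$.

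The degree computation hinges on a decoupling estimate: for $\mu$ sufficiently large, any positive $P$-periodic solution is squeezed arbitrarily close to an (almost-)constant on each $I^{-}_{i}$, since any substantial oscillation there would be incompatible with the equation once $-\mu a^{-}(x)g(u)$ dominates. Consequently the behavior of $u$ on distinct $I^{+}_{i}$'s decouples asymptotically, and on each $I^{+}_{i}$ one recovers the local picture of Theorem~\ref{th-exis}: the ``zero'' branch ($\max \in \mathopen{[}0,r\mathclose{[}$) and the ``small'' branch ($\max \in \mathopen{]}r,\rho\mathclose{[}$) each contribute local degree $+1$, whereas the ``large'' branch ($\max \in \mathopen{]}\rho,R\mathclose{[}$) contributes $-1$. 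A multiplicativity argument across the $m$ independent intervals then gives $D_{L}(L - N_{\lambda,\mu}, \Omega_{\mathcal{S}}) = (-1)^{k(\mathcal{S})}$ with $k(\mathcal{S}) := \#\{i : \mathcal{S}_{i} = 2\}$, which is always nonzero.

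The main obstacle is the quantitative decoupling, coupled with the threshold-avoidance at the levels $r$, $\rho$, $R$. Arranging for a single choice $\mu^{*}(\lambda) = \mu^{*}(\lambda,r,R)$ to secure all of this simultaneously requires combining several ingredients: uniform $L^{\infty}$ bounds on $u'$ via energy estimates; a linearization argument at $u=0$ based on $(g_{0})$ to exclude $\max_{I^{+}_{i}} u = r$; a supersolution construction near $u=1$ based on $(g_{1})$ to exclude $\|u\|_{\infty} = R$; and a ``stretching'' estimate on each $I^{-}_{i}$ which quantifies, in terms of $r$ and $R$, how large $\mu$ must be for the profiles of $u$ on different $I^{+}_{i}$'s to evolve truly independently during the homotopies used in the degree computation.
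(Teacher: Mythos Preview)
Your proposal captures the broad architecture---partition the space by the value of $\max_{I^{+}_{i}} u$ relative to three thresholds $r<\rho<R$, compute a nonzero coincidence degree in each cell, and read off $3^{m}-1$ solutions---but several of the mechanisms you invoke are incorrect, and this matters for whether the argument can be completed.

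\medskip

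\textbf{Threshold exclusions.} You write that $(g_{0})$ alone lets you choose $r$ small enough that no positive $P$-periodic solution attains $\max_{I^{+}_{i}} u = r$, and that $(g_{1})$ with a barrier argument lets you choose $R$ close to $1$ so that no solution attains $\|u\|_{\infty}=R$. Neither of these is true independently of $\mu$. In the paper, the only threshold that is excluded purely by the choice of $\lambda>\lambda^{*}$ is the middle one, $\rho$ (Lemma~2.3). The exclusions at $r$ and at $R$ both require $\mu$ to be large: conditions $(g_{0})$ and $(g_{1})$ are used to obtain growth estimates (Lemmas~2.5 and~2.7) showing that if $\max_{I^{+}_{i}} u$ hits $r$ or $R$, then on the adjacent $I^{-}_{i}$ the term $\mu a^{-}(x)g(u)$ forces $u$ above $R$---a contradiction---\emph{provided} $\mu$ exceeds an explicit bound. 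So the role of $r$ small and $R$ close to $1$ is only to make those growth estimates available; it is $\mu$ large that actually closes the argument. Your ordering (first pick $r,R$ to exclude thresholds, then pick $\mu$ to decouple) cannot work as stated.

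\medskip

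\textbf{Degree computation.} Your heuristic that solutions are ``squeezed close to an almost-constant on each $I^{-}_{i}$'' and that the degree therefore factorizes multiplicatively across the $m$ intervals is not how the argument runs, and your sign attribution is inverted. The paper does not decouple; it computes the degree on the nested ``box'' sets $\Omega^{\mathcal{I},\mathcal{J}}$ (where only \emph{upper} bounds on $\max_{I^{+}_{i}}|u|$ are imposed) via two homotopy lemmas, obtaining $0$ when $\mathcal{I}\neq\emptyset$ and $1$ when $\mathcal{I}=\emptyset$, and then passes to the cells $\Lambda^{\mathcal{I},\mathcal{J}}$ by an inclusion--exclusion combinatorial identity. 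The result is $\mathrm{D}_{L}(L-N_{\lambda,\mu},\Lambda^{\mathcal{I},\mathcal{J}})=(-1)^{\#\mathcal{I}}$, where $\mathcal{I}=\{i:\mathcal{S}_{i}=1\}$. In other words it is the \emph{middle} (``small'') branch that carries the sign $-1$, not the ``large'' branch as you claim. A genuine product formula for the degree across intervals would require a decomposition of the operator that is not available here; the inclusion--exclusion route is what makes the computation go through.
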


Let us notice that the number of solutions provided by Theorem~\ref{th-mult} is strongly related with the nodal behavior of the weight function $a_{\lambda,\mu}(x)$: the larger the number of nodal domains of the weight function, $m$, the greater the number of solutions obtained, $3^m-1$. Observe also that the number $3^m-1$ comes from the possibility of ``coding'' the solutions via their behavior in each interval of positivity $I^{+}_{i}$: ``very small'' ($\mathcal{S}_{i} = 0$), ``small'' ($\mathcal{S}_{i} = 1$) or ``large'' ($\mathcal{S}_{i} = 2$).  We mention that the same type of multiplicity pattern also emerges in a different context, namely for equation $(\mathscr{E}_{\lambda,\mu})$ with $c = 0$ and a nonlinear term $g\colon \mathopen{[}0,+\infty\mathclose{[} \to \mathopen{[}0,+\infty\mathclose{[}$ satisfying $(g_{0})$ and having sublinear growth at infinity, that is, $g(u)/u \to 0$ for $u \to +\infty$ (see \cite{BoFeZa-18tams}).

The possibility of providing, in the context of indefinite boundary value problems, high multiplicity results by playing with the nodal behavior of the weight function was first suggested in \cite{GRLG-00}; therein, an interesting analogy was proposed with the papers \cite{Da-88,Da-90}, giving, in the PDE setting, multiplicity of solutions depending on the shape of the domain. 
Later on, along this line of research, several contributions followed \cite{BoGoHa-05,Bo-11,BoDaPa-17,BoFeZa-18tams,FeSo-18non,FeSo-18na,FeZa-15jde,FeZa-17jde,GaHaZa-03,GaHaZa-04}. 
In particular, dealing with equation $(\mathscr{E}_{\lambda,\mu})$, with $c= 0$ and $g(u)$ a Lipschitz continuous function satisfying $(g_{*})$ and $(g_{0})$, the existence of $8 = 3^{2} - 1$ positive solutions for both the Dirichlet and the Neumann boundary value problem was previously proved in \cite{FeSo-18non}, for a weight function $a(x)$ with $m=2$ intervals of positivity. Therefore, Theorem~\ref{th-mult} extends the result therein to the general case $m \geq 2$ and to a wider class of boundary conditions, including periodic ones, possibly in the non-Hamiltonian case $c \neq 0$. It is worth noticing that this was explicitly raised as an open problem in \cite[Conjecture 2]{FeSo-18non}; let us stress however that
the shooting arguments employed in \cite{FeSo-18non} by no means can be used to investigate the periodic problem, and in the present paper we rely on a completely different approach. 

Our last result concerns the dynamics of equation $(\mathscr{E}_{\lambda,\mu})$ on the whole real line.
Precisely, having defined the intervals
\begin{equation*}
I^{+}_{i,\ell} := I^{+}_{i} + \ell P, \quad i=1,\ldots,m, \; \ell\in\mathbb{Z},
\end{equation*}
we provide globally defined positive solutions of $(\mathscr{E}_{\lambda,\mu})$, whose behavior in each of the above intervals can be coded, as in Theorem~\ref{th-mult}, by a bi-infinite (possibly non-periodic) sequence $\mathcal{S} \in\{0,1,2\}^{\mathbb{Z}}$. This is
a picture of symbolic dynamics, and equation $(\mathscr{E}_{\lambda,\mu})$ is said to exhibit \textit{chaos}.
The precise statement is the following.

\begin{theorem}\label{th-chaos}
Let $c\in\mathbb{R}$ and let $a \colon \mathbb{R} \to \mathbb{R}$ be a locally integrable periodic function of minimal period
$P > 0$ satisfying $(a_{*})$. Let $g \colon \mathopen{[}0,1\mathclose{]} \to \mathbb{R}$ be a continuous function satisfying $(g_{*})$, $(g_{0})$ and $(g_{1})$.
Then, fixed an arbitrary constant $\rho\in\mathopen{]}0,1\mathclose{[}$ there exists $\lambda^{*} = \lambda^{*}(\rho) > 0$ such that
for every $\lambda > \lambda^{*}$ there exist two constants $r$ and $R$ with $0 < r < \rho < R < 1$, and $\mu^{*}(\lambda) =
\mu^{*}(\lambda,r,R)>0$ such that for every $\mu > \mu^{*}(\lambda)$ the following holds:
given any two-sided sequence
$\mathcal{S} = (\mathcal{S}_{j})_{j\in\mathbb{Z}}\in\{0,1,2\}^{\mathbb{Z}}$ which is not identically zero,
there exists at least one positive solution $u_{\mathcal{S}}(x)$ of $(\mathscr{E}_{\lambda,\mu})$ such that
\begin{itemize}
\item $\max_{x \in I^{+}_{i,\ell}} u_{\mathcal{S}}(x) < r$, if $\mathcal{S}_{i + \ell m} = 0$;
\item $r < \max_{x \in I^{+}_{i,\ell}} u_{\mathcal{S}}(x) < \rho$, if $\mathcal{S}_{i + \ell m} = 1$;
\item $\rho < \max_{x \in I^{+}_{i,\ell}} u_{\mathcal{S}}(x) < R$, if $\mathcal{S}_{i + \ell m} = 2$;\end{itemize}
for every $i=1,\ldots,m$ and $\ell\in\mathbb{Z}$.
In particular, if the sequence $\mathcal{S}$ is $km$-periodic for some integer $k\geq1$, there exists at least a positive $kP$-periodic solution $u_{\mathcal{S}}(x)$ of $(\mathscr{E}_{\lambda,\mu})$ satisfying the above properties.
\end{theorem}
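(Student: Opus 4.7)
My plan is to obtain each globally defined solution $u_{\mathcal{S}}$ as a locally uniform limit of periodic solutions furnished by Theorem~\ref{th-mult}. Given a bi-infinite code $\mathcal{S}=(\mathcal{S}_{j})_{j\in\mathbb{Z}}$ not identically zero, I would for each integer $k\geq 1$ truncate $\mathcal{S}$ to the block $\mathcal{S}^{(k)} = (\mathcal{S}_{-km+1},\ldots,\mathcal{S}_{km})\in\{0,1,2\}^{2km}$, which is nontrivial for all sufficiently large $k$. Viewing $a$ as $2kP$-periodic (so that one period now contains the $2km$ intervals $I^{+}_{i,\ell}$ with $i=1,\ldots,m$ and $\ell=-k,\ldots,k-1$), I would invoke Theorem~\ref{th-mult} to produce a $2kP$-periodic positive solution $u_{k}$ of $(\mathscr{E}_{\lambda,\mu})$ whose restriction realizes the coding prescribed by $\mathcal{S}^{(k)}$.

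The crucial point for this strategy is that the thresholds $\lambda^{*}(\rho)$, $r$, $R$ and $\mu^{*}(\lambda,r,R)$ furnished by Theorem~\ref{th-mult} must be \emph{independent of $k$}, so that $\lambda$ and $\mu$ can be fixed once and for all before letting $k\to +\infty$. I would therefore trace through the proof of Theorem~\ref{th-mult} and verify that its constants depend only on the behavior of $a$ on a single pair $I^{+}_{i}\cup I^{-}_{i}$ of consecutive intervals of the weight, and not on the number of such pairs contained in the fundamental period. This uniformity, which I expect to follow from the local and modular nature of the underlying coincidence-degree construction, is where the main technical effort should lie.

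Given the family $\{u_{k}\}$ with uniform bounds, the remaining steps are standard. Since $0\leq u_{k}\leq 1$ on $\mathbb{R}$ and the weight is locally integrable, integrating $(\mathscr{E}_{\lambda,\mu})$ yields a uniform bound on $\|u_{k}'\|_{L^{\infty}(K)}$ for every compact $K\subset\mathbb{R}$; Arzel\`{a}--Ascoli together with a diagonal extraction then furnishes a subsequence converging in $C^{1}_{\mathrm{loc}}(\mathbb{R})$ to a Carath\'{e}odory solution $u_{\mathcal{S}}$ of $(\mathscr{E}_{\lambda,\mu})$. The strict coding inequalities at each finite stage pass to the limit only as weak ones, and to recover strict inequalities in Theorem~\ref{th-chaos} I would apply Theorem~\ref{th-mult} at each stage with slightly sharpened thresholds, exploiting the openness of the admissible range for the constants in that theorem. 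Finally, when $\mathcal{S}$ is $km$-periodic no limit procedure is required: a direct application of Theorem~\ref{th-mult} to $a$ viewed as $kP$-periodic with code $(\mathcal{S}_{1},\ldots,\mathcal{S}_{km})$ already yields a $kP$-periodic solution realizing the prescribed coding.
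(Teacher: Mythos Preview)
Your overall strategy matches the paper's: approximate $\mathcal{S}$ by periodic truncations, apply Theorem~\ref{th-mult} on intervals of length $2kP$ (the paper uses $(2n+1)P$, a trivial variation), and extract a $C^{1}_{\mathrm{loc}}$ limit via Arzel\`a--Ascoli and a diagonal argument. The key technical point you identify---that $\lambda^{*}$, $r$, $R$, $\mu^{*}$ are independent of the length of the periodicity interval---is exactly what the paper invokes. One minor imprecision: the explicit constants in Section~\ref{section-4.1} involve triples $I^{-}_{i-1}\cup I^{+}_{i}\cup I^{-}_{i}$ and neighbouring pairs $I^{\pm}_{i}\cup I^{\mp}_{i\pm1}$, not single pairs; but since they are maxima over $i=1,\ldots,m$ of such local quantities, and the $kP$-periodic extension has the same local structure, they are indeed unchanged.

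There is, however, a genuine gap in your recovery of the strict inequalities. Applying Theorem~\ref{th-mult} with a single sharpened triple $(r',\rho',R')$ cannot work: from symbol $0$ you obtain $\max\tilde u\le r'$ in the limit, while from symbol $1$ you obtain $r'\le\max\tilde u\le\rho'$, so to conclude both $\max\tilde u<r$ (symbol $0$) and $r<\max\tilde u$ (symbol $1$) you would need $r'<r$ and $r'>r$ simultaneously. Varying the thresholds with $k$ does not help either, since different applications of Theorem~\ref{th-mult} produce a priori unrelated solutions. The paper resolves this differently: it observes that the estimates excluding $\max_{I^{+}_{i}}u\in\{r,\rho,R\}$---Lemma~\ref{lem-rho} for the value $\rho$, and the arguments ruling out $(h^{1}_{1})$ and $(h^{1}_{3})$ in Section~\ref{section-4.1} for $r$ and $R$---are purely local and do not use the periodic boundary condition. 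They therefore apply directly to the limit function $\tilde u$ (which solves $(\mathscr{E}_{\lambda,\mu})$ on each $I^{-}_{i-1,\ell}\cup I^{+}_{i,\ell}\cup I^{-}_{i,\ell}$ with values in $[0,R]$), immediately forbidding equality at each threshold. You should replace the threshold-sharpening idea with this direct argument on the limit.
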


For the proofs of Theorem~\ref{th-exis} and Theorem~\ref{th-mult}, we adopt a functional analytic approach based on topological degree theory in Banach spaces (cf.~\cite{FeZa-15jde} and the subsequent papers \cite{BoFeZa-16,BoFeZa-18tams,FeZa-17jde}). In particular, we follow the general strategies developed in \cite{BoFeZa-16,BoFeZa-18tams}, dealing with a nonlinear term $g \colon \mathopen{[}0,+\infty\mathclose{[} \to \mathopen{[}0,+\infty\mathclose{[}$ satisfying $(g_{0})$ and having sublinear growth at infinity. As already mentioned, these (super-sublinear) nonlinearities have similar features with respect to logistic-type nonlinearities considered in the present paper. However, while in the former case it is often possible to develop dual arguments for small/large solutions, here the presence of the constant solution $u \equiv 1$
leads to an ``asymmetric'' situation which requires completely new arguments. An important feature of this method of proof is that the estimates leading to the constant $\lambda^{*}$ and $\mu^{*}(\lambda)$ are fully explicit, depending only on the local behavior of the weight function $a(x)$ but not on the length of the periodicity interval. As a consequence, one can prove Theorem~\ref{th-chaos} via an approximation argument.

\smallskip

The paper is structured as follows. In Section~\ref{section-2}, we describe the abstract degree setting and we prove some technical estimates
on the solutions of $(\mathscr{E}_{\lambda,\mu})$ (and of some related equations). Based on this, in Section~\ref{section-3} and Section~\ref{section-4}, we give the proofs of Theorem~\ref{th-exis} and Theorem~\ref{th-mult}, respectively. The proof of Theorem~\ref{th-chaos} is then 
presented, together with some comments about the existence of subharmonic solutions, in Section~\ref{section-5}. The paper ends
with Section~\ref{section-6}, discussing some related results: subharmonic solutions via the Poincar\'e--Birkhoff theorem, Dirichlet/Neumann boundary value problems, stability issues, and an asymptotic analysis of the solutions for $\mu \to +\infty$.

\section{Abstract degree setting and technical lemmas}\label{section-2}

The aim of this section is to present the main tools used in the proofs of our theorems as well as some preliminary technical lemmas.

Before doing this, we introduce the following notation employed throughout the paper:
\begin{equation}\label{Ipm}
I^{+}_{i}=\mathopen{[}\sigma_{i},\tau_{i}\mathclose{]}, \qquad I^{-}_{i}=\mathopen{[}\tau_{i},\sigma_{i+1}\mathclose{]}, \qquad i=1,\ldots,m,
\end{equation}
where $\sigma_{i}$ and $\tau_{i}$ are suitable points such that
\begin{equation*}
0 = \sigma_{1} < \tau_{1} < \sigma_{2} < \tau_{2} < \ldots < \tau_{m-1} < \sigma_{m} < \tau_{m} < \sigma_{m+1} = P.
\end{equation*}
Notice that, due to the $P$-periodicity, we have assumed without loss of generality that $0 \in I^{+}_{1}$ (and, thus, $P \in I^{-}_{m}$).
We also stress that, in dealing with the above intervals, a cyclic convention will be adopted. For example, we will freely write expressions like $I^{-}_{i-1}\cup I^{+}_{i} \cup I^{-}_{i}$, where, if $i=1$, we agree that the interval $I^{-}_{0}$ means the $P$-shifted interval $I^{-}_{m}-P$. A similar remark applies for instance for $I^{+}_{i}\cup I^{-}_{i}\cup I^{+}_{i+1}$ when $i=m$ and, in such a case, $I^{+}_{m+1}=I^{+}_{1}+P$. This is not restrictive since the weight function $a(x)$ is $P$-periodic.

\subsection{Coincidence degree framework}\label{section-2.1}

In this section we recall Mawhin's coincidence degree theory (cf.~\cite{GaMa-77,Ma-79,Ma-93}) and we present two lemmas for the computation of the degree (cf.~\cite{BoFeZa-18tams}).

First of all, we remark that solving the $P$-periodic problem associated with $(\mathscr{E}_{\lambda,\mu})$ is equivalent to looking for solutions $u(x)$ of $(\mathscr{E}_{\lambda,\mu})$ defined on $\mathopen{[}0,P\mathclose{]}$ and such that $u(0) = u(P)$ and $u'(0) = u'(P)$.
Accordingly, let $X:=\mathcal{C}(\mathopen{[}0,P\mathclose{]})$ be the Banach space of continuous functions $u \colon \mathopen{[}0,P\mathclose{]} \to \mathbb{R}$,
endowed with the $\sup$-norm $\|u\|_{\infty} := \max_{x\in \mathopen{[}0,P\mathclose{]}} |u(x)|$,
and let $Z:=L^{1}(0,P)$ be the Banach space of integrable functions $v \colon \mathopen{[}0,P\mathclose{]} \to \mathbb{R}$,
endowed with the $L^{1}$-norm $\|v\|_{L^{1}(0,P)}:= \int_{0}^{P} |v(x)|\,\mathrm{d}x$.
We define the linear Fredholm map of index zero $L(u):= - u''-cu'$ on $\mathrm{dom}\,L := \bigl{\{}u\in W^{2,1}(0,P) \colon u(0) = u(P), \; u'(0) = u'(P) \bigr{\}} \subseteq X$. 
We also introduce the $L^{1}$-Carath\'{e}odory function
\begin{equation*}
f_{\lambda,\mu}(x,u) :=
\begin{cases}
\, -u, & \text{if $u \leq 0$,}\\
\, a_{\lambda,\mu}(x) g(u), & \text{if $u\in\mathopen{[}0,1\mathclose{]}$},\\
\, 0, & \text{if $u\geq 1$},\\
\end{cases}
\end{equation*}
and we denote by $N_{\lambda,\mu} \colon X \to Z$ the Nemytskii operator induced by the function $f_{\lambda,\mu}$, namely
\begin{equation*}
(N_{\lambda,\mu} u)(x):= f_{\lambda,\mu}(x,u(x)), \quad x\in \mathopen{[}0,P\mathclose{]}.
\end{equation*}

The coincidence degree theory ensures that the $P$-periodic problem associated with
\begin{equation}\label{eq-flm}
u'' + cu' +  f_{\lambda,\mu}(x,u) = 0
\end{equation} 
is equivalent to the coincidence equation
\begin{equation*}
Lu = N_{\lambda,\mu}u,\quad u\in \text{\rm dom}\,L,
\end{equation*}
or to the fixed point problem 
\begin{equation*}
u = \Phi_{\lambda,\mu}u:= \Pi u + QN_{\lambda,\mu}u + K_{\Pi}(\mathrm{Id}-Q)N_{\lambda,\mu}u, \quad u \in X,
\end{equation*}
where $\Pi \colon X \to \ker L \cong {\mathbb{R}}$, $Q \colon Z \to \text{\rm coker}\,L \cong Z/\mathrm{Im}\,L \cong \mathbb{R}$ are two projections, and $K_{\Pi} \colon \text{\rm Im}\,L \to \text{\rm dom}\,L \cap \ker \Pi$ is the right inverse of $L$ (cf.~\cite{GaMa-77,Ma-79,Ma-93}).

In this framework, if $\Omega\subseteq X$ is an open and bounded set such that
\begin{equation*}
Lu \neq N_{\lambda,\mu}u, \quad \text{for all $u\in \partial{\Omega}\cap \text{\rm dom}\,L$,}
\end{equation*}
the \textit{coincidence degree} $\mathrm{D}_{L}(L-N_{\lambda,\mu},\Omega)$ \textit{of $L$ and $N_{\lambda,\mu}$ in $\Omega$} is defined as
\begin{equation*}
\mathrm{D}_{L}(L-N_{\lambda,\mu},\Omega):= \text{\rm deg}_{LS}(\mathrm{Id} - \Phi_{\lambda,\mu},\Omega,0)
\end{equation*}
and it satisfies the standard properties of the topological degree, such as additivity, excision, homotopic invariance.

Our goal is to construct open and bounded sets $\Lambda\subseteq X$ such that $\mathrm{D}_{L}(L-N_{\lambda,\mu},\Lambda)\neq0$. By the existence property of the degree, this implies that there exists $u\in \Lambda\cap \text{\rm dom}\,L$ such that $Lu = N_{\lambda,\mu}u$. Therefore, $u(x)$ is a $P$-periodic solution of~\eqref{eq-flm}. To obtain a $P$-periodic solution of~$(\mathscr{E}_{\lambda,\mu})$, we further need to have
\begin{equation*}
0\leq u(x) \leq 1, \quad \text{for all $x\in\mathopen{[}0,P\mathclose{]}$.}
\end{equation*}
The first inequality follows from a simple convexity argument (the so-called maximum principle). Indeed, if $x_{0}\in\mathopen{[}0,P\mathclose{]}$ is such that $u(x_{0})=\min_{x\in\mathopen{[}0,P\mathclose{]}} u(x) <0$, then from equation \eqref{eq-flm} we obtain $u''(x)<0$ for a.e.~$x$ in a neighborhood of $x_{0}$, a contradiction.
 As for the second inequality, it will be a consequence of the construction of $\Lambda$, indeed we will take $\Lambda\subseteq \{u\in X \colon \|u\|_{\infty}<1\}$, so that $u(x)<1$ for all $x\in\mathopen{[}0,P\mathclose{]}$ (incidentally, notice that this prevents $u(x)$ to be the constant solution $u\equiv1$).

\medskip

To construct the sets $\Lambda$ as above, we need to introduce some auxiliary sets where we will compute the degree.
Given three constants $r,\rho,R$ with $0 < r < \rho < R < 1$, for any pair of
subsets of indices $\mathcal{I},\mathcal{J} \subseteq \{1,\ldots,m\}$ (possibly empty) with
$\mathcal{I} \cap \mathcal{J} = \emptyset$, we define the open and bounded set
\begin{equation*}
\Omega^{\mathcal{I},\mathcal{J}}_{(r,\rho,R)} :=
\left\{ u \in X \colon \|u\|_\infty<1,
\begin{array}{l}
\max_{I^{+}_{i}}|u|<r, \; i\in\{1,\ldots,m\}\setminus(\mathcal{I}\cup\mathcal{J})
\\
\max_{I^{+}_{i}}|u|<\rho, \; i\in\mathcal{I}
\\
\max_{I^{+}_{i}}|u|<R, \; i\in\mathcal{J}
\end{array} \right\}.
\end{equation*}
With this notation, the following lemmas hold. 

\begin{lemma}\label{lem-deg0}
Let $c\in\mathbb{R}$ and let $a \colon \mathbb{R} \to \mathbb{R}$ be a $P$-periodic locally integrable function satisfying $(a_{*})$. Let $g\colon\mathopen{[}0,1\mathclose{]}\to\mathbb{R}$ be a continuous function satisfying $(g_*)$. 
Let $\mathcal{I} \neq \emptyset$ and $\lambda, \mu >0$.
Assume that there exists $v \in L^{1}(0,P)$, with $v(x) \succ 0$ on $\mathopen{[}0,P\mathclose{]}$
and $v \equiv 0$ on $\bigcup_{i} I^{-}_{i}$, such that the following properties hold.
\begin{itemize}
\item[$(H_{1})$]
If $\alpha \geq 0$, then any $P$-periodic solution $u(x)$ of
\begin{equation}\label{eq-lem-deg0}
u'' + cu' + a_{\lambda,\mu}(x) g(u) + \alpha v(x) = 0,
\end{equation}
with $0 \leq u(x) \leq R$ for all $x\in \mathopen{[}0,P\mathclose{]}$,
satisfies
\begin{itemize}
\item[$\bullet$]
$\max_{x\in I^{+}_{i}} u(x)\neq r$, if $i \notin \mathcal{I} \cup \mathcal{J}$;
\item[$\bullet$]
$\max_{x\in I^{+}_{i}} u(x) \neq \rho$, if $i \in \mathcal{I}$;
\item[$\bullet$]
$\max_{x\in I^{+}_{i}} u(x) \neq R$, if $i \in \mathcal{J}$.
\end{itemize}
\item[$(H_{2})$]
There exists $\alpha_{0} \geq 0$ such that equation \eqref{eq-lem-deg0}, with $\alpha=\alpha_{0}$, does not possess any
non-negative $P$-periodic solution $u(x)$ with $u(x) \leq \rho$, for all $x \in \bigcup_{i \in \mathcal{I}} I^{+}_{i}$.
\end{itemize}
Then, it holds that $\mathrm{D}_{L}\bigl{(}L-N_{\lambda,\mu},\Omega^{\mathcal{I},\mathcal{J}}_{(r,\rho,R)}\bigr{)} = 0$.
\end{lemma}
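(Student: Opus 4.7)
The plan is to apply the homotopy invariance of the coincidence degree along the one-parameter family $\alpha \in [0,\alpha_{0}]$ and then collapse the degree to zero at $\alpha = \alpha_{0}$ via hypothesis $(H_{2})$. Concretely, I would consider the family of coincidence equations
\begin{equation*}
Lu = N_{\lambda,\mu} u + \alpha \widetilde{v}, \qquad \alpha \in [0,\alpha_{0}],
\end{equation*}
where $\widetilde{v}\in Z$ stands for the $L^{1}$-function $v$. The map $(u,\alpha)\mapsto N_{\lambda,\mu}u + \alpha\widetilde{v}$ is $L$-completely continuous on $\overline{\Omega}^{\mathcal{I},\mathcal{J}}_{(r,\rho,R)}\times[0,\alpha_{0}]$, so the only thing to check for admissibility is that no $P$-periodic solution $u$ of the homotopy equation lies on $\partial \Omega^{\mathcal{I},\mathcal{J}}_{(r,\rho,R)}$.

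To that end, I would establish two a priori bounds on any $u \in \overline{\Omega}^{\mathcal{I},\mathcal{J}}_{(r,\rho,R)}$ that solves $u'' + cu' + f_{\lambda,\mu}(x,u) + \alpha v(x) = 0$. The lower bound $u \geq 0$ follows by the same maximum-principle argument recalled after \eqref{eq-flm}: at an interior minimum $x_{0}$ with $u(x_{0})<0$ one would have $u'(x_{0})=0$ but $u''(x_{0}) = -f_{\lambda,\mu}(x_{0},u(x_{0})) - \alpha v(x_{0}) = u(x_{0}) - \alpha v(x_{0}) < 0$, a contradiction (the perturbation $\alpha v \geq 0$ does no harm). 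For the upper bound on each $I^{-}_{i}$, I would multiply the equation by $e^{cx}$ to get
\begin{equation*}
\bigl{(}e^{cx}u'(x)\bigr{)}' = -e^{cx}\bigl{[}f_{\lambda,\mu}(x,u(x)) + \alpha v(x)\bigr{]}.
\end{equation*}
On $I^{-}_{i}$ one has $v\equiv 0$ by hypothesis, while $f_{\lambda,\mu}(x,u)\leq 0$ for $u\geq 0$ (either $a_{\lambda,\mu}(x)g(u)\leq 0$ when $u\in\mathopen{[}0,1\mathclose{]}$, or $f_{\lambda,\mu}=0$ when $u\geq 1$). Hence $e^{cx}u'$ is non-decreasing on $I^{-}_{i}$, which forces $\max_{I^{-}_{i}} u = \max\{u(\tau_{i}), u(\sigma_{i+1})\}$; both endpoints lie in adjacent $I^{+}$-intervals where the constraints defining $\overline{\Omega}^{\mathcal{I},\mathcal{J}}_{(r,\rho,R)}$ bound $u$ by $R$. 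Chaining these local bounds via the cyclic convention yields $0 \leq u(x) \leq R$ on all of $[0,P]$, and in particular $\|u\|_{\infty} \leq R < 1$.

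With these bounds in hand, no solution of the homotopy can lie on $\partial\Omega^{\mathcal{I},\mathcal{J}}_{(r,\rho,R)}$: the potential boundary condition $\|u\|_{\infty}=1$ is excluded by $\|u\|_{\infty}\leq R<1$, while each of the possibilities $\max_{I^{+}_{i}}u\in\{r,\rho,R\}$ (for $i\notin\mathcal{I}\cup\mathcal{J}$, $i\in\mathcal{I}$, or $i\in\mathcal{J}$, respectively) is directly ruled out by $(H_{1})$ applied to the now a priori bounded solution. Homotopy invariance then gives
\begin{equation*}
\mathrm{D}_{L}\bigl{(}L - N_{\lambda,\mu}, \Omega^{\mathcal{I},\mathcal{J}}_{(r,\rho,R)}\bigr{)} = \mathrm{D}_{L}\bigl{(}L - N_{\lambda,\mu} - \alpha_{0}\widetilde{v}, \Omega^{\mathcal{I},\mathcal{J}}_{(r,\rho,R)}\bigr{)}.
\end{equation*}
Finally, any $u \in \Omega^{\mathcal{I},\mathcal{J}}_{(r,\rho,R)}$ solving the $\alpha=\alpha_{0}$ equation would be non-negative and would satisfy $u(x) \leq \max_{I^{+}_{i}} |u| < \rho$ on each $I^{+}_{i}$ with $i\in\mathcal{I}$; since $\mathcal{I}\neq\emptyset$, this contradicts $(H_{2})$. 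The existence property of $\mathrm{D}_{L}$ forces the right-hand side to vanish, and the lemma follows.

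The main technical obstacle I expect is the uniform upper bound on each $I^{-}_{i}$: one has to combine the weighted-monotonicity identity for $e^{cx}u'$ with the endpoint bounds inherited from the adjacent $I^{+}$-intervals, while simultaneously handling the truncation built into $f_{\lambda,\mu}$ and the cyclic convention for the indices (for the boundary intervals $I^{-}_{m}$ wrapping around the periodicity interval). Once this a priori bound is in place, the remainder is a standard application of homotopy invariance for Mawhin's coincidence degree.
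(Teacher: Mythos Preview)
Your proposal is correct and follows essentially the same approach as the paper: the paper does not give a self-contained proof but refers to \cite[Lemma~3.1]{BoFeZa-18tams}, highlighting exactly the two key ingredients you identify, namely the maximum principle for non-negativity and the endpoint-maximum property~\eqref{eq-flower} on the intervals $I^{-}_{i}$ (obtained via the monotonicity of $e^{cx}u'$), after which the homotopy in $\alpha$ and hypothesis $(H_{2})$ yield degree zero. Your treatment of the truncated nonlinearity $f_{\lambda,\mu}$ and the cyclic convention is also in line with what the paper indicates.
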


\begin{lemma}\label{lem-deg1}
Let $c\in\mathbb{R}$ and let $a \colon \mathbb{R} \to \mathbb{R}$ be a $P$-periodic locally integrable function satisfying $(a_{*})$. Let $g\colon\mathopen{[}0,1\mathclose{]}\to\mathbb{R}$ be a continuous function satisfying $(g_*)$. Let $\lambda>0$ and $\mu > \mu^{\#}(\lambda)$. Assume the following property.
\begin{itemize}
\item[$(H_{3})$]
If $\vartheta\in \mathopen{]}0,1\mathclose{]}$, then any $P$-periodic solution $u(x)$ of
\begin{equation}\label{eq-lem-deg1}
u'' + cu' + \vartheta a_{\lambda,\mu}(x) g(u) = 0,
\end{equation}
with $0 \leq u(x) \leq R$ for all $x\in \mathopen{[}0,P\mathclose{]}$, satisfies
\begin{itemize}
\item[$\bullet$]
$\max_{x\in I^{+}_{i}} u(x) \neq r$, if $i \notin \mathcal{J}$;
\item[$\bullet$]
$\max_{x\in I^{+}_{i}} u(x) \neq R$, if $i \in \mathcal{J}$.
\end{itemize}
\end{itemize}
Then, it holds that $\mathrm{D}_{L}\bigl{(}L-N_{\lambda,\mu},\Omega^{\emptyset,\mathcal{J}}_{(r,\rho,R)} \bigr{)} = 1$.
\end{lemma}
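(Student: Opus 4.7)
The plan is to compute $\mathrm{D}_{L}(L-N_{\lambda,\mu},\Omega^{\emptyset,\mathcal{J}}_{(r,\rho,R)})$ via Mawhin's continuation theorem: hypothesis $(H_{3})$ will serve as the admissible homotopy $Lu=\vartheta N_{\lambda,\mu}u$, $\vartheta\in\mathopen{]}0,1\mathclose{]}$, and the computation will then reduce to a Brouwer degree on $\ker L\cong\mathbb{R}$, the line of constant functions.

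The first step is to verify admissibility on $\partial\Omega^{\emptyset,\mathcal{J}}_{(r,\rho,R)}$. Any $P$-periodic solution $u$ of \eqref{eq-lem-deg1} lying in the closure of $\Omega^{\emptyset,\mathcal{J}}_{(r,\rho,R)}$ satisfies $u\geq 0$ on $\mathopen{[}0,P\mathclose{]}$ by the maximum principle recalled in Section~\ref{section-2.1}, and the defining constraints enforce $u\leq R$ on each $I^{+}_{i}$. Extending this bound to $I^{-}_{i}$ is the crucial preparatory step: on $I^{-}_{i}$ one has $a_{\lambda,\mu}\leq 0$ and $g(u)\geq 0$, so $(e^{cx}u')'=-\vartheta e^{cx}a_{\lambda,\mu}(x)g(u)\geq 0$, hence $e^{cx}u'$ is non-decreasing and $u$ admits no interior strict local maximum on $I^{-}_{i}$. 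Consequently $\max_{I^{-}_{i}}u$ is attained at $\tau_{i}$ or $\sigma_{i+1}$, both of which belong to some $I^{+}_{j}$, yielding $u\leq R$ throughout $\mathopen{[}0,P\mathclose{]}$. Hypothesis $(H_{3})$ then excludes $\max_{I^{+}_{i}}u=r$ for $i\notin\mathcal{J}$ and $\max_{I^{+}_{i}}u=R$ for $i\in\mathcal{J}$, so $u\notin\partial\Omega^{\emptyset,\mathcal{J}}_{(r,\rho,R)}$ for every $\vartheta\in\mathopen{]}0,1\mathclose{]}$.

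Next I would reduce the coincidence degree to a finite-dimensional Brouwer degree via Mawhin's reduction formula (cf.~the references cited in Section~\ref{section-2.1}). Identifying $\ker L$ with $\mathbb{R}$ via constants and writing $Qv=\tfrac{1}{P}\int_{0}^{P}v(x)\,\mathrm{d}x$, the intersection $\Omega^{\emptyset,\mathcal{J}}_{(r,\rho,R)}\cap\ker L$ is a symmetric open interval $\mathopen{]}{-s},s\mathclose{[}$ with $s=r$ if $\mathcal{J}\neq\{1,\ldots,m\}$ and $s=R$ otherwise. The specific extension defining $f_{\lambda,\mu}$ (namely $f_{\lambda,\mu}(x,u)=-u$ for $u\leq 0$) combined with $\mu>\mu^{\#}(\lambda)$ induces a clean sign change in $QN_{\lambda,\mu}$ across $c=0$: for $c\in\mathopen{]}{-s},0\mathclose{[}$, $QN_{\lambda,\mu}(c)=-c>0$; for $c\in\mathopen{]}0,s\mathclose{[}$, $QN_{\lambda,\mu}(c)=g(c)\cdot\tfrac{1}{P}\int_{0}^{P}a_{\lambda,\mu}(x)\,\mathrm{d}x<0$, as $\mu>\mu^{\#}(\lambda)$ is equivalent to $\int_{0}^{P}a_{\lambda,\mu}<0$ by \eqref{def-musharp}. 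In particular $QN_{\lambda,\mu}(\pm s)\neq 0$, so the kernel-side admissibility condition is satisfied, and the elementary one-dimensional Brouwer degree of the reduced map at the origin equals $+1$, whence $\mathrm{D}_{L}(L-N_{\lambda,\mu},\Omega^{\emptyset,\mathcal{J}}_{(r,\rho,R)})=1$.

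The main technical point I anticipate is precisely the a priori bound $u\leq R$ across the whole period: $(H_{3})$ is formulated under the hypothesis $0\leq u\leq R$ on $\mathopen{[}0,P\mathclose{]}$, while $\Omega^{\emptyset,\mathcal{J}}_{(r,\rho,R)}$ only directly controls $u$ on the positivity intervals $I^{+}_{i}$. The convexity-type argument on each $I^{-}_{i}$, based on the sign of the weight there, is what bridges this gap; once it is in place, the rest of the proof is a routine application of Mawhin's reduction formula together with the one-dimensional sign computation just described.
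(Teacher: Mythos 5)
Your proposal is correct and follows essentially the same route as the paper's (which defers to \cite[Lemma~3.2]{BoFeZa-18tams}): Mawhin's continuation theorem along the homotopy $Lu=\vartheta N_{\lambda,\mu}u$, with the boundary admissibility secured by the maximum principle, the property \eqref{eq-flower} on the negativity intervals, and $(H_{3})$, followed by the reduction to the one-dimensional Brouwer degree on $\ker L$, where the truncation $f_{\lambda,\mu}(x,u)=-u$ for $u\leq 0$ and the condition $\mu>\mu^{\#}(\lambda)$ give the sign change yielding degree $1$. You correctly identify the one genuinely delicate point, namely propagating the bound $u\leq R$ from $\bigcup_i I^{+}_{i}$ to all of $\mathopen{[}0,P\mathclose{]}$ (after first establishing $u\geq 0$); only a cosmetic remark: avoid reusing the letter $c$ for both the damping coefficient and the constants in $\ker L$.
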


The proofs of Lemma~\ref{lem-deg0} and Lemma~\ref{lem-deg1} follow the argument of the ones of~\cite[Lemma~3.1]{BoFeZa-18tams} and \cite[Lemma~3.2]{BoFeZa-18tams}, respectively (even with some simplifications, due to the fact that the sets considered in the present paper are bounded, differently from the case treated in \cite{BoFeZa-18tams}). We point out that in \cite{BoFeZa-18tams} only the case $c = 0$ was treated; however,
the presence of the term $cu'$ does not cause any additional difficulties, after having observed that the following property holds.
\begin{quote}
\textit{If $u(x)$ is a non-negative solution of either~\eqref{eq-lem-deg0} or~\eqref{eq-lem-deg1} then 
\begin{equation}\label{eq-flower}
\max_{x\in I^{-}_{i}}u(x)=\max_{x\in\partial I^{-}_{i}}u(x).
\end{equation}}
\end{quote}
When $c=0$, the above property follows straightforwardly from a convexity argument. Instead, in the present setting it can be obtained by writing equations~\eqref{eq-lem-deg0} and~\eqref{eq-lem-deg1} in the form $(e^{cx}u')'+ e^{cx} (a_{\lambda,\mu}(x) g(u)+\alpha v(x))=0$ and $(e^{cx}u')'+\vartheta e^{cx}  a_{\lambda,\mu}(x) g(u)=0$ and then arguing as in \cite[Remark~3.4]{FeZa-17jde}.

We notice that, for $d\in\mathopen{]}0,1\mathclose{[}$, by taking either $\mathcal{I}=\{1,\dots,m\}$ and $\mathcal{J}=\emptyset$ in Lemma~\ref{lem-deg0} or $\mathcal{I}=\mathcal{J}=\emptyset$ in Lemma~\ref{lem-deg1}, we can evaluate the degree on the sets of the following type 
\begin{equation*}
\big\{ u \in X \colon \|u\|_\infty<1,\, \max\nolimits_{I^{+}_{i}}|u|<d, \; i\in\{1,\ldots,m\}\big\}.
\end{equation*}
An application of property~\eqref{eq-flower} together with the excision property of the degree allows us to compute the degree on the open ball $B_{d}\subseteq X$ of center zero and radius $d>0$. More precisely, the following corollaries can be proved.

\begin{corollary}\label{cor-deg0}
Let $c\in\mathbb{R}$ and let $a \colon \mathbb{R} \to \mathbb{R}$ be a $P$-periodic locally integrable function satisfying $(a_{*})$. Let $g\colon\mathopen{[}0,1\mathclose{]}\to\mathbb{R}$ be a continuous function satisfying $(g_*)$. 
Let $\mathcal{I} \neq \emptyset$ and $\lambda, \mu >0$. Let $d\in \mathopen{]}0,1\mathclose{[}$ and assume that there exists $v \in L^{1}(0,P)$, with $v(x) \succ 0$ on $\mathopen{[}0,P\mathclose{]}$
and $v \equiv 0$ on $\bigcup_{i} I^{-}_{i}$, such that the following properties hold.
\begin{itemize}
\item[$(\widetilde{H}_{1})$]
If $\alpha \geq 0$, then any non-negative $P$-periodic solution $u(x)$ of
\eqref{eq-lem-deg0} satisfies $\|u\|_\infty\not=d$.
\item[$(\widetilde{H}_{2})$]
There exists $\alpha_{0} \geq 0$ such that equation \eqref{eq-lem-deg0}, with $\alpha=\alpha_{0}$, does not possess any
non-negative $P$-periodic solution $u(x)$ with $\|u\|_\infty\leq d$.
\end{itemize}
Then, it holds that $\mathrm{D}_{L}\bigl{(}L-N_{\lambda,\mu},B_d \bigr{)}= 0.$
\end{corollary}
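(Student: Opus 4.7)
My plan, following the hint given in the paragraph preceding the statement, is to apply Lemma~\ref{lem-deg0} to an auxiliary open set of the form $\Omega=\Omega^{\{1,\ldots,m\},\emptyset}_{(r,d,R)}$, for some $r,R$ with $0<r<d<R<1$, and then transfer the degree computation from $\Omega$ to $B_{d}$ by an excision argument based on the flower identity~\eqref{eq-flower}. Unpacking the definition, with $\mathcal{I}=\{1,\ldots,m\}$ and $\mathcal{J}=\emptyset$ the set $\Omega$ is nothing but
\[
\Omega=\bigl\{u\in X:\|u\|_{\infty}<1,\ \max_{I^{+}_{i}}|u|<d \text{ for every } i=1,\ldots,m\bigr\},
\]
and $B_{d}\subseteq\Omega$ trivially, since $d<1$.

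For the excision step I would argue that no $P$-periodic solution of $Lu=N_{\lambda,\mu}u$ sits in $\Omega\setminus\overline{B_{d}}$. Indeed, the convexity (maximum-principle) argument recalled in Section~\ref{section-2.1} forces every such solution $u$ to be non-negative, and the flower identity~\eqref{eq-flower}, applied to each interval $I^{-}_{i}$ where $a_{\lambda,\mu}(x)g(u)\leq 0$, yields $\|u\|_{\infty}=\max_{i}\max_{I^{+}_{i}}u$. Hence any solution lying in $\Omega$ automatically satisfies $\|u\|_{\infty}<d$, and by the excision property of the coincidence degree,
\[
\mathrm{D}_{L}(L-N_{\lambda,\mu},\Omega)=\mathrm{D}_{L}(L-N_{\lambda,\mu},B_{d}).
\]

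It then remains to show that $\mathrm{D}_{L}(L-N_{\lambda,\mu},\Omega)=0$, which I would do via Lemma~\ref{lem-deg0}, reading $(\widetilde{H}_{1})$ and $(\widetilde{H}_{2})$ as its interval-wise hypotheses $(H_{1})$ and $(H_{2})$ with $\mathcal{I}=\{1,\ldots,m\}$ and $\rho=d$. Once more~\eqref{eq-flower} is the crucial bridge: for any non-negative solution of~\eqref{eq-lem-deg0} with $\max_{I^{+}_{i}}u\leq d$ for every $i$ it gives $\|u\|_{\infty}\leq d$, so $(\widetilde{H}_{2})$ immediately produces $(H_{2})$; while on the boundary configurations of $\Omega$ one has $\max_{I^{+}_{i}}u\leq d$ everywhere together with $\max_{I^{+}_{j}}u=d$ at some $j$, and the flower identity upgrades this to $\|u\|_{\infty}=d$, contradicting $(\widetilde{H}_{1})$. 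Lemma~\ref{lem-deg0} then yields the vanishing of the degree on $\Omega$, and the conclusion follows.

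The delicate point, which I expect to demand the most care, is the passage from the global norm hypothesis $(\widetilde{H}_{1})$ to the interval-wise $(H_{1})$: the two are not equivalent in the abstract, and the only tool that makes them match for the class of solutions relevant to the boundary analysis is precisely the flower identity~\eqref{eq-flower}. That identity in turn rests on the sign structure of $a_{\lambda,\mu}(x)$ on $\bigcup_{i}I^{-}_{i}$ together with the rewriting $(e^{cx}u')'+e^{cx}(a_{\lambda,\mu}(x)g(u)+\alpha v(x))=0$ that absorbs the damping term $cu'$, without which this reduction would not go through.
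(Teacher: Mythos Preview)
Your proposal is correct and follows exactly the route sketched in the paper: specialize Lemma~\ref{lem-deg0} to $\mathcal{I}=\{1,\ldots,m\}$, $\mathcal{J}=\emptyset$, $\rho=d$, obtaining degree zero on $\Omega=\{u:\|u\|_{\infty}<1,\ \max_{I^{+}_{i}}|u|<d\ \forall i\}$, and then pass from $\Omega$ to $B_{d}$ via~\eqref{eq-flower} and excision. Your remark that the passage from $(\widetilde{H}_{1})$ to $(H_{1})$ really goes through the boundary analysis (a non-negative solution on $\partial\Omega$ has $\max_{I^{+}_{i}}u\leq d$ for all $i$ with equality somewhere, hence $\|u\|_{\infty}=d$ by~\eqref{eq-flower}) rather than as a literal implication is exactly the right reading of this step.
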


\begin{corollary}\label{cor-deg1}
Let $c\in\mathbb{R}$ and let $a \colon \mathbb{R} \to \mathbb{R}$ be a $P$-periodic locally integrable function satisfying $(a_{*})$. Let $g\colon\mathopen{[}0,1\mathclose{]}\to\mathbb{R}$ be a continuous function satisfying $(g_*)$. 
Let $\lambda>0$ and $\mu > \mu^{\#}(\lambda)$. Let $d\in \mathopen{]}0,1\mathclose{[}$ and assume that the following property holds.
\begin{itemize}
\item[$(\widetilde{H}_{3})$]
If $\vartheta\in \mathopen{]}0,1\mathclose{]}$, then any non-negative $P$-periodic solution $u(x)$ of
\eqref{eq-lem-deg1} satisfies $\|u\|_\infty\not=d$.
\end{itemize}
Then, it holds that $\mathrm{D}_{L}\bigl{(}L-N_{\lambda,\mu},B_d \bigr{)} = 1$.
\end{corollary}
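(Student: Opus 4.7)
The plan is to deduce Corollary~\ref{cor-deg1} from Lemma~\ref{lem-deg1} via property~\eqref{eq-flower} and the excision property of the coincidence degree. First I would fix arbitrary constants $\rho,R$ with $d<\rho<R<1$ and consider the open bounded set
\[
V_d := \Omega^{\emptyset,\emptyset}_{(d,\rho,R)} = \{\, u \in X \colon \|u\|_{\infty} < 1,\; \max_{x \in I^{+}_{i}} |u(x)| < d,\; i=1,\ldots,m\,\},
\]
which trivially contains $B_d$.

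The first step is to apply (the argument of) Lemma~\ref{lem-deg1}, with $\mathcal{I} = \mathcal{J} = \emptyset$ and $r = d$, to obtain $\mathrm{D}_L(L - N_{\lambda,\mu}, V_d) = 1$. In that argument one has to exclude solutions of~\eqref{eq-lem-deg1} lying on $\partial V_d$ during the homotopy in $\vartheta \in \mathopen{]}0,1\mathclose{]}$. Any non-negative solution $u$ in $\overline{V_d}$ satisfies $\max_{x \in I^{+}_{i}} u(x) \leq d$ for every $i$, and property~\eqref{eq-flower} therefore forces $\|u\|_{\infty} = \max_{i} \max_{x \in I^{+}_{i}} u(x) \leq d < 1$. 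Thus, if $u \in \partial V_d$, then necessarily $\max_{x \in I^{+}_{i_0}} u(x) = d$ for some $i_0$, and one concludes $\|u\|_{\infty} = d$, contradicting $(\widetilde{H}_3)$.

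The second step is excision. Any solution of $Lu = N_{\lambda,\mu}u$ is non-negative by the maximum principle recalled after~\eqref{eq-flm}, so property~\eqref{eq-flower} gives $\|u\|_{\infty} = \max_{i} \max_{x \in I^{+}_{i}} u(x)$. Consequently, any solution lying in $V_d$ automatically satisfies $\|u\|_{\infty} < d$, i.e., belongs to $B_d$. Hence the closed subset $V_d \setminus B_d$ of $V_d$ contains no solution, and excision yields
\[
\mathrm{D}_L(L - N_{\lambda,\mu}, B_d) = \mathrm{D}_L(L - N_{\lambda,\mu}, V_d) = 1.
\]

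The main subtlety lies in the first step: the sup-norm condition in $(\widetilde{H}_3)$ is a priori weaker than the interval-wise condition $(H_3)$ required by Lemma~\ref{lem-deg1}, but property~\eqref{eq-flower} bridges this gap, making the two conditions equivalent for non-negative solutions lying in $\overline{V_d}$.
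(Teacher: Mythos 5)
Your proof is correct and follows exactly the route the paper itself sketches for this corollary: apply Lemma~\ref{lem-deg1} with $\mathcal{I}=\mathcal{J}=\emptyset$ on the set $\Omega^{\emptyset,\emptyset}_{(d,\rho,R)}$, using property~\eqref{eq-flower} to reconcile the sup-norm condition $(\widetilde{H}_{3})$ with the interval-wise condition $(H_{3})$ for the boundary solutions that actually matter, and then pass to $B_{d}$ by excision. Your explicit observation that $(\widetilde{H}_{3})$ is a priori weaker than $(H_{3})$ but suffices on $\overline{V_d}$ is precisely the point the paper leaves implicit, and you resolve it correctly.
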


\subsection{Finding the constant $\lambda^{*}$}\label{section-2.2}

In the following lemma we provide the constant $\lambda^*=\lambda^*(\rho)$ that appears in all our main results.

\begin{lemma}\label{lem-rho}
Let $c\in\mathbb{R}$ and let $a \colon \mathbb{R} \to \mathbb{R}$ be a $P$-periodic locally integrable function satisfying $(a_{*})$. Let $g\colon\mathopen{[}0,1\mathclose{]}\to\mathbb{R}$ be a continuous function satisfying $(g_*)$.
Then, for every $\rho \in \mathopen{]}0,1\mathclose{[}$, there exists $\lambda^{*} = \lambda^{*}(\rho) > 0$ such that, for every $\lambda > \lambda^{*}$, $\alpha \geq 0$, and $i \in \{1,\ldots,m\}$, there are no non-negative solutions $u(x)$ of
\begin{equation}\label{eq-lem-rho}
u'' + c u' + \lambda a^{+}(x)g(u) + \alpha = 0,
\end{equation}
with $u(x)$ defined for all $x \in I^{+}_{i}$ and
such that $\max_{x\in I^{+}_{i}} u(x) = \rho$.
\end{lemma}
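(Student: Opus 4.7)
My plan is to argue by contradiction: suppose there exist $\lambda,\alpha$ and an index $i$ together with a non-negative solution $u$ of the displayed equation on $I^{+}_{i}$ with $\max_{I^{+}_{i}}u=\rho$, and extract an explicit upper bound on $\lambda$. The constant $\lambda^{*}(\rho)$ will then be obtained as the maximum of the resulting thresholds over $i=1,\ldots,m$.

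The starting point is a monotonicity principle. Rewriting the equation as $(e^{cx}u')'=-e^{cx}(\lambda a^{+}(x)g(u)+\alpha)$ and noting that $a^{+},g(u),\alpha\geq 0$, the map $x\mapsto e^{cx}u'(x)$ is non-increasing on $I^{+}_{i}$. Hence $u$ is unimodal on $I^{+}_{i}$: there exists $x^{*}\in I^{+}_{i}$ with $u(x^{*})=\rho$, $u$ non-decreasing on $[\sigma_{i},x^{*}]$ and non-increasing on $[x^{*},\tau_{i}]$, and $u'(x^{*})=0$ whenever $x^{*}$ is interior. Integrating $(e^{cx}u')'$ twice from $x^{*}$ and applying Fubini yields the integral representation
\begin{equation*}
\rho - u(x) = \int_{x^{*}}^{x}\bigl(\lambda a^{+}(s)g(u(s))+\alpha\bigr)\,K(s,x)\,ds,\qquad x\in[x^{*},\tau_{i}],
\end{equation*}
with non-negative kernel $K(s,x):=\int_{s}^{x}e^{c(s-t)}\,dt$, together with a symmetric identity on $[\sigma_{i},x^{*}]$. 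Imposing $u(\sigma_{i}),u(\tau_{i})\geq 0$ at the endpoints gives the pair of bounds
\begin{equation*}
\lambda\int_{x^{*}}^{\tau_{i}}a^{+}(s)g(u(s))K(s,\tau_{i})\,ds\leq\rho,\qquad \lambda\int_{\sigma_{i}}^{x^{*}}a^{+}(s)g(u(s))K(\sigma_{i},s)\,ds\leq\rho.
\end{equation*}

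To close the argument I would produce, on at least one of the two sides of $x^{*}$, a lower bound on the corresponding integral which grows without bound as $\lambda\to\infty$. By continuity of $g$ with $g(\rho)>0$, fix $\delta\in(0,\rho)$ and $g_{*}>0$ so that $g(s)\geq g_{*}$ on $[\rho-\delta,\rho]$, and let $P=\{x\in I^{+}_{i} : u(x)\geq\rho-\delta\}=[x^{*}-h_{-},x^{*}+h_{+}]$ denote the plateau around $x^{*}$. Applied at the exit points $u(x^{*}\pm h_{\pm})=\rho-\delta$, and using the crude estimates $g(u)\leq\|g\|_{\infty}$ and $K(s,x)\leq C(x-s)$, the integral identity gives a coupled inequality between $h_{\pm}$ and the local $a^{+}$-mass on either side of $x^{*}$. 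Substituting $g(u)\geq g_{*}$ on $P$ into the right-endpoint inequality yields
\begin{equation*}
\lambda\int_{x^{*}}^{\tau_{i}}a^{+}(s)g(u(s))K(s,\tau_{i})\,ds\geq\lambda g_{*}\!\int_{x^{*}}^{x^{*}+h_{+}}\!\!a^{+}(s)K(s,\tau_{i})\,ds,
\end{equation*}
and combining the two inequalities produces a lower bound that blows up with $\lambda$; comparison with the upper bound $\rho$ pins down $\lambda$ explicitly.

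The main obstacle is that the weight $a^{+}$ is merely $L^{1}$, so no pointwise control is available, and the maximum point $x^{*}$ can sit anywhere in $I^{+}_{i}$; the lower bound has to be genuinely uniform in $x^{*}$. The resolution is a case analysis: if $a^{+}$ is nondegenerate on both sides of $x^{*}$, the argument goes through directly; if it vanishes in a neighborhood of $x^{*}$, then the plateau $\{u\equiv\rho\}$ extends through that dead zone, and one restarts the analysis from the first boundary where $a^{+}$ becomes effective, which must exist inside $I^{+}_{i}$ by virtue of $(a_{*})$. Tracking the constants in each case, and taking the maximum over $i=1,\ldots,m$, produces the uniform threshold $\lambda^{*}(\rho)$ depending only on $\rho$, $g$, $a$, $c$, and the geometry of the nodal partition.
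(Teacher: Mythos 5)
Your first steps coincide with the paper's: the monotonicity of $x\mapsto e^{cx}u'(x)$, the unimodality of $u$ on $I^{+}_{i}$, and the endpoint inequalities obtained by integrating twice from the maximum point and using $u(\sigma_{i}),u(\tau_{i})\geq 0$ are all correct and are exactly how the paper begins. The gap is in the step you yourself flag as the main obstacle: you never establish a lower bound for $\int a^{+}(s)g(u(s))K(s,\cdot)\,\mathrm{d}s$ that is uniform in $u$, $\alpha$ and $x^{*}$. Restricting $g(u)\geq g_{*}$ to the super-level set $\{u\geq\rho-\delta\}=[x^{*}-h_{-},x^{*}+h_{+}]$ cannot work in general, because for a merely $L^{1}$ weight the mass $\int_{x^{*}}^{x^{*}+h_{+}}a^{+}$ may be arbitrarily small (or zero) no matter how $h_{+}$ is estimated, while all of the $a^{+}$-mass may sit where $u<\rho-\delta$, a region on which you have no lower bound for $g(u)$. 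Your proposed dichotomy does not close this: ``nondegenerate on both sides of $x^{*}$'' has no quantitative meaning for an $L^{1}$ weight, the intermediate case (positive but tiny local mass near $x^{*}$) is not covered, and the claim that the plateau $\{u\equiv\rho\}$ propagates through a dead zone of $a^{+}$ is false for $\alpha>0$, since there $(e^{cx}u')'=-\alpha e^{cx}<0$ forces $u$ to decrease strictly away from $x^{*}$; after the dead zone you would still need a lower bound for $u$ (hence for $g(u)$) at the restart point that is uniform in $\lambda$ and $\alpha$, and this is precisely what is missing.

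The paper resolves this with a different device that your argument lacks: a Harnack-type inequality. One first fixes $\varepsilon>0$ a priori so that $\int_{\sigma_{i}+\varepsilon}^{\tau_{i}-\varepsilon}a^{+}>0$, then proves, using only the monotonicity of $e^{cx}u'$ and $u\geq0$ at the endpoints (no use of the equation's coefficients), the gradient bound $|u'(x)|\leq \varepsilon^{-1}u(x)e^{|c||I^{+}_{i}|}$ on $[\sigma_{i}+\varepsilon,\tau_{i}-\varepsilon]$ and from it $\min_{[\sigma_{i}+\varepsilon,\tau_{i}-\varepsilon]}u\geq\delta_{i}\rho$ with $\delta_{i}$ depending only on $\varepsilon$, $c$, $|I^{+}_{i}|$. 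This gives $g(u)\geq\eta:=\min_{[\delta_{i}\rho,\rho]}g>0$ on a \emph{fixed} subinterval carrying a \emph{fixed} positive amount of $a^{+}$-mass; a single integration of the equation over that subinterval (the $\alpha$-term has a favorable sign and is discarded, the boundary derivatives are controlled by the gradient bound) then yields $\lambda\eta\int_{\sigma_{i}+\varepsilon}^{\tau_{i}-\varepsilon}a^{+}\leq 2\rho\varepsilon^{-1}e^{|c||I^{+}_{i}|}+|c|\rho$, which is the desired bound on $\lambda$. To repair your proof you should replace the plateau argument by such a uniform lower bound for $u$ on a fixed interior subinterval chosen in advance to carry $a^{+}$-mass.
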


The proof is essentially the same as in \cite[Section~3.1]{BoFeZa-16}. However, we give the details for reader's convenience and since we need to slightly refine the estimates.

\begin{proof}  
We fix $\varepsilon > 0$ such that $\varepsilon < (\tau_{i}- \sigma_{i})/2$ and $\int_{\sigma_{i}+\varepsilon}^{\tau_{i}-\varepsilon} a^{+}(x)\,\mathrm{d}x > 0$, for every $i\in\{1,\ldots,m\}$.
Thus the quantity
\begin{equation*}
\nu_{\varepsilon} := \min_{i=1,\ldots,m} \int_{\sigma_{i}+\varepsilon}^{\tau_{i}-\varepsilon} a^{+}(x)\,\mathrm{d}x
\end{equation*}
is well defined and positive.

Let $\rho>0$ be fixed and consider $\alpha \geq 0$ and $i \in \{1,\ldots,m\}$.
Suppose that $u(x)$ is a non-negative solution of \eqref{eq-lem-rho} defined on $I^{+}_{i}=\mathopen{[}\sigma_{i},\tau_{i}\mathclose{]}$ and such that $\max_{x\in I^{+}_{i}} u(x) = \rho.$

We claim that
\begin{equation}\label{eq-3.2}
|u'(x)| \leq \dfrac{u(x)}{\varepsilon}\, e^{|c||I^{+}_{i}|}, \quad \text{for all $x\in \mathopen{[}\sigma_{i}+\varepsilon,\tau_{i}-\varepsilon\mathclose{]}$,}
\end{equation}
and that there exists~$\delta_{i} \in \mathopen{]}0,1\mathclose{[}$ (depending only on $\varepsilon$, $c$, and $|I^{+}_{i}|$) such that
\begin{equation}\label{eq-delta}
\min_{x\in \mathopen{[}\sigma_{i}+\varepsilon,\tau_{i}-\varepsilon\mathclose{]}} u(x)\geq \delta_{i} \rho.
\end{equation}

Once we prove \eqref{eq-3.2} and \eqref{eq-delta}, we can define
\begin{equation*}
\eta = \eta(\rho):= \min \bigl{\{} g(u) \colon u\in \mathopen{[}\delta_{i}\rho,\rho\mathclose{]} \bigr{\}}
\end{equation*}
and 
\begin{equation}\label{eq-lambdastar}
\lambda^{*} = \lambda^{*}(\rho) := \max_{i=1,\ldots,m} \dfrac{\rho \bigl(\varepsilon |c| + 2 e^{|c||I^{+}_{i}|}\bigl )}{\varepsilon\eta \int_{\sigma_{i}+\varepsilon}^{\tau_{i}-\varepsilon} a(x)\,\mathrm{d}x}.
\end{equation}
Then, by integrating equation~\eqref{eq-lem-rho} on $\mathopen{[}\sigma_{i}+\varepsilon,\tau_{i}-\varepsilon\mathclose{]}$ and using \eqref{eq-3.2} (for $x=\sigma_{i}+\varepsilon$ and $x= \tau_{i}-\varepsilon$), we obtain
\begin{equation*}
\begin{aligned}
\lambda \eta \int_{\sigma_{i}+\varepsilon}^{\tau_{i}-\varepsilon} a(x)\,\mathrm{d}x
   &\leq \lambda \int_{\sigma_{i}+\varepsilon}^{\tau_{i}-\varepsilon} a(x) g(u(x))\,\mathrm{d}x
\\ &= u'(\sigma_{i}+\varepsilon) - u'(\tau_{i}-\varepsilon) + c \bigl{(}u(\sigma_{i}+\varepsilon) - u(\tau_{i}-\varepsilon)\bigr{)} \\&\quad- \alpha \, (\tau_{i}-\varepsilon-\sigma_{i}-\varepsilon)
\\ &\leq 2 \dfrac{\rho}{\varepsilon} e^{|c||I^{+}_{i}|} + |c| \rho.
\end{aligned}
\end{equation*}
Therefore, non-negative $P$-periodic solutions $u(x)$ of \eqref{eq-lem-rho} with
$\max_{x\in I} u(x) = \rho$ can exist only for $\lambda\leq\lambda^*$. This proves the lemma.

\smallskip
\noindent\textit{Proving estimate~\eqref{eq-3.2}}. Since~\eqref{eq-lem-rho} is equivalent to $(e^{cx} u')' + e^{cx} ( \lambda a^{+}(x)g(u)+\alpha ) = 0$, we observe that the map $x\mapsto e^{cx} u'(x)$ is non-increasing on $I^{+}_{i}$. Let us fix $x\in \mathopen{[}\sigma_{i}+\varepsilon,\tau_{i}-\varepsilon\mathclose{]}$. If $u'(x)=0$ then the estimates is obvious. Otherwise, from $u'(x)>0$ and by using the monotonicity of the map $x\mapsto e^{cx} u'(x)$, we have that 
\begin{equation*}
u'(\xi)\geq u'(x) e^{c(x-\xi)}, \quad \text{for all $\xi\in\mathopen{[}\sigma_{i},x\mathclose{]}$.}
\end{equation*}
By integrating the above inequality we obtain 
\begin{equation*}
u(x)\geq u(x)-u(\sigma_{i})\geq u'(x)e^{-|c|(x-\sigma_{i})}(x-\sigma_{i})\geq \varepsilon u'(x)  e^{-|c||I^{+}_{i}|}
\end{equation*}
that implies~\eqref{eq-3.2}. The case $u'(x)<0$ is analogous.

\smallskip
\noindent\textit{Proving estimate~\eqref{eq-delta}}. Let $x_{0}\in I^{+}_{i}$ be such that $u(x_{0}) = \rho$ and observe that $u'(x_{0}) = 0$, if $\sigma_{i} < x_{0} < \tau_{i}$, while $u'(x_{0}) \leq 0$, if $x_{0} =\sigma_{i}$, and $u'(x_{0}) \geq 0$, if $x_{0} = \tau_{i}$. If $x_{*}\in \mathopen{[}\sigma_{i}+\varepsilon,\tau_{i}-\varepsilon\mathclose{]}$
is such that $u(x_{*}) = \min_{x\in \mathopen{[}\sigma_{i}+\varepsilon,\tau_{i}-\varepsilon\mathclose{]}} u(x)$, from~\eqref{eq-3.2} we obtain that
\begin{equation}\label{eq-3.4}
|u'(x_{*})| \leq \dfrac{u(x_{*})}{\varepsilon} \, e^{|c||I^{+}_{i}|}.
\end{equation}
On the other hand, by the monotonicity of the function $x\mapsto e^{cx} u'(x)$ in $\mathopen{[}\sigma_{i},\tau_{i}\mathclose{]}$,
\begin{equation}\label{eq-3.5}
u'(\xi) e^{c\xi} \geq u'(x_{*}) e^{cx_{*}}, \quad \text{for all $\xi\in \mathopen{[}\sigma_{i},x_{*}\mathclose{]}$,}
\end{equation}
and
\begin{equation}\label{eq-3.6}
u'(\xi) e^{c\xi} \leq u'(x_{*}) e^{cx_{*}}, \quad \text{for all $\xi\in \mathopen{[}x_{*},\tau_{i}\mathclose{]}$.}
\end{equation}
From the properties about $u'(x_{0})$, we have that if $x_{0} > x_{*}$,
then $u'(x_{0}) \geq 0$ and so $u'(x_{*}) \geq 0$.
Similarly, if $x_{0} < x_{*}$, then $u'(x_{0}) \leq 0$ and so $u'(x_{*}) \leq 0$.
The case $x_{*}=x_{0}$ is trivial.
As a consequence we have either
\begin{equation}\label{eq-3.7}
\sigma_{i} \leq x_{0} < x_{*}\in \mathopen{[}\sigma_{i}+\varepsilon,\tau_{i}-\varepsilon\mathclose{]},
\;\;  u(x_{0}) = \rho, \;\; u'(\xi) \leq 0, \;\; \text{for all $\xi\in \mathopen{[}x_{0},x_{*}\mathclose{]}$,}
\end{equation}
or
\begin{equation}\label{eq-3.8}
\tau_{i}\geq x_{0} > x_{*}\in \mathopen{[}\sigma_{i}+\varepsilon,\tau_{i}-\varepsilon\mathclose{]},
\;\;  u(x_{0}) = \rho, \;\; u'(\xi) \geq 0, \;\; \text{for all $\xi\in \mathopen{[}x_{*},x_{0}\mathclose{]}$.}
\end{equation}
When~\eqref{eq-3.7} holds, from \eqref{eq-3.5} we have $-u'(\xi) \leq - u'(x_{*}) e^{c(x_{*}-\xi)}$ for all $\xi\in \mathopen{[}x_{0},x_{*}\mathclose{]}$. An integration of the previous inequality on $\mathopen{[}x_{0},x_{*}\mathclose{]}$ and an application of \eqref{eq-3.4} lead to \begin{equation*}
\rho - u(x_{*}) \leq |u'(x_{*})| \, e^{|c||I^{+}_{i}|}(x_{*}-x_{0}) \leq \dfrac{u(x_{*})}{\varepsilon} \, e^{2|c||I^{+}_{i}|}|I^{+}_{i}|.
\end{equation*}
Then, by fixing
\begin{equation*}
\delta_{i}:= \dfrac{\varepsilon}{\varepsilon + e^{2|c||I^{+}_{i}|} |I^{+}_{i}|}
\end{equation*}
we have \eqref{eq-delta}. The same estimate follows in the case of \eqref{eq-3.8}, by using \eqref{eq-3.6} and integrating on $\mathopen{[}x_{*},x_{0}\mathclose{]}$.
\end{proof}

\subsection{Some estimates for small solutions}\label{section-2.3}

The following lemma gives a lower bound for positive $T$-periodic solutions of~\eqref{eq-lem-deg1} that will be exploited in the proof of the existence result in Theorem~\ref{th-exis}.

\begin{lemma}\label{lem-r0}
Let $c\in\mathbb{R}$ and let $a \colon \mathbb{R} \to \mathbb{R}$ be a $P$-periodic locally integrable function satisfying $(a_{*})$. Let $g\colon\mathopen{[}0,1\mathclose{]}\to\mathbb{R}$ be a continuously differentiable function satisfying $(g_*)$ and $g'(0)=0.$ Let $\lambda>0$ and $\mu>\mu^{\#}(\lambda).$ Then, there exists $r_{0}\in\mathopen{]}0,1\mathclose{[}$ such that for every $\vartheta\in\mathopen{]}0,1\mathclose{]}$, every non-negative $P$-periodic solution $u(x)$ of~\eqref{eq-lem-deg1} with $\|u\|_\infty\leq r_{0}$ satisfies $u\equiv 0$.
\end{lemma}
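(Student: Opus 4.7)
Suppose $u$ is a non-negative $P$-periodic solution of~\eqref{eq-lem-deg1} with $\|u\|_{\infty}\leq r_{0}$, and decompose $u=\bar{u}+v$, where $\bar{u}:=P^{-1}\int_{0}^{P}u\,\mathrm{d}x$ and $\int_{0}^{P}v\,\mathrm{d}x=0$. My plan is to run a self-referential (bootstrap) estimate which, for $r_{0}$ small enough, forces $g(\bar{u})=0$, hence $\bar{u}=0$ and $u\equiv 0$. The two driving ingredients will be the nondegeneracy $\mu>\mu^{\#}(\lambda)$ (which supplies a strictly positive constant $A$ below) and the hypothesis $g'(0)=0$ (which supplies a small parameter $\epsilon(r_{0})$).

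Integrating~\eqref{eq-lem-deg1} over $\mathopen{[}0,P\mathclose{]}$ and using $P$-periodicity yields the key identity $\int_{0}^{P}a_{\lambda,\mu}(x)\,g(u(x))\,\mathrm{d}x=0$. Setting $A:=-\int_{0}^{P}a_{\lambda,\mu}\,\mathrm{d}x$, which is strictly positive thanks to $\mu>\mu^{\#}(\lambda)$, and splitting $g(u)=g(\bar{u})+[g(u)-g(\bar{u})]$ converts this into
\begin{equation*}
A\,g(\bar{u})\;=\;\int_{0}^{P}a_{\lambda,\mu}(x)\,\bigl[g(u(x))-g(\bar{u})\bigr]\,\mathrm{d}x.
\end{equation*}
On the other hand, $v$ satisfies $v''+cv'=-\vartheta\,a_{\lambda,\mu}(x)\,g(u(x))$, whose right-hand side has zero mean by the identity just established. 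Rewriting the equation as $(e^{cx}v')'=-\vartheta\,e^{cx}a_{\lambda,\mu}(x)\,g(u(x))$ and integrating twice, using $P$-periodicity to fix the two constants of integration, one obtains a uniform Green's function bound
\begin{equation*}
\|v\|_{\infty}\;\leq\;C\,\|a_{\lambda,\mu}\|_{L^{1}}\,\|g(u)\|_{\infty},
\end{equation*}
with $C=C(c,P)$ independent of $\vartheta\in\mathopen{]}0,1\mathclose{]}$.

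Here the regularity assumptions enter: since $g\in\mathcal{C}^{1}(\mathopen{[}0,1\mathclose{]})$ with $g'(0)=0$, the quantity $\epsilon(r_{0}):=\max_{\mathopen{[}0,r_{0}\mathclose{]}}|g'|$ satisfies $\epsilon(r_{0})\to 0$ as $r_{0}\to 0^{+}$, and the mean value theorem gives both $|g(u(x))-g(\bar{u})|\leq\epsilon(r_{0})\,|v(x)|$ and $\|g(u)\|_{\infty}\leq g(\bar{u})+\epsilon(r_{0})\,\|v\|_{\infty}$. For $r_{0}$ small enough that $\epsilon(r_{0})\,C\,\|a_{\lambda,\mu}\|_{L^{1}}\leq 1/2$, the Green's function bound tightens to $\|v\|_{\infty}\leq 2C\,\|a_{\lambda,\mu}\|_{L^{1}}\,g(\bar{u})$. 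Plugging this together with $|g(u)-g(\bar{u})|\leq \epsilon(r_{0})|v|$ into the identity of the previous paragraph,
\begin{equation*}
A\,g(\bar{u})\;\leq\;\|a_{\lambda,\mu}\|_{L^{1}}\,\epsilon(r_{0})\,\|v\|_{\infty}\;\leq\;2C\,\|a_{\lambda,\mu}\|_{L^{1}}^{\,2}\,\epsilon(r_{0})\,g(\bar{u}).
\end{equation*}
Shrinking $r_{0}$ once more so that $2C\,\|a_{\lambda,\mu}\|_{L^{1}}^{\,2}\,\epsilon(r_{0})<A$ forces $g(\bar{u})=0$; since $\bar{u}\in\mathopen{[}0,r_{0}\mathclose{]}\subset\mathopen{[}0,1\mathclose{[}$ and $g>0$ on $\mathopen{]}0,1\mathclose{[}$ by $(g_{*})$, this gives $\bar{u}=0$, whence $\|v\|_{\infty}=0$ and $u\equiv 0$, as desired.

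The delicate point is the self-referential structure of the estimate: the displacement $v$ is controlled by $\|g(u)\|_{\infty}$, which is itself controlled by $g(\bar{u})$ only up to an $\epsilon(r_{0})\|v\|_{\infty}$-correction, so one has to absorb the $\|v\|$-term and close the loop. The combined smallness of $\epsilon(r_{0})$, guaranteed by $g\in\mathcal{C}^{1}$ and $g'(0)=0$, and the strict positivity of $A$, guaranteed by $\mu>\mu^{\#}(\lambda)$, are exactly what is needed to make the coefficient $A-2C\|a_{\lambda,\mu}\|_{L^{1}}^{\,2}\epsilon(r_{0})$ strictly positive and thus rule out $g(\bar{u})>0$; dropping either hypothesis would break the argument.
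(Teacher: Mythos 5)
Your argument is correct, and it takes a genuinely different route from the paper. The paper argues by contradiction on a sequence of solutions with $\|u_{n}\|_{\infty}\to 0$ and performs the Riccati-type substitution $z_{n}=u_{n}'/(\vartheta_{n}g(u_{n}))$, first proving a uniform bound $\|z_{n}\|_{\infty}\leq M$ by a maximal-interval argument and then integrating the Riccati equation over a period to get $0<-\int_{0}^{P}a_{\lambda,\mu}\leq \sup|g'(u_{n})|\,PM^{2}$, which is incompatible with $g'(0)=0$. You instead work directly with a single solution, split it as mean plus oscillation, and close a self-referential estimate; the two ingredients you isolate (strict positivity of $A=-\int_{0}^{P}a_{\lambda,\mu}$ from $\mu>\mu^{\#}(\lambda)$, and smallness of $\max_{[0,r_{0}]}|g'|$ from $g\in\mathcal{C}^{1}$ with $g'(0)=0$) are exactly the ones the paper uses, just deployed without dividing by $g(u)$. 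All your steps check out: the identity $A\,g(\bar u)=\int_{0}^{P}a_{\lambda,\mu}[g(u)-g(\bar u)]$ follows from integrating the equation over a period; the bound $\|v\|_{\infty}\leq C\|a_{\lambda,\mu}\|_{L^{1}}\|g(u)\|_{\infty}$ with $C=C(c,P)$ uniform in $\vartheta\in\mathopen{]}0,1\mathclose{]}$ follows since $(e^{cx}v')'=-\vartheta e^{cx}a_{\lambda,\mu}g(u)$, $v'$ vanishes at a maximum point of $v$, and $v$ has zero mean; and the mean value theorem applies because $u(x),\bar u\in[0,r_{0}]$. Your approach buys an explicit, in principle computable $r_{0}$ and a direct (non-sequential) proof that sidesteps the implicit requirement $u(x)>0$ everywhere needed for the paper's change of variable (the paper covers this via uniqueness for the Cauchy problem, but does not say so); the paper's approach buys reusability, since the same Riccati substitution is recycled in Lemma~\ref{lem-R0} to handle solutions near $u\equiv 1$. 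One cosmetic remark: at the end it is cleaner to conclude from $\bar u=0$, $u\geq 0$ and $\int_{0}^{P}u=0$ that $u\equiv 0$ directly, rather than passing through $\|v\|_{\infty}=0$.
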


\begin{proof}
Let $M>e^{|c|P} \|a\|_{L^{1}(0,P)}$.
By contradiction, we assume that there exists a sequence $(u_{n}(x))_{n}$ of non-negative $P$-periodic solutions of~\eqref{eq-lem-deg1} for $\vartheta=\vartheta_{n}\in\mathopen{]}0,1\mathclose{]}$ satisfying $0<\|u_{n}\|_{\infty}\to0$. We perform the change of variable 
\begin{equation}\label{change-var}
z_{n}(x) := \dfrac{u_{n}'(x)}{\vartheta_{n} g(u_{n}(x))}, \quad x\in\mathbb{R}.
\end{equation}
An easy computation shows that
\begin{equation}\label{eq-zn}
z_{n}'(x)+c z_{n}(x) + \vartheta_{n} g'(u_{n}(x))z_{n}^{2}(x) +a_{\lambda,\mu}(x) = 0.
\end{equation}
We claim that
\begin{equation*}
\|z_{n}\|_{\infty} \leq M.
\end{equation*}
We suppose by contradiction that this is not true. Then, recalling the fact that $z_{n}(x)$ vanishes at some point $\tilde{x}_{n}\in\mathopen{[}0,P\mathclose{]}$, we can find a maximal interval $J_{n}\subseteq \mathopen{[}0,P\mathclose{]}$ either of the form $\mathopen{[}\tilde{x}_{n},\hat{x}_{n}\mathclose{]}$ or of the form $\mathopen{[}\hat{x}_{n},\tilde{x}_{n}\mathclose{]}$, such that $|z_{n}(x)| \leq M$ for all $x\in J_{n}$ and $|z_{n}(x)| > M$ for some $x\notin J_{n}$. By the maximality of the interval $J_{n}$, we also know that $|z_{n}(\hat{x}_{n})| = M$.
Rewriting \eqref{eq-zn} as
\begin{equation*}
\bigl{(} e^{c(x-\hat{x}_{n})}z_{n}(x) \bigr{)}' + e^{c(x-\hat{x}_{n})} \bigl{(} \vartheta_{n} g'(u_{n}(x))z_{n}^{2}(x) +a_{\lambda,\mu}(x)\bigr{)} = 0,
\end{equation*}
an integration on $J_{n}$ gives
\begin{equation*}
z_{n}(\hat{x}_{n}) = - \int_{J_{n}} \bigl{(} e^{c(x-\hat{x}_{n})} \bigl{(} \vartheta_{n} g'(u_{n}(x))z_{n}^{2}(x) +a_{\lambda,\mu}(x)\bigr{)}  \bigr{)} \, \mathrm{d}x 
\end{equation*}
from which
\begin{equation*}
M = |z_{n}(\hat{x}_{n})| \leq e^{|c|P} \biggl{(} \sup_{x \in \mathopen{[}0,P\mathclose{]}} |g '(u_{n}(x))| PM^{2}  +\|a_{\lambda,\mu}\|_{L^{1}(0,P)} \biggr{)}.
\end{equation*}
Passing to the limit as $n\to\infty$ and using $g'(0)=0$ we thus obtain $M\leq e^{|c|P}\|a_{\lambda,\mu}\|_{L^{1}(0,P)}$, contradicting the choice of $M$.

Now, we integrate \eqref{eq-zn} on $\mathopen{[}0,P\mathclose{]}$ to obtain
\begin{equation*}
0 < -\int_{0}^{P} a_{\lambda,\mu}(x)\,\mathrm{d}x \leq \sup_{x \in \mathopen{[}0,P\mathclose{]}} |g'(u_{n}(x))| PM^{2},
\end{equation*}
and so a contradiction is reached using the fact that $g'(u)$ is continuous and $g'(0)=0$.
\end{proof}

The next lemma gives us some estimates for positive solutions of~\eqref{eq-lem-deg1} which will be used to prove the multiplicity result in Theorem~\ref{th-mult}. To state it, let us introduce the following notation. For any constant $d > 0$, we set
\begin{equation*}
\zeta(d) := \max_{\frac{d}{2}\leq u \leq d} \dfrac{g(u)}{u}, \qquad
\gamma(d) := \min_{\frac{d}{2}\leq u \leq d} \dfrac{g(u)}{u}.
\end{equation*}
Furthermore, recalling $(a_{*})$ and the positions in \eqref{Ipm},
for all $i\in\{1,\ldots, m\}$, we set
\begin{align}\label{eq-AB}
A^{\mathrm{r}}_{i}(x) := \int_{\tau_{i}}^{x} a^{-}(\xi)\,\mathrm{d}\xi, \quad A^{\mathrm{l}}_{i}(x) := \int_{x}^{\sigma_{i+1}} a^{-}(\xi)\,\mathrm{d}\xi, \quad x\in I^{-}_{i}.
\end{align}

\begin{lemma}\label{lem-r}
Let $c\in\mathbb{R}$ and let $a \colon \mathbb{R} \to \mathbb{R}$ be a $P$-periodic locally integrable function satisfying $(a_{*})$. Let $g\colon\mathopen{[}0,1\mathclose{]}\to\mathbb{R}$ be a continuous function satisfying $(g_*)$ and $(g_{0})$. Let $\lambda>0$. Then, there exists $\bar{r}\in\mathopen{]}0,1\mathclose{[}$ such that for every $r\in\mathopen{]}0,\bar{r}\mathclose{]}$, for every $\vartheta\in\mathopen{]}0,1\mathclose{]}$, and for every $\mu>0$, if $u(x)$ is a non-negative solution of~\eqref{eq-lem-deg1} defined in $I^{-}_{i-1}\cup I^{+}_{i} \cup I^{-}_{i}$ for some $i\in \{1,\ldots,m\}$ with $\max_{x\in I^{+}_{i}} u(x) = r,$ the following hold:
\begin{itemize}
\item if $u'(\sigma_{i}) \geq 0$, then
\begin{alignat*}{1}
&u(\sigma_{i+1}) \geq r \biggl{(} 1 + \dfrac{\vartheta}{2} \Bigl{(}  \mu \gamma(r) \|A^{\mathrm{r}}_{i}\|_{L^1(I_{i}^{-})} e^{-|c||I^{-}_{i}|} - 1 \Bigr{)} \biggr{)},
\shortintertext{and}
&u'(\sigma_{i+1}) \geq \vartheta r \biggl{(} \dfrac{1}{2} \mu \gamma(r) \|a\|_{L^{1}(I^{-}_{i})}e^{-|c||I^{-}_{i}|}  \\
&\hspace{2.3cm}-\lambda \|a\|_{L^{1}(I^{+}_{i})} \zeta(r) e^{|c||I^{+}_{i}\cup I^{-}_{i}|}\biggr{)};
\end{alignat*}
\item if $u'(\tau_{i}) \leq 0$, then
\begin{alignat*}{1}
&u(\tau_{i-1}) \geq r \biggl{(} 1 + \dfrac{\vartheta}{2} \Bigl{(}  \mu \gamma(r) \|A^{\mathrm{l}}_{i-1}\|_{L^1(I_{i-1}^{-})} e^{-|c||I^{-}_{i-1}|} - 1 \Bigr{)} \biggr{)},
\shortintertext{and}
&u'(\tau_{i-1}) \leq -\vartheta r \biggl{(} \dfrac{1}{2} \mu \gamma(r) \|a\|_{L^{1}(I^{-}_{i-1})}e^{-|c||I^{-}_{i-1}|}  \\
&\hspace{2.5cm}- \lambda \|a\|_{L^{1}(I^{+}_{i})} \zeta(r) e^{|c||I^{-}_{i-1}\cup I^{+}_{i}|}\biggr{)}.
\end{alignat*}
\end{itemize}
\end{lemma}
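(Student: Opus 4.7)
The plan is to exploit the weighted monotonicity of $x \mapsto e^{cx} u'(x)$, which by the equation is non-increasing on $I^{+}_{i}$ and non-decreasing on $I^{-}_{i}$, together with the superlinearity at zero in the form $\zeta(r) \to 0$ as $r \to 0^+$ (a consequence of $(g_{0})$). I will treat only the case $u'(\sigma_i) \geq 0$; the other is entirely symmetric by time reversal with $I^{-}_{i-1}$ playing the role of $I^{-}_{i}$. First I would choose $\bar r \in \mathopen{]}0,1\mathclose{[}$ so small that, for every $r \in \mathopen{]}0, \bar r\mathclose{]}$, the bound $g(u) \leq r\,\zeta(r)$ holds uniformly for $u \in \mathopen{[}0,r\mathclose{]}$ (which follows from $(g_{0})$) and, simultaneously, the quantity $\lambda\,\zeta(r)$ is small enough to absorb all error terms appearing below.

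On $I^{+}_{i}$, the equation reads $(e^{cx} u')' = -\vartheta\lambda e^{cx} a^{+}(x) g(u) \leq 0$, so $e^{cx}u'$ is non-increasing, and since $u'(\sigma_i) \geq 0$, the maximum of $u$ on $I^{+}_{i}$ is attained at a point $x_0$ with $u'(x_0) = 0$ (the endpoint subcases being easy to handle separately). Integration on $\mathopen{[}x_0,\tau_i\mathclose{]}$ together with $g(u) \leq r\,\zeta(r)$ yields
\[
|u'(\tau_i)| \leq \vartheta\lambda r\,\zeta(r)\,\|a\|_{L^{1}(I^{+}_{i})}\,e^{|c||I^{+}_{i}|},
\]
and a further integration gives $u(\tau_i) \geq r\bigl(1 - \vartheta\lambda\,\zeta(r)\,|I^{+}_{i}|\,\|a\|_{L^{1}(I^{+}_{i})}\,e^{|c||I^{+}_{i}|}\bigr)$, which is at least $r(1-\vartheta/4)$ by the choice of $\bar r$.

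On $I^{-}_{i}$, the equation reads $(e^{cx}u')' = \vartheta\mu e^{cx} a^{-}(x) g(u) \geq 0$, and the crude lower bound $u'(x) \geq -|u'(\tau_i)|\,e^{|c||I^{-}_{i}|}$, combined with the estimates from the previous step, shows that $u(x) \geq r/2$ throughout $I^{-}_{i}$, so that $g(u(x)) \geq \gamma(r)\,u(x) \geq \gamma(r)\,r/2$ there. Integrating the equation once from $\tau_i$ to $\sigma_{i+1}$ and plugging in the bound on $|u'(\tau_i)|$ yields at once the claimed lower bound on $u'(\sigma_{i+1})$. Integrating once more and applying Fubini one obtains
\[
u(\sigma_{i+1}) = u(\tau_i) + u'(\tau_i)\,\frac{1 - e^{-c|I^{-}_{i}|}}{c} + \vartheta\mu \int_{\tau_i}^{\sigma_{i+1}} a^{-}(\xi)\, g(u(\xi))\,\frac{1 - e^{-c(\sigma_{i+1}-\xi)}}{c}\,\mathrm{d}\xi,
\]
and the elementary inequality $(1-e^{-cy})/c \geq y\,e^{-|c|y}$ (valid for every $c\in\mathbb{R}$ and every $y \geq 0$) then exposes the weighted integral $\int_{I^{-}_{i}} a^{-}(\xi)(\sigma_{i+1}-\xi)\,\mathrm{d}\xi = \|A^{\mathrm{r}}_{i}\|_{L^{1}(I^{-}_{i})}$. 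Combining everything and absorbing the error into $\vartheta/4$ gives exactly $u(\sigma_{i+1}) \geq r\bigl(1 + \tfrac{\vartheta}{2}\bigl(\mu\gamma(r)\,\|A^{\mathrm{r}}_{i}\|_{L^{1}(I^{-}_{i})}\,e^{-|c||I^{-}_{i}|} - 1\bigr)\bigr)$.

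The main technical obstacle is establishing that $u \geq r/2$ on the whole of $I^{-}_{i}$, since the bound $g(u) \geq \gamma(r)\,r/2$ is only available for $u \in \mathopen{[}r/2,r\mathclose{]}$. Both the deficit $r - u(\tau_i)$ and the maximal descent of $u$ from $u(\tau_i)$ on $I^{-}_{i}$ are of order $\vartheta r\,\zeta(r)$ times the explicit constants produced above; by $(g_{0})$ both can be made smaller than $\vartheta r/4$ for $r \leq \bar r$, which simultaneously secures the pointwise lower bound on $u$ needed to invoke $\gamma(r)$ and the absorption of all error terms in the final inequalities.
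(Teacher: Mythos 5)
Your argument follows the paper's proof almost step for step (weighted monotonicity of $e^{cx}u'$, the preliminary bounds on $u(\tau_i)$ and $u'(\tau_i)$ on $I^{+}_{i}$, the lower bound $u\geq r/2$ on $I^{-}_{i}$ obtained from the crude derivative estimate, and the double integration producing $\|A^{\mathrm{r}}_{i}\|_{L^{1}(I^{-}_{i})}$ --- your Fubini identity $\int_{I^{-}_{i}}a^{-}(\xi)(\sigma_{i+1}-\xi)\,\mathrm{d}\xi=\|A^{\mathrm{r}}_{i}\|_{L^{1}(I^{-}_{i})}$ is exactly the paper's integration of $A^{\mathrm{r}}_{i}(x)$ over $I^{-}_{i}$). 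There is, however, one step that does not hold as stated: you claim that $\bar r$ can be chosen so that $g(u)\leq r\,\zeta(r)$ for \emph{all} $u\in\mathopen{[}0,r\mathclose{]}$, asserting this ``follows from $(g_{0})$''. With the paper's definition $\zeta(r)=\max_{r/2\leq u\leq r}g(u)/u$, this is false in general: $(g_{0})$ controls $g(u)/u$ only asymptotically as $u\to 0^{+}$, and a continuous $g$ satisfying $(g_{*})$ and $(g_{0})$ may have $\max_{\mathopen{[}0,r/2\mathclose{]}}g$ much larger than $r\,\zeta(r)$ (take $g$ with a bump on $\mathopen{[}r/4,r/2\mathclose{]}$ and very small values on $\mathopen{[}r/2,r\mathclose{]}$; this can be arranged along a sequence $r_n\to 0$). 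Since on $I^{+}_{i}$ you only know a priori that $0\leq u\leq r$, your estimate for $|u'(\tau_i)|$ is not justified as written.

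The fix is precisely what the paper does: on $I^{+}_{i}$ one first runs a maximal-interval (bootstrap) argument. Let $\mathopen{[}s_1,s_2\mathclose{]}\subseteq I^{+}_{i}$ be the maximal interval containing the maximum point on which $u\geq r/2$; there the inequality $g(u(\xi))\leq \zeta(r)\,u(\xi)\leq \zeta(r)\,r$ is legitimate, and the resulting derivative bound, together with the smallness of $\lambda\zeta(r)$ guaranteed by condition \eqref{cond-d}, forces $u>r/2$ on $\mathopen{[}s_1,s_2\mathclose{]}$ and hence $\mathopen{[}s_1,s_2\mathclose{]}=I^{+}_{i}$. You in fact sketch exactly this kind of argument for $I^{-}_{i}$ in your last paragraph, so the omission on $I^{+}_{i}$ is an oversight rather than a structural flaw; with that bootstrap inserted, your proof coincides with the paper's. (A further point, which your write-up shares with the paper and which I therefore do not count against you, is that the bound $g(u(\xi))\geq \gamma(r)\,r/2$ on $I^{-}_{i}$ is only justified where $u(\xi)\leq r$.)
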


\begin{proof}
From condition $(g_{0})$ we can fix a constant $\bar{r}\in\mathopen{]}0,1\mathclose{[}$ such that for every $r\in\mathopen{]}0,\bar{r}\mathclose{]}$ it holds that
\begin{equation}\label{cond-d}
\zeta(r) < \dfrac{1}{2 \lambda \, \displaystyle{\max_{i=1,\ldots,m}}e^{|c||I^{-}_{i-1}\cup I^{+}_{i}\cup I^{-}_{i}|} |I^{-}_{i-1}\cup I^{+}_{i}\cup I^{-}_{i}|\|a\|_{L^1(I^{+}_{i})}}.
\end{equation}

We give the proof when $u'(\sigma_{i}) \geq 0$ (the case $u'(\tau_{i}) \leq 0$ follows from analogous arguments). 
We divide the arguments into two parts: in the first one, we provide some estimates for $u(\tau_{i})$ and $u'(\tau_{i})$, in the second one, we obtain the inequalities on $u(\sigma_{i+1})$ and $u'(\sigma_{i+1})$.

\noindent
\textit{Step~1.} Let $\hat{x}_{i}\in I^{+}_{i}$ be such that
\begin{equation*}
u(\hat{x}_{i})=\max_{t\in I^{+}_{i}} u(x) = r.
\end{equation*}
We notice that if $\sigma_{i} \leq \hat{x}_{i} < \tau_{i}$, then $u'(\hat{x}_{i}) = 0$ (since $u'(\sigma_{i}) \geq 0$). Otherwise, if $\hat{x}_{i} = \tau_{i}$, then $u'(\hat{x}_{i}) \geq 0$.

Suppose first that $u'(\hat{x}_{i}) = 0$. Let $\mathopen{[}s_{1},s_{2}\mathclose{]} \subseteq I^{+}_{i}$ be the maximal closed interval containing $\hat{x}_{i}$ and such that
$u(x)\geq r/2$ for all $x\in\mathopen{[}s_{1},s_{2}\mathclose{]}$. We claim that $\mathopen{[}s_{1},s_{2}\mathclose{]} = I^{+}_{i}$.
From
\begin{equation*}
\bigl{(}e^{cx} u'(x)\bigr{)} ' = -\vartheta \lambda a^{+}(x) g(u(x)) e^{cx}, \quad x \in I^{+}_{i}, 
\end{equation*}
integrating between $\hat{x}_{i}$ and $x$ and using $u'(\hat{x}_{i}) = 0$, we obtain
\begin{equation*}
u'(x) = -\vartheta \lambda \int_{\hat{x}_{i}}^{x} a^{+}(\xi) g(u(\xi)) e^{c(\xi-x)} \,\mathrm{d}\xi, \quad \text{for all $x\in I^{+}_{i}$.}
\end{equation*}
Then,
\begin{equation*}
|u'(x)| \leq \vartheta \lambda \|a\|_{L^{1}(I^{+}_{i})} \zeta(r)r e^{|c||I^{+}_{i}|}, \quad \text{for all $x\in \mathopen{[}s_{1},s_{2}\mathclose{]}$,}
\end{equation*}
and
\begin{align*}
u(x) &= u(\hat{x}_{i}) + \int_{\hat{x}_{i}}^{x} u'(\xi)\,\mathrm{d}\xi  \\
&\geq r \Bigl{(}1 - \lambda \|a\|_{L^{1}(I^{+}_{i})} \zeta(r) e^{|c||I^{+}_{i}|}|I^{+}_{i}| \Bigr{)}>\dfrac{r}{2}, \quad \text{for all $x\in \mathopen{[}s_{1},s_{2}\mathclose{]}$.}
\end{align*}
This inequality, together with the maximality of $\mathopen{[}s_{1},s_{2}\mathclose{]}$, implies that $\mathopen{[}s_{1},s_{2}\mathclose{]}=I^{+}_{i}$.
Hence
\begin{equation}\label{eq-xx}
u'(x) \geq -\vartheta \lambda \|a\|_{L^{1}(I^{+}_{i})} \zeta(r)r e^{|c||I^{+}_{i}|}, \quad \text{for all $x\in I^{+}_{i}$,}
\end{equation}
implying
\begin{equation}\label{eq-4.2tau}
u'(\tau_{i}) \geq - \vartheta\lambda \|a\|_{L^{1}(I^{+}_{i})} \zeta(r)r e^{|c||I^{+}_{i}|}.
\end{equation}
Furthermore, by integrating \eqref{eq-xx} on $\mathopen{[}\hat{x}_{i},\tau_{i}\mathclose{]}$, we obtain
\begin{equation}\label{eq-4.3tau}
u(\tau_{i}) \geq r \bigl{(} 1 - \vartheta \lambda  \|a\|_{L^{1}(I^{+}_{i})} \zeta(r) e^{|c||I^{+}_{i}|}|I^{+}_{i}|\bigr{)}.
\end{equation}

On the other hand, if we suppose that
$\hat{x}_{i} = \tau_{i}$ and $u'(\hat{x}_{i})> 0$, we have
\begin{equation*}
u(\tau_{i}) = r \geq r \bigl{(} 1 - \vartheta \lambda  \|a\|_{L^{1}(I^{+}_{i})} \zeta(r) e^{|c||I^{+}_{i}|}|I^{+}_{i}|\bigr{)} 
\end{equation*}
and
\begin{equation*}
u'(\tau_{i}) >0 \geq  - \vartheta\lambda \|a\|_{L^{1}(I^{+}_{i})} \zeta(r)r e^{|c||I^{+}_{i}|}.
\end{equation*}
Thus, in any case, \eqref{eq-4.2tau} and \eqref{eq-4.3tau} hold, and so we can proceed with the second part of the proof.

\smallskip

\noindent
\textit{Step~2.} We consider the interval $I^{-}_{i}=\mathopen{[}\tau_{i},\sigma_{i+1}\mathclose{]}$.
Since the map $x\mapsto e^{cx} u'(x)$ is non-decreasing in $I^{-}_{i}$, from \eqref{eq-4.2tau} we have 
\begin{equation*}
u'(x) \geq e^{c(\tau_{i}-x)}u'(\tau_{i}) \geq - \vartheta \lambda \|a\|_{L^{1}(I^{+}_{i})} \zeta(r) r e^{|c||I^{+}_{i}\cup I^{-}_{i}|}, \quad \text{for all $x\in I^{-}_{i}$.}
\end{equation*}
Therefore, integrating on $\mathopen{[}\tau_{i},x\mathclose{]}$ and using \eqref{eq-4.3tau}, we have
\begin{align}\label{eq-4.9}
u(x) &= u(\tau_{i}) + \int_{\tau_{i}}^{x} u'(\xi) \,\mathrm{d}\xi\\
&\geq r \Bigl{(} 1 - \vartheta \lambda  \|a\|_{L^{1}(I^{+}_{i})} \zeta(r) e^{|c||I^{+}_{i}|}|I^{+}_{i}|- \vartheta \lambda \|a\|_{L^{1}(I^{+}_{i})} \zeta(r) e^{|c||I^{+}_{i}\cup I^{-}_{i}|}|I^{-}_{i}|
\Bigr{)} \\
&\geq r \Bigl{(}1- \lambda |I^{+}_{i} \cup I^{-}_{i}|\|a\|_{L^{1}(I^{+}_{i})} \zeta(r) e^{|c||I^{+}_{i}\cup I^{-}_{i}|}	 \Bigr{)}\\
&\geq r \Bigl{(}1- \lambda |I^{-}_{i-1}\cup I^{+}_{i} \cup I^{-}_{i}|\|a\|_{L^{1}(I^{+}_{i})} \zeta(r) e^{|c||I^{-}_{i-1} \cup I^{+}_{i}\cup I^{-}_{i}|}	 \Bigr{)}\\
&>\dfrac{r}{2}, \quad \text{for all $ x\in I^{-}_{i}$,}
\end{align}
where the last inequality follows from \eqref{cond-d}. On the other hand, integrating
\begin{equation*}
\bigl{(}e^{cx}u'(x)\bigr{)}' = \vartheta \mu a^{-}(x) g(u(x)) e^{cx}, \quad x \in I^{-}_{i},
\end{equation*}
on $\mathopen{[}\tau_{i},x\mathclose{]}$ and using \eqref{eq-4.2tau} and \eqref{eq-4.9}, we find
\begin{equation*}
\begin{aligned}
u'(x)
   &= u'(\tau_{i}) e^{c(\tau_{i}-x)} + \vartheta \mu\int_{\tau_{i}}^{x} a^{-}(\xi) g(u(\xi)) e^{c(\xi-x)}\,\mathrm{d}\xi
\\ &\geq \vartheta r \biggl{(}-\lambda \|a\|_{L^{1}(I^{+}_{i})} \zeta(r) e^{|c||I^{+}_{i}\cup I^{-}_{i}|}  + \dfrac{1}{2} \mu \gamma(r) A^{\mathrm{r}}_{i}(x)e^{-|c||I^{-}_{i}|} \biggr{)}, \quad \text{for all $x \in I^{-}_{i}$.}
\end{aligned}
\end{equation*}
In particular,
\begin{equation*}
u'(\sigma_{i+1}) \geq \vartheta r \biggl{(}\dfrac{1}{2} \mu \gamma(r) \|a\|_{L^{1}(I^{-}_{i})}e^{-|c||I^{-}_{i}|}  - \lambda \|a\|_{L^{1}(I^{+}_{i})} \zeta(r) e^{|c||I^{+}_{i}\cup I^{-}_{i}|}\biggr{)}.
\end{equation*}
Finally, a further integration and condition \eqref{eq-4.3tau} provide
\begin{equation*}
\begin{aligned}
u(\sigma_{i+1})
   &= u(\tau_{i}) + \int_{\tau_{i}}^{\sigma_{i+1}} u'(x) \,\mathrm{d}x
\\ &\geq r \biggl{(} 1 - \vartheta \lambda  \|a\|_{L^{1}(I^{+}_{i})} \zeta(r)e^{|c||I^{+}_{i}|}|I^{+}_{i}| 
\\ &\qquad -\vartheta \lambda \|a\|_{L^{1}(I^{+}_{i})} \zeta(r) e^{|c||I^{+}_{i}\cup I^{-}_{i}|}|I^{-}_{i}|+ \vartheta \dfrac{1}{2} \mu \gamma(r) \|A^{\mathrm{r}}_{i}\|_{L^{1}(I^{-}_{i})} e^{-|c||I^{-}_{i}|} \biggr{)}
\\ &\geq r \biggl{(} 1 - \vartheta \lambda  \|a\|_{L^{1}(I^{+}_{i})} \zeta(r)e^{|c||I^{-}_{i-1}\cup I^{+}_{i}\cup I^{-}_{i}|}|I^{-}_{i-1}\cup I^{+}_{i}\cup I^{-}_{i}|
\\ &\qquad+ \vartheta \dfrac{1}{2} \mu \gamma(r) \|A^{\mathrm{r}}_{i}\|_{L^{1}(I^{-}_{i})} e^{-|c||I^{-}_{i}|} \biggr{)}
\\ &\geq r \biggl{(} 1 + \dfrac{\vartheta}{2} \Bigl{(}  \mu \gamma(r) \|A^{\mathrm{r}}_{i}\|_{L^{1}(I^{-}_{i})} e^{-|c||I^{-}_{i}|} - 1 \Bigr{)} \biggr{)},
\end{aligned}
\end{equation*}
where the last inequality follows from~\eqref{cond-d}. Thus the proof is completed.
\end{proof}

\subsection{Some estimates for large solutions}\label{section-2.4}

We start by introducing the following auxiliary result.

\begin{lemma}\label{lem-Re}
 Let $c\in\mathbb{R}$. Let $g\colon\mathopen{[}0,1\mathclose{]}\to\mathbb{R}$ be a continuous function satisfying $(g_*)$ and $(g_{1})$. Let $J\subseteq \mathbb{R}$ be a closed interval and $b\in L^1(J)$. Then, for every $\varepsilon\in\mathopen{]}0,1\mathclose{[}$ there exists $R_{\varepsilon}=R_{\varepsilon}(c,g,J,b)\in\mathopen{]}0,1\mathclose{[}$ such that for every $\vartheta \in \mathopen{]}0,1\mathclose{]}$ and for every non-negative solution $u(x)$ of
\begin{equation*}
u'' + cu' +\vartheta b(x) g(u) = 0,
\end{equation*}
that satisfies $u(\hat{x})\geq R_\varepsilon$ and $u'(\hat{x})=0$ for some $\hat{x}\in J$, it holds that  
\begin{equation*}
u(x)\geq1-\varepsilon \quad\text{and}\quad |u'(x)| \leq \varepsilon, \quad \text{for all $x\in J$}. 
\end{equation*}
\end{lemma}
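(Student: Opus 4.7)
The plan is to exploit condition $(g_{1})$ to linearize the equation around $u\equiv 1$ and then propagate the smallness of the initial data via a Gronwall-type estimate. By $(g_{1})$, I can fix constants $K>0$ and $\delta_{0}\in\mathopen{]}0,\varepsilon\mathclose{[}$ such that $g(u)\leq K(1-u)$ for every $u\in\mathopen{[}1-\delta_{0},1\mathclose{]}$. The change of variable $w(x):=1-u(x)$ turns the ODE into
\[
w''+cw' = \vartheta b(x)\,g(1-w),
\]
with $w(\hat{x})\leq 1-R_{\varepsilon}$ and $w'(\hat{x})=0$, which is a small-data condition as soon as $R_{\varepsilon}$ is close to $1$.

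Next I would introduce $W(x):=|w(x)|+|w'(x)|$ and observe that, as long as $w(x)\in\mathopen{[}0,\delta_{0}\mathclose{]}$, the inequality $g(1-w)\leq Kw$ together with $\vartheta\leq 1$ yields
\[
|W'(x)|\leq (1+|c|)|w'(x)|+K|b(x)|\,w(x)\leq \bigl((1+|c|)+K|b(x)|\bigr)\,W(x).
\]
Since $b\in L^{1}(J)$, Gronwall's lemma then gives $W(x)\leq W(\hat{x})\,C$ on any subinterval of $J$ containing $\hat{x}$ on which $w$ remains in $\mathopen{[}0,\delta_{0}\mathclose{]}$, where
\[
C:=\exp\bigl((1+|c|)|J|+K\|b\|_{L^{1}(J)}\bigr)
\]
depends only on $c$, $g$, $J$, and $b$, as required.

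The conclusion will follow by a bootstrap argument. I would choose $R_{\varepsilon}$ so close to $1$ that $(1-R_{\varepsilon})\,C<\min\{\delta_{0},\varepsilon\}$, and let $J_{0}\subseteq J$ denote the maximal closed subinterval containing $\hat{x}$ on which $w\in\mathopen{[}0,\delta_{0}\mathclose{]}$. On $J_{0}$ the Gronwall bound forces $W(x)\leq(1-R_{\varepsilon})\,C<\delta_{0}$, strictly, ruling out the possibility $J_{0}\subsetneq J$ (otherwise $w$ would attain the value $\delta_{0}$ at an interior boundary point of $J_{0}$). Hence $J_{0}=J$, and the bounds $u(x)\geq 1-\varepsilon$ and $|u'(x)|\leq\varepsilon$ both follow from $W\leq(1-R_{\varepsilon})\,C\leq\varepsilon$ on $J$.

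The main subtlety I expect concerns the possibility that $u$ transiently exceeds $1$ on part of $J$. Indeed, in the degree setting of Section~\ref{section-2.1} the nonlinearity is continuously extended by zero past $u=1$, and in that region the equation degenerates to $u''+cu'=0$, so $(w,w')$ evolves by a bounded linear propagator with no forcing. This case is easily incorporated either by working with $w^{+}(x):=\max\{w(x),0\}$ in place of $w$, or by noting that the trivial bound $|g(1-w)|\leq K|w|$ persists for $w\leq 0$ (with the convention $g\equiv 0$ outside $\mathopen{[}0,1\mathclose{]}$). Either way, the same constant $C$ controls $W$ and the bootstrap closes exactly as above.
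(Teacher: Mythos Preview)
Your argument is correct; both proofs are Gronwall-type propagation estimates for a Lyapunov functional measuring the distance of $(u,u')$ from the equilibrium $(1,0)$. The execution, however, differs in two respects worth noting. First, the paper works with the smooth quadratic energy $V(x)=(1-u(x))^{2}+(u'(x))^{2}$ and bounds $|(\log V)'|$ directly, which sidesteps the non-differentiability issues that your choice $W=|w|+|w'|$ would need to address at zeros of $w'$ (a minor point, easily fixed with Dini derivatives or the integral Gronwall inequality). Second, and more substantively, the paper observes that $(g_{*})$ together with $(g_{1})$ and the continuity of $g$ yield the \emph{global} bound $g(u)\le K(1-u)$ for all $u\in[0,1]$, not merely in a neighborhood of $u=1$. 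With this, the differential inequality for $V$ holds on all of $J$ with no restriction on the range of $u$, so the bootstrap step in your argument becomes unnecessary and an explicit formula for $R_{\varepsilon}$ drops out immediately. Your local bound plus continuation argument is perfectly sound, but the global estimate is the cleaner route here. Your final paragraph about $u>1$ is not needed in this context: $g$ is only defined on $[0,1]$, and in every application of the lemma the solutions already satisfy $u<1$.
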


\begin{proof}
Given $\varepsilon\in\mathopen{]}0,1\mathclose{[}$, let us define 
\begin{equation*}
R_\varepsilon =1- {\varepsilon}{e^{-\frac{1}{2}{K\|b\|_{L^1(J)}+(1+2|c|)|J|}}}.
\end{equation*}
First of all we notice that either $u\equiv 1$ or $(1-u(x))^{2}+(u'(x))^{2}>0$ for every $x\in J$, due to the uniqueness of the solution of the Cauchy problem
\begin{equation*}
\begin{cases}
\, u'' + cu' +\vartheta b(x) g(u) = 0,\\
\, u(x_{0})=1,\\
\, u'(x_{0})=0,
\end{cases}
\end{equation*}
ensured by condition $(g_{1})$.

In the first case the thesis follows straightforwardly. In the second case, we compute 
\begin{align*}
&\frac{\mathrm{d}}{\mathrm{d}x} \log{\bigl{(}(1-u(x))^{2}+(u'(x))^{2}\bigr{)}} =\\
&=-2\;\frac{(1-u(x))u'(x)+\vartheta b(x) u'(x)g(u(x))+c (u'(x))^{2}}{(1-u(x))^{2}+(u'(x))^{2}}.
\end{align*}
From the previous equality and since by $(g_{1})$ we can fix $K>0$ such that $g(u)\leq K(1-u)$ for every $u\in\mathopen{[}0,1\mathclose{]}$, we deduce that 
\begin{align*}
&\biggl{|}\frac{\mathrm{d}}{\mathrm{d}x} \log{\bigl{(}(1-u(x))^{2}+(u'(x))^{2}\bigr{)}}\biggr{|} \leq\\
&\leq2\frac{(1-u(x))|u'(x)|+|b(x)| |u'(x)|g(u(x))+|c| (u'(x))^{2}}{(1-u(x))^{2}+(u'(x))^{2}}\\
&\leq 2\frac{(1+K|b(x)|)(1-u(x))|u'(x)|+|c| (u'(x))^{2}}{(1-u(x))^{2}+(u'(x))^{2}}\\
&\leq 1+K|b(x)|+2|c|.
\end{align*}
Hence, by an integration of the above inequality from $\hat{x}$ to an arbitrary $x\in J$, we have 
\begin{equation*}
\log{\frac{(1-u(x))^{2} +(u'(x))^{2}}{(1-u(\hat{x}))^{2} }}\leq  K\|b\|_{L^{1}(J)}+(1+2|c|)|J|.
\end{equation*}
As a consequence, it follows that 
\begin{equation*}
(1-u(x))^{2} +(u'(x))^{2}\leq (1-R_\varepsilon)^{2}e^{K\|b\|_{L^{1}(J)}+(1+2|c|)|J|}= \varepsilon^{2},
\end{equation*}
for all $x\in J$, and so the thesis is proved.
\end{proof}

The following lemma gives an upper bound for positive $P$-periodic solutions of~\eqref{eq-lem-deg1} which will be used to prove the existence result in Theorem~\ref{th-exis}.

\begin{lemma}\label{lem-R0}
Let $c\in\mathbb{R}$ and let $a \colon \mathbb{R} \to \mathbb{R}$ be a $P$-periodic locally integrable function satisfying $(a_{*})$. Let $g\colon\mathopen{[}0,1\mathclose{]}\to\mathbb{R}$ be a continuously differentiable function satisfying $(g_*)$. Let $\lambda>0$ and $\mu>\mu^{\#}(\lambda).$ Then, there exists $R_{0}\in\mathopen{]}0,1\mathclose{[}$ such that for every $\vartheta\in \mathopen{]}0,1\mathclose{]}$, every non-negative $P$-periodic solution $u(x)$ of~\eqref{eq-lem-deg1} satisfies $\|u\|_\infty< R_{0}$. \end{lemma}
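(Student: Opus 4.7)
The plan is to argue by contradiction. Assume the conclusion fails: then there exist sequences $(\vartheta_n)_n \subseteq \mathopen{]}0,1\mathclose{]}$ and $(u_n)_n$ of non-negative $P$-periodic solutions of \eqref{eq-lem-deg1} with $\vartheta=\vartheta_n$ such that $\|u_n\|_\infty \to 1$. Since $g$ is $\mathcal{C}^{1}$, condition $(g_1)$ holds; hence the Cauchy problem with data $(u(\bar{x}),u'(\bar{x}))=(1,0)$ admits $u\equiv 1$ as unique solution, and we may discard the case $u_n\equiv 1$ and assume $u_n(x) < 1$ for every $x$. Choose a maximum point $\hat{x}_n$ of $u_n$ and apply Lemma~\ref{lem-Re} on $J_n := \mathopen{[}\hat{x}_n - P,\hat{x}_n + P\mathclose{]}$: note that by $P$-periodicity, $\|a_{\lambda,\mu}\|_{L^{1}(J_n)} \leq 2\|a_{\lambda,\mu}\|_{L^{1}(0,P)}$ uniformly in $n$, so the corresponding constant $R_\varepsilon$ can be chosen independent of $n$. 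Thus, fixed any $\varepsilon \in \mathopen{]}0,1\mathclose{[}$, for $n$ large enough $u_n(x) \geq 1-\varepsilon$ and $|u_n'(x)| \leq \varepsilon$ on $J_n$, which by periodicity yields $u_n \to 1$ and $u_n' \to 0$ uniformly on $\mathbb{R}$.

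Since $u_n<1$ implies $g(u_n)>0$ everywhere, we introduce the change of variable $z_n := u_n'/(\vartheta_n\, g(u_n))$, which is $\mathcal{C}^{1}$ and $P$-periodic. As in the proof of Lemma~\ref{lem-r0}, a direct computation gives
\begin{equation*}
z_n'(x) + c\, z_n(x) + \vartheta_n\, g'(u_n(x))\, z_n^{2}(x) + a_{\lambda,\mu}(x) = 0.
\end{equation*}
Integrating on $\mathopen{[}0,P\mathclose{]}$, exploiting the periodicity of $z_n$ together with the telescoping identity $\int_0^P u_n'(x)/g(u_n(x))\,\mathrm{d}x = 0$ (a primitive of $1/g$ evaluated across a period), we reach the key relation
\begin{equation*}
\vartheta_n \int_{0}^{P} g'(u_n(x))\, z_n^{2}(x)\,\mathrm{d}x = -\int_{0}^{P} a_{\lambda,\mu}(x)\,\mathrm{d}x > 0,
\end{equation*}
the strict positivity on the right-hand side being granted by $\mu > \mu^{\#}(\lambda)$.

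The contradiction is obtained by a dichotomy on the sign of $g'(1)$, which is non-positive (because $g(1)=0$ and $g>0$ on $\mathopen{]}0,1\mathclose{[}$, via the mean value theorem). If $g'(1) < 0$, the continuity of $g'$ together with the uniform convergence $u_n \to 1$ yields $g'(u_n(x)) \leq g'(1)/2 < 0$ for every $x$ and $n$ large; then the left-hand side of the identity above is non-positive, contradicting its positivity. If instead $g'(1) = 0$, then $\|g'(u_n)\|_\infty \to 0$ uniformly; repeating the Gronwall-type argument carried out in the proof of Lemma~\ref{lem-r0} (which hinges exactly on $\vartheta_n\|g'(u_n)\|_\infty \to 0$) we bound $\|z_n\|_\infty \leq M$ for any fixed $M > e^{|c|P}\|a_{\lambda,\mu}\|_{L^{1}(0,P)}$ and $n$ large, whence
\begin{equation*}
\biggl|\, \vartheta_n \int_{0}^{P} g'(u_n)\, z_n^{2}\,\mathrm{d}x \,\biggr| \leq \|g'(u_n)\|_\infty\, P\, M^{2} \to 0,
\end{equation*}
again contradicting the strict positivity of $-\int_0^P a_{\lambda,\mu}$. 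The main obstacle I foresee is precisely the degenerate subcase $g'(1) = 0$: the sign of $g'(u_n)$ is no longer decisive, and one must first secure the uniform $L^{\infty}$-bound on $z_n$ from the $z_n$-equation, mirroring the argument used at $u=0$ in Lemma~\ref{lem-r0}.
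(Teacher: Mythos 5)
Your proposal is correct and follows essentially the same route as the paper: contradiction via a sequence $\|u_n\|_\infty\to 1$, uniform convergence $u_n\to 1$ from Lemma~\ref{lem-Re}, the Riccati-type change of variable $z_n=u_n'/(\vartheta_n g(u_n))$ integrated over a period against $\mu>\mu^{\#}(\lambda)$, and the same dichotomy on $g'(1)$ with the Lemma~\ref{lem-r0}-style bound on $\|z_n\|_\infty$ in the degenerate case. The only (harmless) cosmetic differences are your choice of the window $\mathopen{[}\hat{x}_n-P,\hat{x}_n+P\mathclose{]}$ instead of $\mathopen{[}0,P\mathclose{]}$ when invoking Lemma~\ref{lem-Re}, and your making explicit the telescoping cancellation of the $c\,z_n$ term.
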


\begin{proof}
By contradiction we assume that there exists a sequence $(u_{n}(x))_{n}$ of non-negative $P$-periodic solutions of \eqref{eq-lem-deg1} for $\vartheta=\vartheta_{n}\in\mathopen{]}0,1\mathclose{]}$ such that $\|u_{n}\|_{\infty}\to1^{-}$. 

By applying Lemma~\ref{lem-Re} with the choice of $J=\mathopen{[}0,P\mathclose{]}$ and $b(x)=a_{\lambda,\mu}(x)$, we deduce that $u_{n}(x)\to1$ uniformly in $x$ as $n\to\infty$. 

Through the change of variable introduced in \eqref{change-var} and an integration of \eqref{eq-zn} on $\mathopen{[}0,P\mathclose{]}$ we have
\begin{equation}\label{eq-cc}
0> \int_{0}^{P} a_{\lambda,\mu}(x) \,\mathrm{d}x=-\vartheta_{n} \int_{0}^{P} g'(u_{n}(x))z_{n}^{2}(x)\,\mathrm{d}x.
\end{equation}
When $g'(1)<0$ we deduce that $g'(u)<0$ for every $u$ in a left neighborhood of~$1$. In this case, a contradiction follows from \eqref{eq-cc} by the uniform convergence of $u_{n}(x)$ to~$1$. When $g'(1)=0$, a contradiction is reached because, by arguing as in Lemma~\ref{lem-r0}, the sequence $(z_{n}(x))_{n}$ is uniformly bounded and $g'(u_{n}(x))$ converges to~$0$ uniformly.
\end{proof}

The next lemma gives us some estimates for positive solutions of~\eqref{eq-lem-deg1} which will be used to prove the multiplicity result in Theorem~\ref{th-mult}. To state it, we recall the definition of $A^{\mathrm{r}}_{i}(x)$ and $A^{\mathrm{l}}_{i}(x)$ given in \eqref{eq-AB} and we introduce the further notation
\begin{equation*}
\Gamma(d) := \max_{0 \leq u \leq d} g(u), \qquad
\chi(d,D) := \min_{d \leq u \leq D} g(u),
\end{equation*}
where $d,D\in\mathopen{]}0,1\mathclose{[}$ satisfy $d<D$.

\begin{lemma}\label{lem-R}
Let $c\in\mathbb{R}$ and let $a \colon \mathbb{R} \to \mathbb{R}$ be a $P$-periodic locally integrable function satisfying $(a_{*})$. Let $g\colon\mathopen{[}0,1\mathclose{]}\to\mathbb{R}$ be a continuous function satisfying $(g_*)$ and $(g_{1})$. Let $\lambda>0$ and $d\in\mathopen{]}0,1\mathclose{[}$. Then, there exists $\bar{R}=\bar{R}(d)\in\mathopen{]}d,1\mathclose{[}$ such that for every $R\in\mathopen{[}\bar{R},1\mathclose{[}$, $\vartheta\in\mathopen{]}0,1\mathclose{]}$ and $\mu>0$, if $u(x)$ is a non-negative solution of~\eqref{eq-lem-deg1} defined in $I^{-}_{i-1}\cup I^{+}_{i} \cup I^{-}_{i}$ for some $i\in \{1,\ldots,m\}$ with $\max_{x\in I^{-}_{i-1}\cup I^{+}_{i} \cup I^{-}_{i}} u(x) = \max_{x\in I^{+}_{i}} u(x) = R$ it holds that
\begin{alignat*}{1}
&u(\sigma_{i+1}) \geq R + \vartheta \biggl{(} \mu  \|A^{\mathrm{r}}_{i}\|_{L^{1}(I^{-}_{i})} \chi(d,R) e^{-|c||I^{-}_{i}|} 
\\ & \hspace{100pt} -\lambda  \|a\|_{L^{1}(I^{+}_{i})} \Gamma(R)e^{|c||I^{+}_{i}\cup I^{-}_{i}|}|I^{+}_{i}\cup I^{-}_{i}| \biggr{)}
\shortintertext{and}
&u(\tau_{i-1}) \geq R + \vartheta \biggl{(} \mu  \|A^{\mathrm{l}}_{i-1}\|_{L^{1}(I^{-}_{i-1})} \chi(d,R) e^{-|c||I^{-}_{i-1}|} 
\\ & \hspace{100pt} -\lambda  \|a\|_{L^{1}(I^{+}_{i})} \Gamma(R)e^{|c||I^{-}_{i-1}\cup I^{+}_{i}|}||I^{-}_{i-1}\cup I^{+}_{i}| \biggr{)}.
\end{alignat*}
\end{lemma}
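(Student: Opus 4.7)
The plan is to follow the same two-step pattern as the proof of Lemma~\ref{lem-r}: first obtain control on $u(\tau_{i})$ and $u'(\tau_{i})$ by integrating the equation on $I^{+}_{i}$, and then integrate twice on $I^{-}_{i}$ using the monotonicity of $x \mapsto e^{cx} u'(x)$, which holds because $(e^{cx} u')' = \vartheta \mu a^{-}(x) g(u) e^{cx} \geq 0$ on $I^{-}_{i}$. I fix $\hat{x}_{i} \in I^{+}_{i}$ at which the maximum is attained; since the hypothesis forces this to be a global maximum over $I^{-}_{i-1} \cup I^{+}_{i} \cup I^{-}_{i}$ and $\hat{x}_{i}$ lies in the interior of the latter set, $u'(\hat{x}_{i}) = 0$. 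A standard integration on $I^{+}_{i}$ using $g(u) \leq \Gamma(R)$ (valid since $u \leq R$) then yields the coarse bounds
\begin{equation*}
u(\tau_{i}) \geq R - \vartheta \lambda \|a\|_{L^{1}(I^{+}_{i})} \Gamma(R) e^{|c||I^{+}_{i}|} |I^{+}_{i}|, \quad |u'(\tau_{i})| \leq \vartheta \lambda \|a\|_{L^{1}(I^{+}_{i})} \Gamma(R) e^{|c||I^{+}_{i}|}.
\end{equation*}

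The main obstacle is ensuring $u(x) \geq d$ on $I^{-}_{i}$ (and, for the symmetric bound, on $I^{-}_{i-1}$), because only under this condition can the decisive lower bound $g(u(x)) \geq \chi(d,R)$ be used in the ensuing integration on $I^{-}_{i}$. The coarse estimate above does \emph{not} imply this, since $\Gamma(R)$ need not vanish as $R \to 1^{-}$. My remedy is to invoke Lemma~\ref{lem-Re} applied to the equation restricted to $J = I^{+}_{i}$ with $b(x) = \lambda a^{+}(x)$; since $a^{-} \equiv 0$ on $I^{+}_{i}$, the threshold $R_{\varepsilon}$ produced by the lemma depends only on $c$, $g$, $\lambda$, and $I^{+}_{i}$, and in particular is \emph{independent of $\mu$}. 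For any $\varepsilon \in \mathopen{]}0,1\mathclose{[}$ and every $R \geq R_{\varepsilon}$ the lemma gives $u \geq 1 - \varepsilon$ and $|u'| \leq \varepsilon$ throughout $I^{+}_{i}$. Using the monotonicity of $e^{cx} u'$ on each $I^{-}_{j}$, $j \in \{i-1, i\}$, together with $|u'(\sigma_{i})|, |u'(\tau_{i})| \leq \varepsilon$, a short calculation upgrades this to $u(x) \geq 1 - \varepsilon(1 + e^{|c||I^{-}_{j}|}|I^{-}_{j}|)$ on $I^{-}_{j}$. Choosing $\varepsilon = \varepsilon(d) > 0$ small enough that this quantity exceeds $d$ uniformly over $i$ and setting $\bar{R}(d) := \max_{i} R_{\varepsilon}$ then delivers $u(x) \in \mathopen{[}d, R\mathclose{]}$ throughout $I^{-}_{i-1} \cup I^{+}_{i} \cup I^{-}_{i}$ whenever $R \in \mathopen{[}\bar{R}, 1\mathclose{[}$.

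With this lower bound in place, the conclusion is a direct integration parallel to the final computation in the proof of Lemma~\ref{lem-r}. On $I^{-}_{i}$, monotonicity of $e^{cx} u'$ combined with the coarse bound on $u'(\tau_{i})$ and with $g(u) \geq \chi(d,R)$ produces
\begin{equation*}
u'(x) \geq -\vartheta \lambda \|a\|_{L^{1}(I^{+}_{i})} \Gamma(R) e^{|c||I^{+}_{i} \cup I^{-}_{i}|} + \vartheta \mu \chi(d,R) e^{-|c||I^{-}_{i}|} A^{\mathrm{r}}_{i}(x), \quad x \in I^{-}_{i};
\end{equation*}
integrating this from $\tau_{i}$ to $\sigma_{i+1}$ and adding the lower bound on $u(\tau_{i})$ collects the two $\Gamma(R)$-contributions into the common factor $|I^{+}_{i} \cup I^{-}_{i}|$, yielding exactly the announced estimate for $u(\sigma_{i+1})$. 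The bound on $u(\tau_{i-1})$ follows by the symmetric integration on $I^{-}_{i-1}$ starting from $\sigma_{i}$, with $A^{\mathrm{l}}_{i-1}$ in place of $A^{\mathrm{r}}_{i}$.
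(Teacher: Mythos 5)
Your proposal is correct and follows essentially the same route as the paper's proof: the decisive step of applying Lemma~\ref{lem-Re} on $J=I^{+}_{i}$ with $b=\lambda a^{+}$ to get $u\geq 1-\varepsilon$ and $|u'|\leq\varepsilon$ there, then propagating via the monotonicity of $x\mapsto e^{cx}u'(x)$ to obtain $u\geq d$ on the adjacent negativity intervals with $\varepsilon$ chosen exactly so that $1-\varepsilon(1+|I^{-}_{j}|e^{|c||I^{-}_{j}|})\geq d$, is precisely what the paper does. The remaining integrations on $I^{+}_{i}$ and $I^{-}_{i}$ also match the paper's computation.
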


\begin{proof}
Given $d>0$, let us take
\begin{equation*}
\varepsilon=\frac{1-d}{\displaystyle 1+\max_{i=1,\ldots,m} {|I^{-}_{i}| e^{|c| |I^{-}_{i}|}}}.
\end{equation*}
We apply Lemma~\ref{lem-Re} with the choice of $J=I^{+}_{i}$ and $b(x)=\lambda a^{+}(x)$ in order to find the corresponding $R_{\varepsilon,i}=R_{\varepsilon}(c,g,I^{+}_{i},\lambda a^{+})$ and we set
\begin{equation*}
\bar{R} = \bar{R}(d) = \max_{i=1,\ldots,m} R_{\varepsilon,i}.
\end{equation*}
Notice that $1-\varepsilon > d$. Therefore, since $R_{\varepsilon,i}\in\mathopen{]}1-\varepsilon,\varepsilon\mathclose{[}$, it holds that $\bar{R}\in\mathopen{]}d,1\mathclose{[}$.

Let $R\in\mathopen{[}\bar{R},1\mathclose{[}$, $\vartheta\in\mathopen{]}0,1\mathclose{]}$ and $\mu>0$. Let $u(x)$ be a non-negative solution of~\eqref{eq-lem-deg1} defined in $I^{-}_{i-1}\cup I^{+}_{i} \cup I^{-}_{i}$ for some $i\in \{1,\ldots,m\}$ with 
\begin{equation*}
\max_{x\in I^{-}_{i-1}\cup I^{+}_{i} \cup I^{-}_{i}} u(x) = \max_{t\in I^{+}_{i}} u(x) = R.
\end{equation*}

Let $\hat{x}_{i}\in I^{+}_{i}$ be such that $u(\hat{x}_{i})=\max_{x\in I^{+}_{i}} u(x) = R$. We observe that $u'(\hat{x}_{i}) = 0$, otherwise $u(x)>R$ for some $x$ in a neighborhood of $\hat{x}_{i}$.
Lemma~\ref{lem-Re} applies and yields
\begin{equation}\label{eq-epsR}
u(x)\geq1-\varepsilon \quad\text{and}\quad |u'(x)| \leq \varepsilon, \quad \text{for all $x\in I^{+}_{i}$}. 
\end{equation}
We claim that
\begin{equation*}
u(x)\geq d, \quad \text{for all $x\in I^{-}_{i-1}\cup I^{+}_{i} \cup I^{-}_{i}$.}
\end{equation*}
The inequality in $I^{+}_{i}$ is obvious since 
$1-\varepsilon>d$. As for the interval $I^{-}_{i}$, since the map $x\mapsto e^{cx} u'(x)$ is non-decreasing, we have $e^{cx} u'(x) \geq e^{c\tau_{i}} u'(\tau_{i})$, for all $x\in I^{-}_{i}$.
Thus, from \eqref{eq-epsR} it follows that
\begin{equation*}
|u'(x)| \leq \varepsilon e^{|c||I^{-}_{i}|}, \quad \text{for all $x\in I^{-}_{i}$.}
\end{equation*}
Then, an integration gives
\begin{equation*}
u(x) = u(\tau_{i}) + \int_{\tau_{i}}^{x} u'(\xi) \,\mathrm{d}\xi \geq 1-\varepsilon-\varepsilon |I^{-}_{i}| e^{|c||I^{-}_{i}|} \geq d, \quad \text{for all $x\in I^{-}_{i}$,}
\end{equation*}
where the last inequality follows from the choice of $\varepsilon$.
A similar argument applies in the interval $I^{-}_{i-1}$ and the claim is thus proved.

Recalling that $u(\hat{x}_{i}) = 0$, we find
\begin{equation*}
u'(x) = -\vartheta \lambda \int_{\hat{x}_{i}}^{x} a^{+}(\xi) g(u(\xi)) e^{c(\xi-x)} \,\mathrm{d}\xi, \quad \text{for all $x\in I^{+}_{i}$,}
\end{equation*}
implying
\begin{equation*}
|u'(x)| \leq \vartheta \lambda \|a\|_{L^{1}(I^{+}_{i})} \Gamma(R) e^{|c||I^{+}_{i}|}, \quad \text{for all $x\in I^{+}_{i}$.}
\end{equation*}
Therefore
\begin{equation*}
u(\tau_{i}) = u(\hat{x}_{i}) + \int_{\hat{x}_{i}}^{\tau_{i}} u'(\xi)\,\mathrm{d}\xi  \geq R - \vartheta \lambda \|a\|_{L^{1}(I^{+}_{i})} \Gamma(R) e^{|c||I^{+}_{i}|}|I^{+}_{i}|.
\end{equation*}
As a consequence, in the interval $I^{-}_{i}$ we have
\begin{equation*}
\begin{aligned}
u'(x)
   &= u'(\tau_{i}) e^{c(\tau_{i}-x)} + \vartheta \mu\int_{\tau_{i}}^{x} a^{-}(\xi) g(u(\xi)) e^{c(\xi-x)}\,\mathrm{d}\xi
\\ &\geq - \vartheta \lambda \|a\|_{L^{1}(I^{+}_{i})} \Gamma(R) e^{|c||I^{+}_{i}\cup I^{-}_{i}|}  + \vartheta \mu A^{\mathrm{r}}_{i}(x)\chi(d,R) e^{-|c||I^{-}_{i}|}, \quad \text{for all $x \in I^{-}_{i}$.}
\end{aligned}
\end{equation*}
An integration of the above inequality, together with the estimate for $u(\tau_{i})$, finally provides
\begin{equation*}
\begin{aligned}
u(\sigma_{i+1})
   &= u(\tau_{i}) + \int_{\tau_{i}}^{\sigma_{i+1}} u'(x) \,\mathrm{d}x
\\ &\geq R - \vartheta \lambda  \|a\|_{L^{1}(I^{+}_{i})} \Gamma(R) e^{|c||I^{+}_{i}|}|I^{+}_{i}| 
\\ &\quad -\vartheta \lambda \|a\|_{L^{1}(I^{+}_{i})} \Gamma(R) e^{|c||I^{+}_{i}\cup I^{-}_{i}|}|I^{-}_{i}|+ \vartheta \mu \|A^{\mathrm{r}}_{i}\|_{L^{1}(I^{-}_{i})} \chi(d,R) e^{-|c||I^{-}_{i}|}
\\ &\geq R + \vartheta \biggl{(} \mu  \|A^{\mathrm{r}}_{i}\|_{L^{1}(I^{-}_{i})} \chi(d,R) e^{-|c||I^{-}_{i}|} 
\\ & \qquad -\lambda  \|a\|_{L^{1}(I^{+}_{i})} \Gamma(R)e^{|c||I^{+}_{i}\cup I^{-}_{i}|}|I^{+}_{i}\cup I^{-}_{i}| \biggr{)}
\end{aligned}
\end{equation*}
where the last inequality follows from~\eqref{cond-d}. Thus the proof is completed.
\end{proof}

\begin{remark}\label{rem-2.1}
Lemma~\ref{lem-R} will be exploited in Section~\ref{section-4.1}, while verifying the assumptions of Lemma~\ref{lem-deg0} and 
Lemma~\ref{lem-deg1}. We stress that only the assertion on $u(\sigma_{i+1})$ will be used. The second one plays a role in the corresponding proofs dealing with Dirichlet or Neumann boundary conditions (see Section~\ref{section-6.2}).
$\hfill\lhd$
\end{remark}

\section{Existence of two solutions}\label{section-3}

In this section we give the proof of Theorem~\ref{th-exis}.

\begin{proof}[Proof of Theorem~\ref{th-exis}.]
Given $\rho>0$, we first apply Lemma~\ref{lem-rho} in order to find the constant $\lambda^{*} =\lambda^{*} (\rho) > 0$ (defined as in \eqref{eq-lambdastar}). Then we fix $\lambda > \lambda^{*}$. 

We claim that Corollary~\ref{cor-deg0} applies with the choice of $d=\rho$ and $v(x)$ as the indicator function $\mathbbm{1}_{\bigcup_{i} I^{+}_{i}}(x)$ of the set $\bigcup_{i} I^{+}_{i}$, that is,
\begin{equation*}
v(x) =
\begin{cases}
\, 1, & \text{if $x \in \bigcup_{i = 1}^m I^{+}_{i}$,} \\
\, 0, & \text{if $x \in \mathopen{[}0,P\mathclose{]} \setminus \bigcup_{i=1}^m I^{+}_{i}$.}
\end{cases}
\end{equation*}
First, we verify assumption $(\widetilde{H}_{1})$. From property \eqref{eq-flower}, since $v(x) = 0$ for all $x \in \bigcup_{i} I^{-}_{i}$, we observe that any non-negative $P$-periodic solution of \eqref{eq-lem-deg0} attains its maximum on $\bigcup_{i} I^{+}_{i}$. Then, $(\widetilde{H}_{1})$ follows from Lemma~\ref{lem-rho}. 
As for assumption $(\widetilde{H}_{2})$, we integrate equation \eqref{eq-lem-deg0} on $\mathopen{[}0,P\mathclose{]}$ and pass to the absolute value in order to obtain
\begin{equation*}
\alpha \| v \|_{L^{1}(0,P)} \leq \| a_{\lambda,\mu} \|_{L^{1}(0,P)} \max_{u \in \mathopen{[}0,\rho\mathclose{]}} g(u).
\end{equation*}
Therefore, $(\widetilde{H}_{2})$ follows for $\alpha$ sufficiently large. Summing up, from Corollary~\ref{lem-deg0}, we thus obtain
\begin{equation*}
\mathrm{D}_{L}(L-N_{\lambda,\mu},B_{\rho}) = 0.
\end{equation*}

Now, we use Lemma~\ref{lem-r0} and Lemma~\ref{lem-R0} to fix $r_{0}$ and $R_{0}$ in $ \mathopen{]}0,1\mathclose{[}$. Without loss of generality we can assume $0<r_{0} < \rho < R_{0} < 1$. Then, Corollary~\ref{cor-deg1} applies both with the choice of $d=r_{0}$ and $d=R_{0}$ (indeed, $(\widetilde{H}_{3})$ is trivially satisfied). Therefore, we have
\begin{equation*}
\mathrm{D}_{L}(L-N_{\lambda,\mu},B_{r_{0}}) = 1 \quad \text{ and } \quad 
\mathrm{D}_{L}(L-N_{\lambda,\mu},B_{R_{0}}) = 1.
\end{equation*}

The additivity property of the coincidence degree implies
\begin{equation*}
\mathrm{D}_{L}(L-N_{\lambda,\mu},B_{\rho} \setminus \overline{B_{r_{0}}}) = -1
\quad \text{ and } \quad \mathrm{D}_{L}(L-N_{\lambda,\mu},B_{R_{0}} \setminus \overline{B_{\rho}}) = 1.
\end{equation*}
As a consequence, there exist a $P$-periodic solution $u_{s}(x)$ of \eqref{eq-flm} in $B_{\rho} \setminus \overline{B_{r_{0}}}$ as well as a $P$-periodic solution $u_{\ell}(x)$ of \eqref{eq-flm} in $B_{R_{0}} \setminus \overline{B_{\rho}}$. As observed in Section~\ref{section-2.1}, by the maximum principle it holds that $u_{s}(x) \geq 0$ and $u_{\ell}(x) \geq 0$ for all $x\in\mathopen{[}0,P\mathclose{]}$. Moreover, we clearly have $u_{s}(x) <1$ and $u_{\ell}(x) <1$ for all $x\in\mathopen{[}0,P\mathclose{]}$. Hence, $u_{s}(x)$ and $u_{\ell}(x)$ are non-negative $P$-periodic solutions of $(\mathscr{E}_{\lambda,\mu})$.
Since $g(u)$ is of class $\mathcal{C}^1$, the uniqueness of the constant zero solution for the Cauchy problem associated with $(\mathscr{E}_{\lambda,\mu})$, implies that $u_{s}(x)$ and $u_{\ell}(x)$ are positive $P$-periodic solutions of $(\mathscr{E}_{\lambda,\mu})$ and the proof is concluded.
\end{proof}

\begin{remark}
By a careful checking of the proof, one can realize that Theorem~\ref{th-exis} is still valid if $g(u)$ is assumed to be continuously differentiable in a right neighborhood of $u = 0$ and in a left neighborhood of $u=1$. We also remark that the assumption of 
differentiability near $u = 0$ could be removed, provided one supposes a condition of regular oscillation, that is,
\begin{equation*}
\lim_{\substack{u \to 0^{+} \\ \omega \to 1}} \frac{g(\omega u)}{g(u)} = 1
\end{equation*}
(cf.~\cite[Section~4.3]{BoFeZa-16}). At last, we mention that, by arguing as in \cite{BoFeZa-16}, one could also weaken assumption $(a_*)$, so as to cover some situations when the weight function $a(x)$ changes sign infinitely many times. For the sake of briefness,
and since assumption $(a_*)$ is crucial in the proof of Theorem~\ref{th-mult}, we have preferred to work in a unified simpler setting.
$\hfill\lhd$
\end{remark}

We end this section by stating the following straightforward corollary, dealing with the one-parameter equation \eqref{eq-lambda}.

\begin{corollary}\label{cor-exis}
Let $c\in\mathbb{R}$ and let $a \colon \mathbb{R} \to \mathbb{R}$ be a $P$-periodic locally integrable function satisfying $(a_{*})$ and $\int_{0}^{P} a(x) \,\mathrm{d}x <0$. Let $g \colon \mathopen{[}0,1\mathclose{]} \to \mathbb{R}$ be a continuously differentiable function satisfying $(g_{*})$ and $(g_{0})$.
Then, there exists $\lambda^{*} > 0$ (depending on $c$, $g(u)$ and $a^{+}(x)$, but not on $a^{-}(x)$) such that for every $\lambda > \lambda^{*}$ equation
\eqref{eq-lambda} has at least two non-constant positive $P$-periodic solutions.
\end{corollary}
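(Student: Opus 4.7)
The plan is to reduce the statement directly to Theorem~\ref{th-exis} by taking $\mu=\lambda$. First I would observe that
\begin{equation*}
a_{\lambda,\lambda}(x) = \lambda a^{+}(x)-\lambda a^{-}(x) = \lambda a(x),
\end{equation*}
so that equation \eqref{eq-lambda} coincides with $(\mathscr{E}_{\lambda,\mu})$ in the particular case $\mu=\lambda$. The compatibility condition $\mu>\mu^{\#}(\lambda)$ required by Theorem~\ref{th-exis}, upon setting $\mu=\lambda$, reads $\lambda>\lambda\int_{0}^{P}a^{+}(x)\,\mathrm{d}x/\int_{0}^{P}a^{-}(x)\,\mathrm{d}x$, which is equivalent to $\int_{0}^{P}a^{+}(x)\,\mathrm{d}x<\int_{0}^{P}a^{-}(x)\,\mathrm{d}x$, that is, $\int_{0}^{P}a(x)\,\mathrm{d}x<0$. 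This is precisely the standing assumption of the corollary.

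Next I would fix an arbitrary $\rho\in\mathopen{]}0,1\mathclose{[}$ and let $\lambda^{*}=\lambda^{*}(\rho)>0$ be the constant provided by Theorem~\ref{th-exis}. For every $\lambda>\lambda^{*}$, the diagonal pair $(\lambda,\mu)=(\lambda,\lambda)$ satisfies simultaneously $\lambda>\lambda^{*}$ and $\mu>\mu^{\#}(\lambda)$, so Theorem~\ref{th-exis} produces two non-constant positive $P$-periodic solutions $u_{s}(x)$ and $u_{\ell}(x)$ of equation \eqref{eq-lambda} satisfying $0<\|u_{s}\|_{\infty}<\rho<\|u_{\ell}\|_{\infty}<1$.

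For the parenthetical claim on the dependence of $\lambda^{*}$, I would inspect the explicit formula \eqref{eq-lambdastar} obtained in the proof of Lemma~\ref{lem-rho}: the quantities entering the definition of $\lambda^{*}(\rho)$ are $c$, the intervals $I^{+}_{i}$, the constant $\eta=\min\{g(u)\colon u\in\mathopen{[}\delta_{i}\rho,\rho\mathclose{]}\}$, and the integrals of $a(x)$ over subsets of $I^{+}_{i}$, where $a$ agrees with $a^{+}$; no quantity involving $a^{-}(x)$ ever appears. Hence $\lambda^{*}$ depends only on $c$, $g(u)$, and $a^{+}(x)$. No genuine obstacle arises here: the corollary is a direct specialization of Theorem~\ref{th-exis} to the diagonal case $\mu=\lambda$, and the role of the sign condition $\int_{0}^{P}a(x)\,\mathrm{d}x<0$ is exactly to ensure that this diagonal specialization falls within the admissible parameter region.
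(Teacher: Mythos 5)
Your proof is correct and matches the paper's intended argument exactly: the corollary is obtained by specializing Theorem~\ref{th-exis} to $\mu=\lambda$, with the observation that $\mu^{\#}(\lambda)<\lambda$ is equivalent to $\int_{0}^{P}a(x)\,\mathrm{d}x<0$ and that the constant $\lambda^{*}$ from \eqref{eq-lambdastar} involves only $c$, $g(u)$ and integrals of $a(x)$ over the positivity intervals. Nothing is missing.
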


\section{High multiplicity of solutions}\label{section-4}

In this section we give the proof of Theorem~\ref{th-mult}. 

\begin{proof}[Proof of Theorem~\ref{th-mult}.]
Given $\rho>0$, we first apply Lemma~\ref{lem-rho} in order to find the constant $\lambda^{*} =\lambda^{*} (\rho) > 0$ (defined as in \eqref{eq-lambdastar}). Then we fix $\lambda > \lambda^{*}$. 

We apply Lemma~\ref{lem-r} to find $\bar{r}\in\mathopen{]}0,1\mathclose{[}$ and we fix 
\begin{equation*}
r\in\mathopen{]}0,\min\{\bar{r},\rho\}\mathclose{[}.
\end{equation*}
Moreover, we apply Lemma~\ref{lem-R}, with the choice of $d=\rho$, to find $\bar{R}\in\mathopen{]}\rho,1\mathclose{[}$  and we fix
\begin{equation*}
R\in\mathopen{[}\bar{R},1\mathclose{[}.
\end{equation*}

We claim that there exists $\mu^{*}(\lambda) =\mu^{*}(\lambda,r,R)>0$ such that for every $\mu>\mu^{*}(\lambda)$ Lemma~\ref{lem-deg0} and Lemma~\ref{lem-deg1} hold for any pair of
subsets of indices $\mathcal{I},\mathcal{J} \subseteq \{1,\ldots,m\}$ with
$\mathcal{I} \cap \mathcal{J} = \emptyset$. This is a long technical step of the proof and we provide the details in Section~\ref{section-4.1}.
Once this is proved, we have that 
\begin{equation}\label{eq-3.1}
\mathrm{D}_{L}\bigl{(}L-N_{\lambda,\mu},\Omega^{\mathcal{I},\mathcal{J}}_{(r,\rho,R)} \bigr{)} =
\begin{cases}
\, 0, & \text{if } \;\mathcal{I} \neq \emptyset, \\
\, 1, & \text{if } \;\mathcal{I} = \emptyset.
\end{cases}
\end{equation}

We define the open and bounded sets
\begin{equation*}
\Lambda^{\mathcal{I},\mathcal{J}}_{(r,\rho,R)} :=
\left\{ u \in X \colon \|u\|_\infty<1,
\begin{array}{l}
 \max_{I^{+}_{i}}|u|<r, \; i\in\{1,\ldots,m\}\setminus(\mathcal{I}\cup\mathcal{J})
\\
r<\max_{I^{+}_{i}}|u|<\rho, \; i\in\mathcal{I}
\\
\rho<\max_{I^{+}_{i}}|u|<R, \; i\in\mathcal{J}
\end{array} \right\}
\end{equation*}
and so from \eqref{eq-3.1} and the combinatorial argument in \cite[Appendix~A]{BoFeZa-18tams}, we obtain that
\begin{equation*}
\mathrm{D}_{L}\bigl{(}L-N_{\lambda,\mu},\Lambda^{\mathcal{I},\mathcal{J}}_{(r,\rho,R)}\bigr{)} =
(-1)^{\# \mathcal{I}}.
\end{equation*}

As a consequence of the existence property for the coincidence degree, we thus obtain the existence of a $P$-periodic solution of \eqref{eq-flm} in each of these $3^{m}$ sets $\Lambda^{\mathcal{I},\mathcal{J}}_{(r,\rho,R)}$. Here, the number $3^{m}$ comes from all the possible choices $\mathcal{I}$ and $\mathcal{J}$ with $\mathcal{I} \cap \mathcal{J} = \emptyset$.
Notice that, since the identically zero function is contained in the set $\Lambda^{\emptyset,\emptyset}_{(r,\rho,R)}$, we do not consider it in the sequel.
Instead, every solution $u(x)$ of \eqref{eq-flm} in each of the other $3^{m}-1$ sets is non-constant and, by the maximum principle, such that $u(x)\geq 0$ for all $x\in\mathopen{[}0,P\mathclose{]}$. By the uniqueness of the zero solution for the Cauchy problem associated with \eqref{eq-flm} (coming from condition $(g_{0})$) we have also $u(x)> 0$ for all $x\in\mathopen{[}0,P\mathclose{]}$. Moreover, by construction, it follows that $u(x)<1$ for all $x\in\mathopen{[}0,P\mathclose{]}$. Hence, $u(x)$ is a non-constant positive $P$-periodic solution of $(\mathscr{E}_{\lambda,\mu})$.

Summing up, for each choice of $\mathcal{I}$ and $\mathcal{J}$
with $\mathcal{I} \cap \mathcal{J} = \emptyset \neq \mathcal{I} \cup \mathcal{J}$, there exists at least one positive $P$-periodic solution $u_{\mathcal{I},\mathcal{J}}(x)$ of $(\mathscr{E}_{\lambda,\mu})$ such that
\begin{itemize}
\item $0 < \max_{x\in I^{+}_{i}} u_{\mathcal{I},\mathcal{J}}(x) < r$, for all $i \notin \mathcal{I} \cup \mathcal{J}$;
\item $r < \max_{x\in I^{+}_{i}} u_{\mathcal{I},\mathcal{J}}(x) < \rho$, for all $i \in \mathcal{I}$;
\item $\rho < \max_{x\in I^{+}_{i}} u_{\mathcal{I},\mathcal{J}}(x) < R$, for all $i \in \mathcal{J}$.
\end{itemize}
Finally, to achieve the conclusion of Theorem~\ref{th-mult}, we observe that,
given any finite string $\mathcal{S} = (\mathcal{S}_{1},\ldots,\mathcal{S}_{m}) \in \{0,1,2\}^{m}$, with $\mathcal{S} \neq (0,\ldots,0)$,
we can establish a one-to-one correspondence between $\mathcal{S}$ and the sets
\begin{equation*}
{\mathcal{I}}:= \bigl{\{} i\in \{1,\ldots,m\} \colon \mathcal{S}_{i} =1 \bigr{\}},\quad
{\mathcal{J}}:= \bigl{\{} i\in \{1,\ldots,m\} \colon \mathcal{S}_{i} =2 \bigr{\}},
\end{equation*}
so that $\mathcal{S}_{i} = 0$ when $i\notin {\mathcal{I}} \cup {\mathcal{J}}$.
This completes the proof of Theorem~\ref{th-mult}.
\end{proof}

\subsection{Finding the constant $\mu^{*}(\lambda,r,R)$}\label{section-4.1}

The constant $\mu^{*}(\lambda,r,R)$ is defined as
\begin{equation*}
\mu^{*}(\lambda,r,R) :=\max \Bigl{\{} \mu^{(H_{1})},\mu^{(H_{3})} \Bigr{\}},
\end{equation*}
where $\mu^{(H_{1})}$ and $\mu^{(H_{3})}$ will be obtained along the arguments below (see \eqref{def-H1} and \eqref{def-H3}). We stress that such constants are fully explicit, depending only on $\lambda$, $r$, $\rho$,  $R$, $g(u)$ and $a(x)$.

\subsubsection*{Checking the assumptions of Lemma~\ref{lem-deg0}.}
Let $\mathcal{I},\mathcal{J}$ with $\mathcal{I}\neq\emptyset$ and define $v(x)$ as the indicator function of the set $\bigcup_{i\in\mathcal{I}}I^{+}_{i}$, namely
\begin{equation*}
v(x) =
\begin{cases}
\, 1, & \text{if $x \in \bigcup_{i\in\mathcal{I}}I^{+}_{i}$,} \\
\, 0, & \text{if $x \in \mathopen{[}0,P\mathclose{]} \setminus \bigcup_{i\in\mathcal{I}}I^{+}_{i}$.}
\end{cases}
\end{equation*}

\smallskip
\noindent
\textit{Verification of $(H_{1})$.}
Let $\alpha \geq 0$. By contradiction, we suppose that there exists a $P$-periodic solution $u(x)$ of \eqref{eq-lem-deg0}
with $0\leq u(x) \leq R$, for all $x\in\mathopen{[}0,P\mathclose{]}$, such that at least one of the following conditions holds:
\begin{itemize}
\item[$(h_{1}^{1})$] there is an index $i \notin \mathcal{I}\cup\mathcal{J}$ such that $\max_{x\in I^{+}_{i}} u(x) = r$;
\item[$(h_{2}^{1})$] there is an index $i \in \mathcal{I}$ such that $\max_{x\in I^{+}_{i}} u(x) = \rho$;
\item[$(h_{3}^{1})$] there is an index $i \in \mathcal{J}$ such that $\max_{x\in I^{+}_{i}} u(x) = R$.
\end{itemize}

Suppose that $(h_{1}^{1})$ holds. Since $v(x)=0$ for $x\in I^{-}_{i-1} \cup I^{+}_{i}\cup I^{-}_{i}$, equation \eqref{eq-lem-deg0} reduces to $(\mathscr{E}_{\lambda,\mu})$. 
Consider at first the case $u'(\sigma_{i}) \geq 0$.
By Lemma~\ref{lem-r} (with $\vartheta=1$), we have that
\begin{align*}
u(\sigma_{i+1}) 
&\geq r \biggl{(} 1 + \dfrac{1}{2} \Bigl{(}  \mu \gamma(r) \|A^{\mathrm{r}}_{i}\|_{L^1(I_{i}^{-})} e^{-|c||I^{-}_{i}|} - 1 \Bigr{)} \biggr{)} \\
&\geq \dfrac{\mu}{2} r \gamma(r)\|A^{\mathrm{r}}_{i}\|_{L^1(I_{i}^{-})} e^{-|c||I^{-}_{i}|}.
\end{align*}
Thus, taking
\begin{equation}\label{muright}
\mu > \hat{\mu}_{i}^{\text{\rm r}}:= \dfrac{2R e^{|c||I^{-}_{i}|}}{r\gamma(r) \|A^{\mathrm{r}}_{i}\|_{L^1(I_{i}^{-})}},
\end{equation}
we obtain $u(\sigma_{i+1}) > R$, a contradiction.
On the other hand, if $u'(\sigma_{i})< 0$, using the fact that $x\mapsto e^{cx}u'(x)$ is non-increasing on $I^{+}_{i}$, we have that $u'(\tau_{i})< 0$. In this case, we can use the second part of Lemma~\ref{lem-r} (with $\vartheta=1$) to reach the contradiction $u(\tau_{i-1}) > R$ whenever
\begin{equation}\label{muleft}
\mu > \hat{\mu}_{i}^{\text{\rm l}}:= \dfrac{2R e^{|c||I^{-}_{i-1}|}}{r\gamma(r) \|A^{\mathrm{l}}_{i-1}\|_{L^{1}(I^{-}_{i-1})}}.
\end{equation}

Now, we suppose that $(h_{2}^{1})$ holds. In this case a contradiction is immediately obtained by Lemma~\ref{lem-rho} (no assumption on $\mu > 0$ is needed).

At last, we assume that $(h_{3}^{1})$ holds. As for the case $(h_{1}^{1})$ we have $v(x)=0$ for $x\in I^{-}_{i-1} \cup I^{+}_{i}\cup I^{-}_{i}$. Then we can apply Lemma~\ref{lem-R} (with $d=\rho$ and $\vartheta=1$) in order to obtain
\begin{align*}
u(\sigma_{i+1}) &\geq R + \mu  \|A^{\mathrm{r}}_{i}\|_{L^1(I_{i}^{-})} \chi(\rho,R) e^{-|c||I^{-}_{i}|} 
\\ & \quad -\lambda  \|a\|_{L^{1}(I^{+}_{i})} \Gamma(R)e^{|c||I^{+}_{i}\cup I^{-}_{i}|}|I^{+}_{i}\cup I^{-}_{i}|.
\end{align*}
Taking
\begin{equation*}
\mu > \check{\mu}_{i}^{\text{\rm r}}:= \dfrac{\lambda  \|a\|_{L^{1}(I^{+}_{i})} \Gamma(R)e^{|c||I^{+}_{i}\cup I^{-}_{i}|}|I^{+}_{i}\cup I^{-}_{i}|}{\|A^{\mathrm{r}}_{i}\|_{L^1(I_{i}^{-})} \chi(\rho,R) e^{-|c||I^{-}_{i}|} },
\end{equation*}
we obtain $u(\sigma_{i+1}) > R$, a contradiction. Notice that, contrarily to the case $(h_{1}^1)$, here it is not necessary to consider the behavior of $u(x)$ in the interval $I_{i-1}^{-}$.

We conclude that $(H_{1})$ holds for
\begin{equation}\label{def-H1}
\mu > \mu^{(H_{1})} := \max_{i=1,\ldots,m} \bigl{\{} \hat{\mu}_{i}^{\text{\rm r}},\hat{\mu}_{i}^{\text{\rm l}},\check{\mu}_{i}^{\text{\rm r}}\bigr{\}}.
\end{equation}

\smallskip

\noindent
\textit{Verification of $(H_{2})$.}
Let $u(x)$ be an arbitrary non-negative $P$-periodic solution of \eqref{eq-lem-deg0} such
that $u(x) \leq \rho$ for all $x\in\bigcup_{i\in\mathcal{I}} I^{+}_{i}$.
We fix an index $j\in\mathcal{I}$ and observe that on the interval $I^{+}_{j}$ equation \eqref{eq-lem-deg0} reads as
\begin{equation*}
u'' + cu' +\lambda a^{+}(x) g(u) + \alpha = 0.
\end{equation*}
Let $\varepsilon \in \mathopen{]}0, (\tau_{j}- \sigma_{j})/2\mathclose{[}$. As shown along the proof of Lemma~\ref{lem-rho} the inequality \eqref{eq-3.2} holds. Then, integrating the differential equation on $\mathopen{[}\sigma_{j}+\varepsilon,\tau_{j}-\varepsilon\mathclose{]}$, we obtain
\begin{align*}
&\alpha \, (\tau_{j} - \sigma_{j} - 2\varepsilon) =\\
&= u'(\sigma_{j}+\varepsilon) - u'(\tau_{j}-\varepsilon) + cu(\sigma_{j}+\varepsilon) - cu(\tau_{j}-\varepsilon)-
\lambda \int_{\sigma_{j}+\varepsilon}^{\tau_{j}-\varepsilon} a^{+}(x)g(u(x))\,\mathrm{d}x 
\\ &\leq \dfrac{2\rho}{\varepsilon} e^{|c||I^{+}_{j}|} + 2|c|\rho.
\end{align*}
This yields a contradiction if $\alpha > 0$ is sufficiently large. Hence $(H_{2})$ is verified.
\qed

\subsubsection*{Checking the assumptions of Lemma~\ref{lem-deg1}.}

Let $\mathcal{J}\subseteq\{1,\ldots,m\}$ and $\vartheta\in \mathopen{]}0,1\mathclose{]}$.

\smallskip

\noindent
\textit{Verification of $(H_{3})$.}
By contradiction, suppose that there exists a $P$-periodic solution $u(x)$ of
\eqref{eq-lem-deg1} with $0 \leq u(x) \leq R$ for all $x\in \mathopen{[}0,P\mathclose{]}$, such that at least one of the following conditions holds:
\begin{itemize}
\item[$(h_{1}^{3})$] there is an index $i \notin \mathcal{J}$ such that $\max_{x\in I^{+}_{i}} u(x) = r$;
\item[$(h_{2}^{3})$] there is an index $i \in \mathcal{J}$ such that $\max_{x\in I^{+}_{i}} u(x) = R$.
\end{itemize}

Suppose that $(h_{1}^{3})$ holds. 
We consider at first the case $u'(\sigma_{i})\geq0$. We are going to prove that, if $\mu$ large enough, then
\begin{equation}\label{eq-uru'0}
u(x) > r \quad \text{ and } \quad u'(x)>0,
\end{equation}
for all $x\in \mathopen{[}0,P\mathclose{]}$. This clearly contradicts the $P$-periodicity of $u(x)$.

\noindent
\textit{Proving \eqref{eq-uru'0} in $I_{i+1}^{+}$.}
Taking $\mu > \hat{\mu}^{\text{\rm r}}_{i}$ (with $\hat{\mu}^{\text{\rm r}}_{i}$ defined in \eqref{muright}) then we have
\begin{equation}
\mu>\dfrac{e^{|c||I^{-}_{i}|}}{\gamma(r)\|A^{\mathrm{r}}_{i}\|_{L^1(I_{i}^{-})} }
\end{equation}
and so, from Lemma~\ref{lem-r}, $u(\sigma_{i+1})>r$ (as $\vartheta > 0$).
Moreover, using the estimate on $u'(\sigma_{i+1})$ provided in Lemma~\ref{lem-r}, we observe that $u'(\sigma_{i+1})>0$ when
\begin{equation}\label{eq-mu1}
\mu > \dfrac{2 \lambda \|a\|_{L^{1}(I^{+}_{i})} \zeta(r) e^{2|c||I^{+}_{i}\cup I^{-}_{i}|}}{ \gamma(r) \|a\|_{L^{1}(I^{-}_{i})}}.
\end{equation}
Integrating \eqref{eq-lem-deg1} on $\mathopen{[}\sigma_{i+1},x\mathclose{]} \subseteq I^{+}_{i+1}$ and using again Lemma~\ref{lem-r}, we obtain
\begin{equation*}
\begin{aligned}
u'(x) &= u'(\sigma_{i+1}) e^{c(\sigma_{i+1}-x)} - \vartheta \lambda \int_{\sigma_{i+1}}^{x} a^{+}(\xi) g(u(\xi))e^{c(\xi-x)} \,\mathrm{d}\xi 
\\ &\geq u'(\sigma_{i+1}) e^{-|c||I^{+}_{i+1}|} - \vartheta\lambda \|a\|_{L^{1}(I^{+}_{i+1})}\Gamma(R) e^{|c||I^{+}_{i+1}|}
\\ &\geq \vartheta r \biggl{(}\dfrac{1}{2} \mu \gamma(r) \|a\|_{L^{1}(I^{-}_{i})}e^{-|c||I^{-}_{i}\cup I^{+}_{i+1}|}  - \lambda \|a\|_{L^{1}(I^{+}_{i})} \zeta(r) e^{|c||I^{+}_{i}\cup I^{-}_{i} \cup I^{+}_{i+1}|}
\\ &
- \lambda \|a\|_{L^{1}(I^{+}_{i+1})}\dfrac{\Gamma(R)}{r} e^{|c||I^{+}_{i+1}|}
\biggr{)}.
\end{aligned}
\end{equation*}
Notice that the first of the above inequalities requires $u'(\sigma_{i+1})\geq0$, which is ensured by \eqref{eq-mu1}.
Taking
\begin{equation*}
\mu > \tilde{\mu}^{\text{\rm r}}_{i}:=
\dfrac{2\lambda \bigl{(} \|a\|_{L^{1}(I^{+}_{i})} \zeta(r) r e^{|c||I^{+}_{i}\cup I^{-}_{i} \cup I^{+}_{i+1}|}
+ \|a\|_{L^{1}(I^{+}_{i+1})} \Gamma(R) e^{|c||I^{+}_{i+1}| } \bigr{)}}{ \gamma(r)r \|a\|_{L^{1}(I^{-}_{i})}e^{-|c||I^{-}_{i}\cup I^{+}_{i+1}|}},
\end{equation*}
we finally obtain that
\begin{equation*}
u'(x)>0, \quad \text{for all $x\in I^{+}_{i+1}$.}
\end{equation*}
Consequently $u(x) \geq u(\sigma_{i+1}) > r$ on $I^{+}_{i+1}$.
We conclude that for 
\begin{equation*}
\mu > \max\bigl{\{}\hat{\mu}^{\text{\rm r}}_{i},\tilde{\mu}^{\text{\rm r}}_{i}\bigr{\}},
\end{equation*}
inequalities in \eqref{eq-uru'0} hold.

\noindent
\textit{Proving \eqref{eq-uru'0} in $I_{i+1}^{-}$.}
Using the monotonicity of the map $x\mapsto e^{cx}u'(x)$ we deduce that
$u'(x)\geq e^{c(\tau_{i+1}-x)}u'(\tau_{i+1})>0$ on $I^{-}_{i+1}$. Thus the conclusion follows, since $u(\tau_{i+1}) > r$.

\noindent
\textit{Proving \eqref{eq-uru'0} in $I_{i+2}^{+}$.}
Integrating the equation \eqref{eq-lem-deg1} on $\mathopen{[}\tau_{i+1},x\mathclose{]} \subseteq I^{-}_{i+1}$ we find
\begin{align*}
u'(x) &= u'(\tau_{i+1})e^{c(\tau_{i+1}-x)} + \vartheta \mu \int_{\tau_{i+1}}^{x} a^{-}(\xi) g(u(\xi))e^{c(\xi-x)} \,\mathrm{d}\xi 
\\ &> \vartheta \mu A^{\mathrm{r}}_{i+1}(x) \chi(r,R) e^{-|c||I^{-}_{i+1}|}, \quad \text{for all $x\in I^{-}_{i+1}$,}
\end{align*}
in particular
\begin{equation*}
u'(\sigma_{i+2}) > \vartheta \mu \|a\|_{L^{1}(I^{-}_{i+1})}\, \chi(r,R) e^{-|c||I^{-}_{i+1}|} >0.
\end{equation*}
On the other hand, integrating the equation \eqref{eq-lem-deg1} on $\mathopen{[}\sigma_{i+2},x\mathclose{]}\subseteq I^{+}_{i+2}$ we find
\begin{equation*}
\begin{aligned}
u'(x)
   &= u'(\sigma_{i+2}) e^{c(\sigma_{i+2}-x)} -\vartheta \lambda \int_{\sigma_{i+2}}^{x} a^{+}(\xi) g(u(\xi)) e^{c(\xi-x)}\,\mathrm{d}\xi
\\ &> \vartheta \Bigl{(} \mu \|a\|_{L^{1}(I^{-}_{i+1})} \chi(r,R)e^{-|c||I^{-}_{i+1}\cup I^{+}_{i+2}|} - \lambda \|a\|_{L^{1}(I^{+}_{i+2})} \Gamma(R) e^{|c||I^{+}_{i+2}|} \Bigr{)}
> 0,
\end{aligned}
\end{equation*}
for all $ x\in I^{+}_{i+2}$, 
where the last inequality holds for
\begin{equation*}
\mu > \mu^{*,+}_{i} = \mu^{*,+}_{i}(I^{-}_{i+1},I^{+}_{i+2}) := \dfrac{\lambda \|a\|_{L^{1}(I^{+}_{i+2})} \Gamma(R) e^{2|c||I^{-}_{i+1}\cup I^{+}_{i+2}|}}{\|a\|_{L^{1}(I^{-}_{i+1})}\chi(r,R)}.
\end{equation*}
Then the solution $u(x)$ is increasing in $I^{+}_{i+2}$ and hence $u(x) > u(\sigma_{i+2}) > r$ on $I^{+}_{i+2}$.
Therefore, the inequalities in \eqref{eq-uru'0} hold in $I^{+}_{i+2}$.

\noindent
\textit{Proving \eqref{eq-uru'0} in $\mathopen{[}0,P\mathclose{]}$.}
This is easily achieved by repeating the argument just described in order to cover a $P$-periodicity interval. This eventually requires
\begin{equation*}
\mu > \max_{i=1,\ldots,m} \mu_{i}^{*,+}.
\end{equation*}

Having dealt with the case $u'(\sigma_{i}) \geq 0$, we now assume $u'(\sigma_{i}) < 0$, which implies (by the monotonicity of the map $x \mapsto e^{cx}u'(x)$ in $I^{+}_{i}$) that
$u'(\tau_{i})< 0$. A contradiction can be achieved proceeding backward. More precisely, we may use at first Lemma~\ref{lem-r} and then an inductive argument similar to the one explained above.
Conditions on $\mu$ will be replaced by the analogous inequalities
\begin{equation*}
\mu > \hat{\mu}^{\text{\rm l}}_{i},
\end{equation*}
with $\hat{\mu}^{\text{\rm l}}_{i}$ defined in \eqref{muleft},
\begin{equation*}
\mu > \tilde{\mu}^{\text{\rm l}}_{i}:=
\dfrac{2\lambda \bigl{(} \|a\|_{L^{1}(I^{+}_{i})} \zeta(r) r e^{|c||I^{+}_{i-1}\cup I^{-}_{i-1} \cup I^{+}_{i}|}
+ \|a\|_{L^{1}(I^{+}_{i-1})} \Gamma(R) e^{|c||I^{+}_{i-1}| } \bigr{)}}{ \gamma(r)r \|a\|_{L^{1}(I^{-}_{i-1})}e^{-|c||I^{-}_{i-1}\cup I^{+}_{i-1}|}},
\end{equation*}
and
\begin{equation*}
\mu > \mu^{*,-}_{i} = \mu^{*,-}_{i}(I^{+}_{i-2},I^{-}_{i-2}) := \dfrac{\lambda \|a\|_{L^{1}(I^{+}_{i-2})} \Gamma(R) e^{2|c||I^{+}_{i-2}\cup I^{-}_{i-2}|}}{\|a\|_{L^{1}(I^{-}_{i-2})}\chi(r,R)}.
\end{equation*}
Thus the contradiction $u'(x) < 0$ for all $x \in \mathopen{[}0,P\mathclose{]}$ can be proved for
\begin{equation*}
\mu > \max_{i=1,\ldots,m} \mu_{i}^{*,-}.
\end{equation*}

Taking into account all the possible situations we conclude that the case $(h_{1}^{3})$ never occurs if
\begin{equation*}
\mu > \mu_{1}^{(H_{3})} : = \max_{i=1,\ldots,m} \bigl{\{} \hat{\mu}^{\mathrm{r}}_{i},
\hat{\mu}^{\mathrm{l}}_{i}, \tilde{\mu}^{\mathrm{r}}_{i}, \tilde{\mu}^{\mathrm{l}}_{i},
\mu_{i}^{*,+}, \mu_{i}^{*,-} \bigr{\}}.
\end{equation*}

To conclude the proof, suppose now that $(h_{2}^{3})$ holds. 
Applying Lemma~\ref{lem-R}, the contradiction $u(\sigma_{i+1})>R$ follows when
\begin{equation*}
\mu > \bar{\mu}_{i} := \dfrac{\lambda \|a\|_{L^{1}(I^{+}_{i})} \Gamma(R) e^{2|c||I^{+}_{i}\cup I^{-}_{i}|}|I^{+}_{i}\cup I^{-}_{i}|}{\|A_{i}^{\mathrm{r}}\|_{L^{1}(I^{-}_{i})}\chi(r,R)}.
\end{equation*}
We conclude that the case $(h_{2}^{3})$ never occurs if
\begin{equation*}
\mu > \mu_{2}^{(H_{3})} : = \max_{i=1,\ldots,m} \bar{\mu}_{i}.
\end{equation*}

\smallskip

Summing up, we can apply Lemma~\ref{lem-deg1} for
\begin{equation}\label{def-H3}
\mu > \mu^{(H_{3})}:=\max \Bigl{\{} \mu_{1}^{(H_{3})},\mu_{2}^{(H_{3})}, \mu^{\#}(\lambda) \Bigr{\}}
\end{equation}
and therefore formula \eqref{eq-lem-deg1} is
verified.
\qed

\section{Globally defined solutions and symbolic dynamics}\label{section-5}

In this section we prove Theorem~\ref{th-chaos}. Actually, we are going to give just a sketch of the argument, which follows the same schemes of the one for the proof of \cite[Theorem~4.5]{FeZa-17jde}. 
We also remark that one could adapt to the present setting also the discussion developed in~\cite[Section~6]{BoFeZa-18tams}, in order to show that the existence of non-periodic bounded solutions coded by sequences of three symbols implies semiconjugation of a suitable map induced by $(\mathscr{E}_{\lambda,\mu})$ with the Bernoulli shift.

\begin{proof}[Proof of Theorem~\ref{th-chaos}]
Given $\rho>0$, we fix the constants $\lambda^{*}$, $r$, $R$, and $\mu^{*}$ as in Theorem~\ref{th-mult}. 
The first crucial observation is that all these constants depend (besides on $g$) only on the behavior of the weight function $a(x)$ on the intervals $I^{+}_{i}$ and $I^{-}_{i}$ with $i\in\{1,\ldots,m\}$ (and not on the length $P$ of the periodicity interval). As a consequence, the conclusion of Theorem~\ref{th-mult} holds (with the same constants) even if, in place of $\mathopen{[}0,P\mathclose{]}$, an interval of the type $\mathopen{[}n_{1} P,n_{2} P\mathclose{]}$ (with $n_{1},n_{2}\in\mathbb{Z}$ and $n_{1} < n_{2}$) is considered.

Let $\mathcal{S} = (\mathcal{S}_{i})_{i\in\mathbb{Z}}\in \{0,1,2\}^{\mathbb{Z}}$ be an arbitrary sequence which is not identically zero.

If $\mathcal{S}$ is $km$-periodic for some integer $k\geq1$, then an application of Theorem~\ref{th-mult} in the interval $\mathopen{[}0,kP\mathclose{]}$ ensures the existence of at least a $kP$-periodic solution $u_{\mathcal{S}}(x)$ of $(\mathscr{E}_{\lambda,\mu})$ coded by $\mathcal{S}$.

If it is not the case, we approximate $\mathcal{S}$ with the sequence $(\mathcal{S}^{n})_{n}$, where $\mathcal{S}^{n}\in \{0,1,2\}^{\mathbb{Z}}$ is the $(2n+1)m$-periodic sequence defined as
\begin{equation*}
\mathcal{S}^{n}_{j}:=\mathcal{S}_{j}, \quad \text{for $j=-nm+1,\ldots,(n+1)m$.}
\end{equation*}
An application of Theorem~\ref{th-mult} on the interval $\mathopen{[}-nP,(n+1)P\mathclose{]}$ (at least for $n$ sufficiently large, so that $\mathcal{S}^{n}\not\equiv0$) leads to the existence of a non-constant positive $(2n+1)P$-periodic solution $u_{n}(x)$ of $(\mathscr{E}_{\lambda,\mu})$ such that
\begin{itemize}
\item $\max_{t \in I^{+}_{i,\ell}} u_{n}(x) < r$, if $\mathcal{S}_{j} = 0$ for $j= i + \ell m$;
\item $r < \max_{t \in I^{+}_{i,\ell}} u_{n}(x) < \rho$, if $\mathcal{S}_{j} = 1$ for $j= i + \ell m$;
\item $\rho < \max_{t \in I^{+}_{i,\ell}} u_{n}(x) < R$, if $\mathcal{S}_{j} = 2$ for $j= i + \ell m$;
\end{itemize}
for every $i=1,\ldots,m$ and $\ell=-n,\ldots,n$.

A compactness argument (cf.~\cite[Section~4.3]{FeZa-17jde}) ensures the existence of a solution $\tilde{u}(x)$ of $(\mathscr{E}_{\lambda,\mu})$ defined on $\mathbb{R}$ and  obtained as the limit of a subsequence of $u_{n}(x)$.
Passing to the limit as $n\to\infty$, we have
\begin{itemize}
\item $\max_{x\in I^{+}_{i,\ell}} \tilde{u}(x) \leq r$, if $\mathcal{S}_{j} = 0$ for $j= i + \ell m$;
\item $r \leq \max_{x\in I^{+}_{i,\ell}} \tilde{u}(x) \leq \rho$, if $\mathcal{S}_{j} = 1$ for $j= i + \ell m$;
\item $\rho \leq \max_{x\in I^{+}_{i,\ell}} \tilde{u}(x) \leq R$, if $\mathcal{S}_{j} = 2$ for $j= i + \ell m$;
\end{itemize}
for every $i=1,\ldots,m$ and $\ell\in\mathbb{Z}$.

To conclude the proof we have to show that the above inequalities are strict. This can be done using on one hand Lemma~\ref{lem-rho} (ensuring that $\max_{I^{+}_{i,\ell}} u_{n} \neq \rho$) and on the other hand the arguments exploited in Section~\ref{section-4.1} to prove that the alternatives $(h^1_{1})$ and $(h^1_{3})$ can not hold (notice that for these the periodicity is not necessary).
\end{proof}

\begin{remark}\label{rem-5.1}
Given an integer $k\geq 2$, Theorem~\ref{th-mult} provides positive $kP$-periodic solutions of $(\mathscr{E}_{\lambda,\mu})$. In this direction, it is natural to investigate whether such solutions have $kP$ as minimal period, namely, whether they
are not $\ell P$-periodic for any integer $\ell = 1,\ldots,k-1$. A $kP$-periodic solution with this property is usually said to be a 
\textit{subharmonic solution of order $k$} (cf.~\cite{BoFe-18} and \cite[Section~4.1]{FeZa-17jde} for additional comments and references on the subject).

Given an integer $k\geq 2$, in order to produce at least a subharmonic solutions of order $k$, it is sufficient to take the $km$-periodic sequence $\mathcal{S} = (\mathcal{S}_{j})_{j\in\mathbb{Z}}\in\{0,1,2\}^{\mathbb{Z}}$ given by $\mathcal{S}_{1}=1$ and $\mathcal{S}_{j}=0$ for $j\in\{2,\ldots,km\}$. The minimality of the period $kP$ is a consequence of the behavior of the solution $u_{\mathcal{S}}(x)$ given by $\mathcal{S}$. Following the discussion developed in \cite[Section~6]{BoFeZa-18tams} and in \cite[Section~4.2]{FeZa-17jde}, one can give an estimate for the number of subharmonic solutions of order $k$. Indeed, one can define a one-to-one correspondence between the aperiodic necklaces of length $k$ on $n$ colors and the non-null strings of length $k$ on $n$ symbols. Taking $n=3^{m}$ symbols/colors, the desired estimate is given by Witt's formula:
\begin{equation*}
\Sigma_{3^{m}}(k) = \dfrac{1}{k} \sum_{l|k} \mu(l) \, 3^{\frac{mk}{l}},
\end{equation*}
where $\mu(\cdot)$ is the M\"{o}bius function, defined on $\mathbb{N}\setminus\{0\}$ by $\mu(1) = 1$,
$\mu(l) = (-1)^{q}$ if $l$ is the
product of $q$ distinct primes and $\mu(l) = 0$ otherwise. We refer to \cite[Remark~4.1]{Fe-18dcdss} for an interesting discussion on this formula.
$\hfill\lhd$
\end{remark}

\section{Related results and remarks}\label{section-6}

We conclude the paper with some complementary results and remarks.

\subsection{Subharmonic solutions}\label{section-6.1}

In the context of Theorem~\ref{th-exis}, if we further suppose that $g(u)$ is of class $\mathcal{C}^{2}$ in an interval $\mathopen{[}0,\varepsilon\mathclose{]}$ and satisfies $g''(u) > 0$ for every $u \in \mathopen{]}0,\varepsilon\mathclose{]}$, then the equation 
\begin{equation}\label{eq-c0}
u'' + a_{\lambda,\mu}(x) g(u) = 0
\end{equation}
has, for every $\lambda > \lambda^*$ and $\mu > \mu^{\#}(\lambda)$, positive subharmonic solutions of order $k$ for any integer $k$ large enough.

This follows from \cite[Theorem~3.3]{BoFe-18}, after having observed that the constant $\lambda^*$ given therein does not depend on $a^{-}(x)$ (actually, is obtained exactly as in Lemma~\ref{lem-rho}). Let us stress that such a proof is of symplectic nature, being based on the Poincar\'e--Birkhoff fixed point theorem: therefore, the assumption $c = 0$ is essential. Subharmonic solutions in the case $c \neq 0$ can be found as in Remark~\ref{rem-5.1} (for every integer $k \geq 1$), but only for larger $\mu$, i.e.,~$\mu >\mu^{*}(\lambda)$.

\subsection{Dirichlet and Neumann boundary conditions}\label{section-6.2}

A suitable variant of Theorem~\ref{th-mult} is valid when equation
$(\mathscr{E}_{\lambda,\mu})$ is coupled with Dirichlet boundary conditions
\begin{equation*}
u(0) = u(P) = 0
\end{equation*}
or Neumann boundary conditions
\begin{equation*}
u'(0) = u'(P) = 0.
\end{equation*}
Let us recall that, in both these cases, with a standard change of variable we can assume $c = 0$ (cf.~\cite[Appendix C]{Fe-18}).

In this context, it is possible to consider a slightly more general sign condition, with respect to $(a_*)$, for the $L^{1}$-weight
$a \colon \mathopen{[}0,P\mathclose{]} \to \mathbb{R}$. Precisely, $a(x)$
can be allowed to have an initial negativity interval $I^{-}_{0}$ and, if $m \geq 2$ or $I^{-}_{0} \neq \emptyset$, to have $I^{-}_{m}=\emptyset$, that is, $a(x)$ can be non-negative in a left neighborhood of $P$, provided that there exists at least one negativity interval (cf.~\cite[Section~7.2]{BoFeZa-18tams}).

The proofs require just minor modifications with respect to the ones given for the periodic problem. 
Precisely, the appropriate abstract setting for Dirichlet and Neumann boundary conditions is described in \cite[Remark~2.1]{BoFeZa-18tams}; with this in mind, the general strategy in Section~\ref{section-4} remains the same.
In order to verify the assumptions of the degree lemmas in Section~\ref{section-2.1}, the estimates given in Section~\ref{section-2.2} can still be exploited, since they are of local nature, and the boundary condition at $x = 0$ and $x = P$ can be used in place of the $P$-periodicity to reach the desired contradictions. See also Figure~\ref{fig-1} for a numerical example.

As standard corollaries, one can give multiplicity results for radially symmetric positive solutions of elliptic BVPs on annular domains (cf.~\cite[Section~7.3]{BoFeZa-18tams} and \cite[Section~3]{FeSo-18na}).

\begin{figure}[!htb]
\centering
\begin{subfigure}[t]{0.44\textwidth}
\centering
\begin{tikzpicture}[scale=1]
\begin{axis}[
  tick label style={font=\scriptsize},
  axis y line=left, 
  axis x line=middle,
  xtick={1.5708, 3.14159, 4.71239, 6.28319,8},
  ytick={-3,-2,-1,0,1,2,3},
  xticklabels={,$\,\,\,\,\pi$, , $\,\,\,\,\,\,2\pi$,$8$},
  yticklabels={,,,$0$,$1$,},
  xlabel={\small $x$},
  ylabel={\small $a(x)$},
every axis x label/.style={
    at={(ticklabel* cs:1.0)},
    anchor=west,
},
every axis y label/.style={
    at={(ticklabel* cs:1.0)},
    anchor=south,
},
  width=5.5cm,
  height=4.5cm,
  xmin=0,
  xmax=9.2,
  ymin=-3.25,
  ymax=3.5]
\addplot [mark=none,domain=0:6.283,line width=1pt,smooth,color={rgb:red,0.3;green,0;blue,0.5}] {2*sin(deg(2*x))-max(0,sin(deg(x)))};
\addplot [mark=none,domain=6.283:8,line width=1pt,smooth,color={rgb:red,0.3;green,0;blue,0.5}] {0.2};
\end{axis}
\end{tikzpicture}
\caption{Graph of the weight term defined as $a(x)=2\sin(2x)-\max\{0,\sin(x)\}$ on $\mathopen{[}0,2\pi\mathclose{]}$ and $a(x)=0.2$ on $\mathopen{[}2\pi,8\mathclose{]}$.}
\end{subfigure}
\hspace*{\fill}
\begin{subfigure}[t]{0.44\textwidth}
\centering
\begin{tikzpicture}[scale=1]
\begin{axis}[
  tick label style={font=\scriptsize},
  axis y line=left, 
  axis x line=bottom,
  xtick={0,1},
  ytick={0.1},
  xlabel={\small $u$},
  ylabel={\small $g(u)$},
every axis x label/.style={
    at={(ticklabel* cs:1.0)},
    anchor=west,
},
every axis y label/.style={
    at={(ticklabel* cs:1.0)},
    anchor=south,
},
  width=5.6cm, 
  height=4cm,
  xmin=0,
  xmax=1.15,
  ymin=0,
  ymax=0.18] 
\addplot [mark=none,domain=0:1,line width=1pt,smooth,color={rgb:red,0.1;green,0.9;blue,0.9}] {x*x*(1-x)};
\end{axis}
\end{tikzpicture}
\caption{Graph of the nonlinear term $g(u)=u^{2}(1-u)$.} 
\end{subfigure}
\\
\vspace{15pt}
\begin{subfigure}[t]{1\textwidth}
\centering
\begin{tikzpicture}[scale=1]
\begin{axis}[
  tick label style={font=\scriptsize,major tick length=3pt},
          scale only axis,
  enlargelimits=false,
  xtick={0, 1.31812,3.14159,4.71239,6.28319,8},
  xticklabels={0,,,,,$8$},
  ytick={0,1},
  max space between ticks=50,
                minor x tick num=3,
                minor y tick num=10,                
  xlabel={\small $x$},
  ylabel={\small $u(x)$},
every axis x label/.style={
below,
at={(1.4cm,0.1cm)},
  yshift=-8pt
  },
every axis y label/.style={
below,
at={(0cm,1.4cm)},
  xshift=-3pt},
  y label style={rotate=90,anchor=south},
  width=2.8cm,
  height=2.8cm,  
  xmin=0,
  xmax=8,
  ymin=0,
  ymax=1]
\addplot graphics[xmin=0,xmax=8,ymin=0,ymax=1] {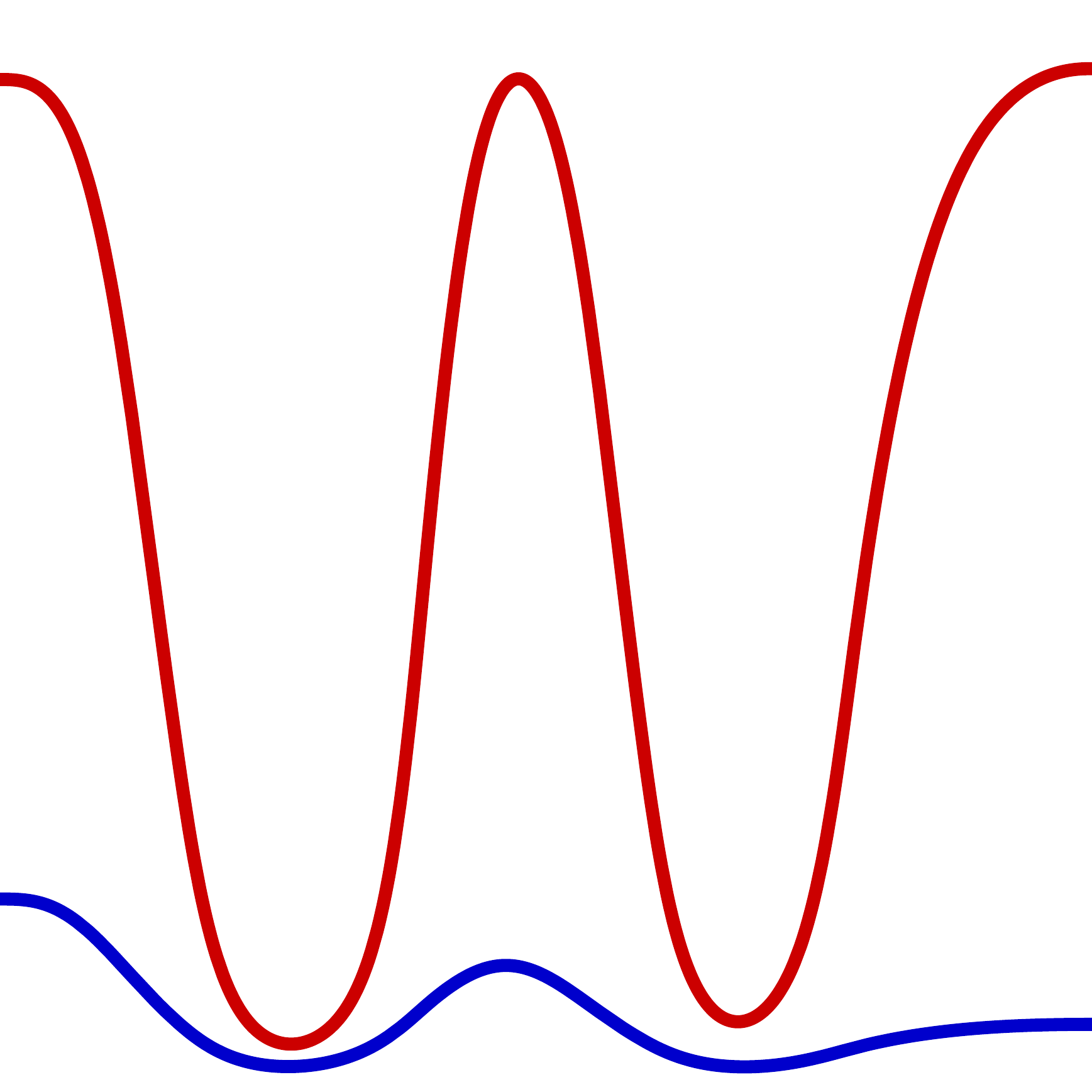};
\end{axis}
\end{tikzpicture} 
\hspace*{\fill}
\begin{tikzpicture}[scale=1]
\begin{axis}[
  tick label style={font=\scriptsize,major tick length=3pt},
          scale only axis,
  enlargelimits=false,
  xtick={0, 1.31812,3.14159,4.71239,6.28319,8},
  xticklabels={0,,,,,$8$},
  ytick={0,1},
  max space between ticks=50,
                minor x tick num=3,
                minor y tick num=10,                
  xlabel={\small $x$},
  ylabel={\small $u(x)$},
every axis x label/.style={
below,
at={(1.4cm,0.1cm)},
  yshift=-8pt
  },
every axis y label/.style={
below,
at={(0cm,1.4cm)},
  xshift=-3pt},
  y label style={rotate=90,anchor=south},
  width=2.8cm,
  height=2.8cm,  
  xmin=0,
  xmax=8,
  ymin=0,
  ymax=1] 
\addplot graphics[xmin=0,xmax=8,ymin=0,ymax=1] {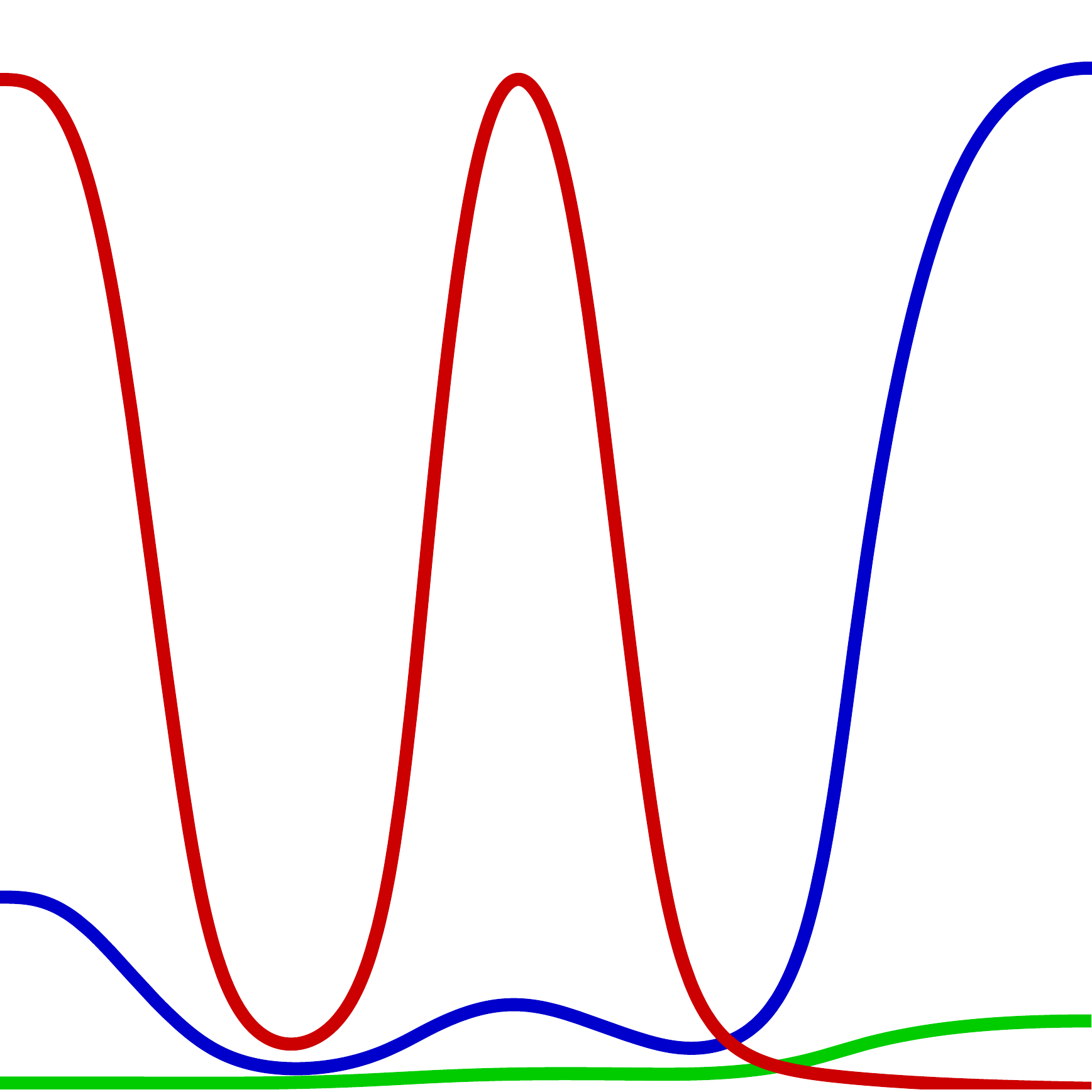};
\end{axis}
\end{tikzpicture} 
\hspace*{\fill}
\begin{tikzpicture}[scale=1]
\begin{axis}[
  tick label style={font=\scriptsize,major tick length=3pt},
          scale only axis,
  enlargelimits=false,
  xtick={0, 1.31812,3.14159,4.71239,6.28319,8},
  xticklabels={0,,,,,$8$},
  ytick={0,1},
  max space between ticks=50,
                minor x tick num=3,
                minor y tick num=10,                
  xlabel={\small $x$},
  ylabel={\small $u(x)$},
every axis x label/.style={
below,
at={(1.4cm,0.1cm)},
  yshift=-8pt
  },
every axis y label/.style={
below,
at={(0cm,1.4cm)},
  xshift=-3pt},
  y label style={rotate=90,anchor=south},
  width=2.8cm,
  height=2.8cm,  
  xmin=0,
  xmax=8,
  ymin=0,
  ymax=1] 
\addplot graphics[xmin=0,xmax=8,ymin=0,ymax=1] {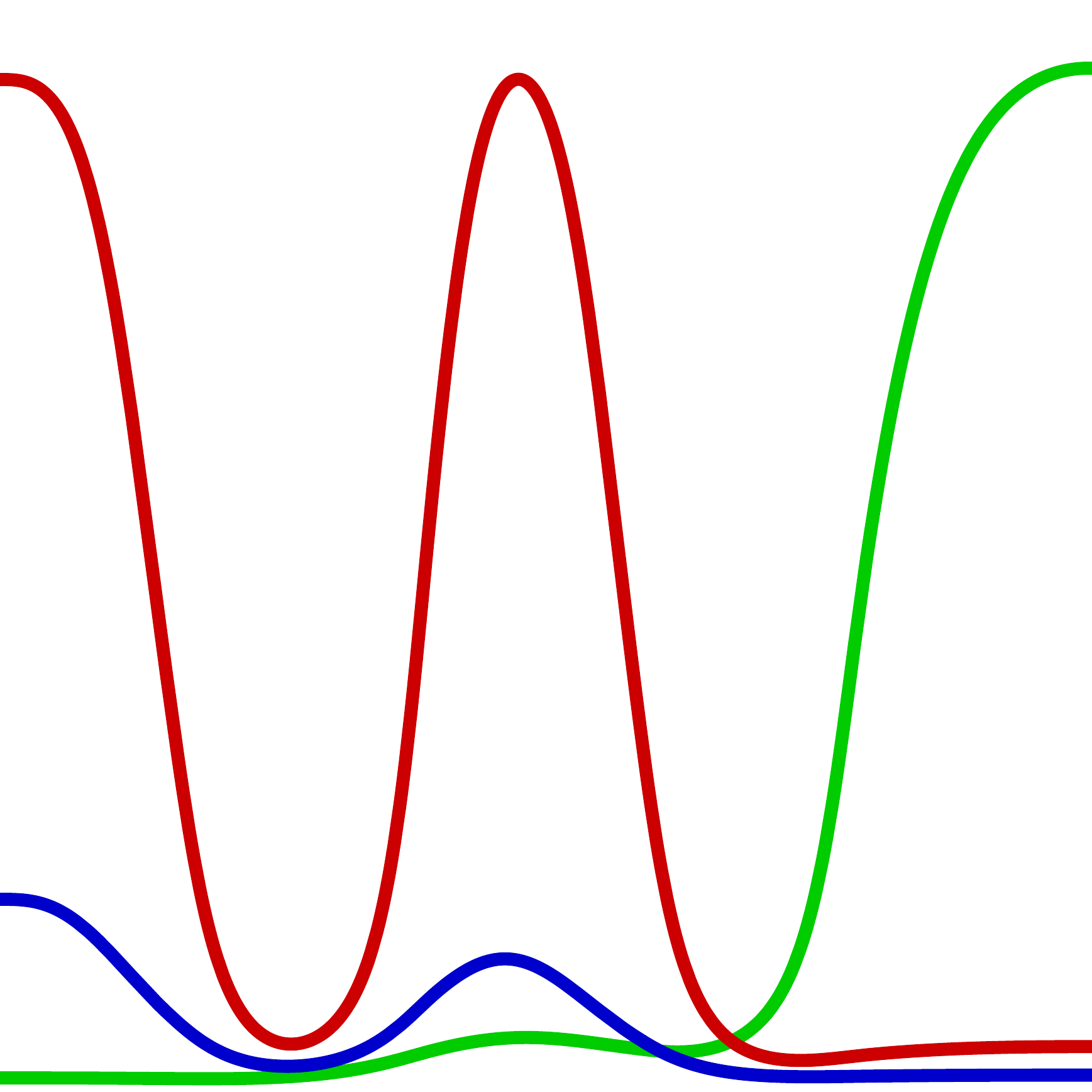};
\end{axis}
\end{tikzpicture} 
\\
\begin{tikzpicture}[scale=1]
\begin{axis}[
  tick label style={font=\scriptsize,major tick length=3pt},
          scale only axis,
  enlargelimits=false,
  xtick={0, 1.31812,3.14159,4.71239,6.28319,8},
  xticklabels={0,,,,,$8$},
  ytick={0,1},
  max space between ticks=50,
                minor x tick num=3,
                minor y tick num=10,                
  xlabel={\small $x$},
  ylabel={\small $u(x)$},
every axis x label/.style={
below,
at={(1.4cm,0.1cm)},
  yshift=-8pt
  },
every axis y label/.style={
below,
at={(0cm,1.4cm)},
  xshift=-3pt},
  y label style={rotate=90,anchor=south},
  width=2.8cm,
  height=2.8cm,  
  xmin=0,
  xmax=8,
  ymin=0,
  ymax=1] 
\addplot graphics[xmin=0,xmax=8,ymin=0,ymax=1] {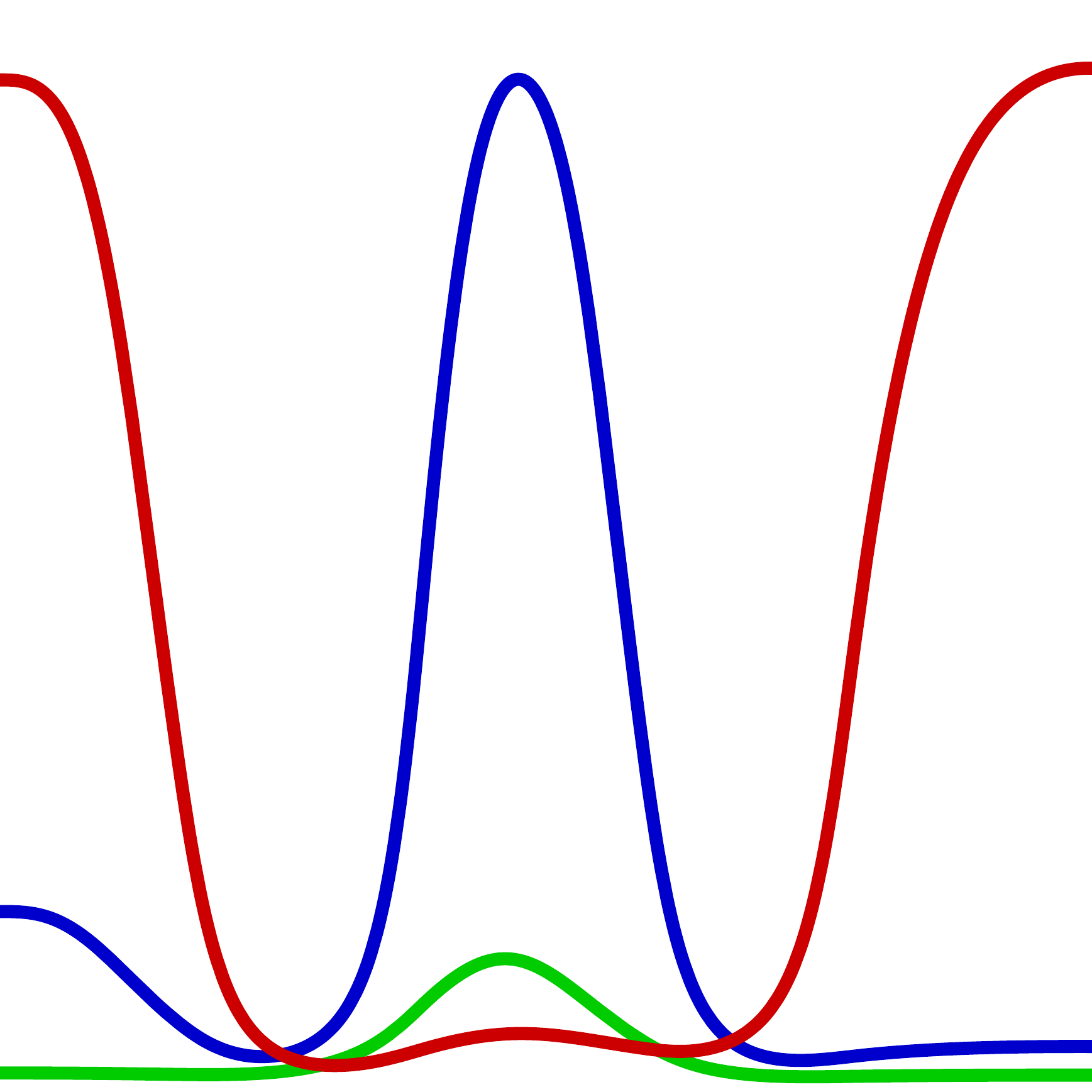};
\end{axis}
\end{tikzpicture} 
\hspace*{\fill}
\begin{tikzpicture}[scale=1]
\begin{axis}[
  tick label style={font=\scriptsize,major tick length=3pt},
          scale only axis,
  enlargelimits=false,
  xtick={0, 1.31812,3.14159,4.71239,6.28319,8},
  xticklabels={0,,,,,$8$},
  ytick={0,1},
  max space between ticks=50,
                minor x tick num=3,
                minor y tick num=10,                
  xlabel={\small $x$},
  ylabel={\small $u(x)$},
every axis x label/.style={
below,
at={(1.4cm,0.1cm)},
  yshift=-8pt
  },
every axis y label/.style={
below,
at={(0cm,1.4cm)},
  xshift=-3pt},
  y label style={rotate=90,anchor=south},
  width=2.8cm,
  height=2.8cm,  
  xmin=0,
  xmax=8,
  ymin=0,
  ymax=1] 
\addplot graphics[xmin=0,xmax=8,ymin=0,ymax=1] {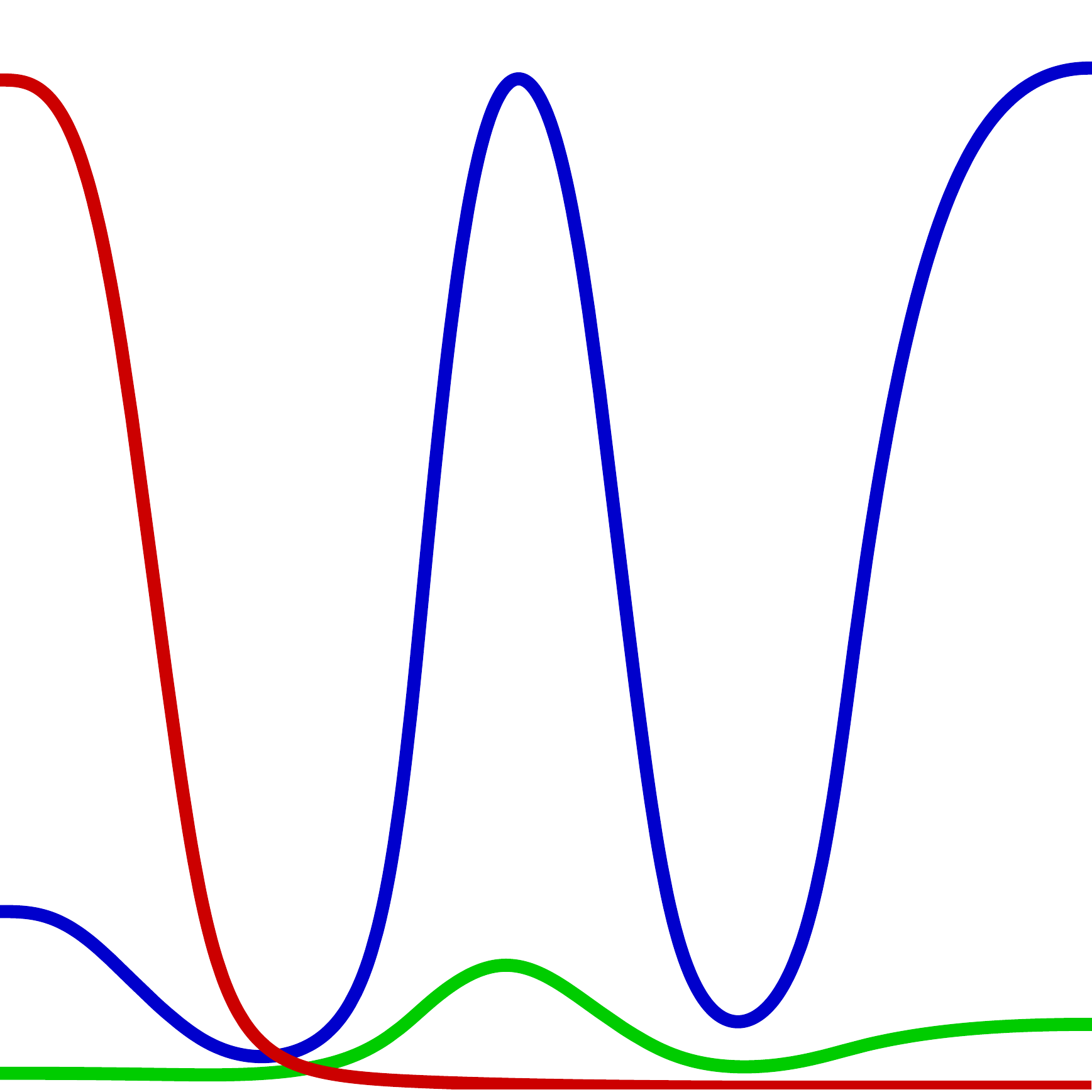};
\end{axis}
\end{tikzpicture} 
\hspace*{\fill}
\begin{tikzpicture}[scale=1]
\begin{axis}[
  tick label style={font=\scriptsize,major tick length=3pt},
          scale only axis,
  enlargelimits=false,
  xtick={0, 1.31812,3.14159,4.71239,6.28319,8},
  xticklabels={0,,,,,$8$},
  ytick={0,1},
  max space between ticks=50,
                minor x tick num=3,
                minor y tick num=10,                
  xlabel={\small $x$},
  ylabel={\small $u(x)$},
every axis x label/.style={
below,
at={(1.4cm,0.1cm)},
  yshift=-8pt
  },
every axis y label/.style={
below,
at={(0cm,1.4cm)},
  xshift=-3pt},
  y label style={rotate=90,anchor=south},
  width=2.8cm,
  height=2.8cm,  
  xmin=0,
  xmax=8,
  ymin=0,
  ymax=1] 
\addplot graphics[xmin=0,xmax=8,ymin=0,ymax=1] {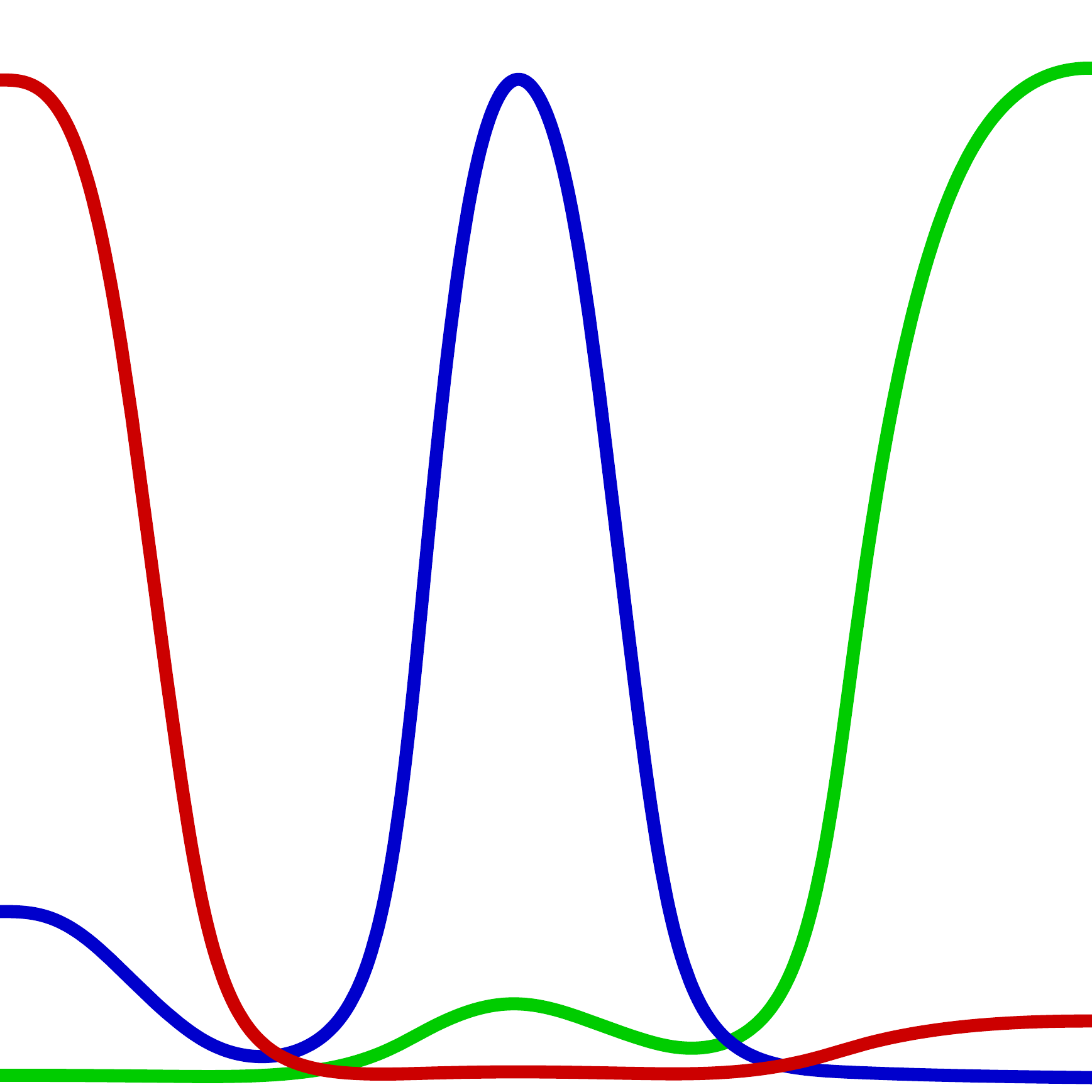};
\end{axis}
\end{tikzpicture} 
\\
\begin{tikzpicture}[scale=1]
\begin{axis}[
  tick label style={font=\scriptsize,major tick length=3pt},
          scale only axis,
  enlargelimits=false,
  xtick={0, 1.31812,3.14159,4.71239,6.28319,8},
  xticklabels={0,,,,,$8$},
  ytick={0,1},
  max space between ticks=50,
                minor x tick num=3,
                minor y tick num=10,                
  xlabel={\small $x$},
  ylabel={\small $u(x)$},
every axis x label/.style={
below,
at={(1.4cm,0.1cm)},
  yshift=-8pt
  },
every axis y label/.style={
below,
at={(0cm,1.4cm)},
  xshift=-3pt},
  y label style={rotate=90,anchor=south},
  width=2.8cm,
  height=2.8cm,  
  xmin=0,
  xmax=8,
  ymin=0,
  ymax=1] 
\addplot graphics[xmin=0,xmax=8,ymin=0,ymax=1] {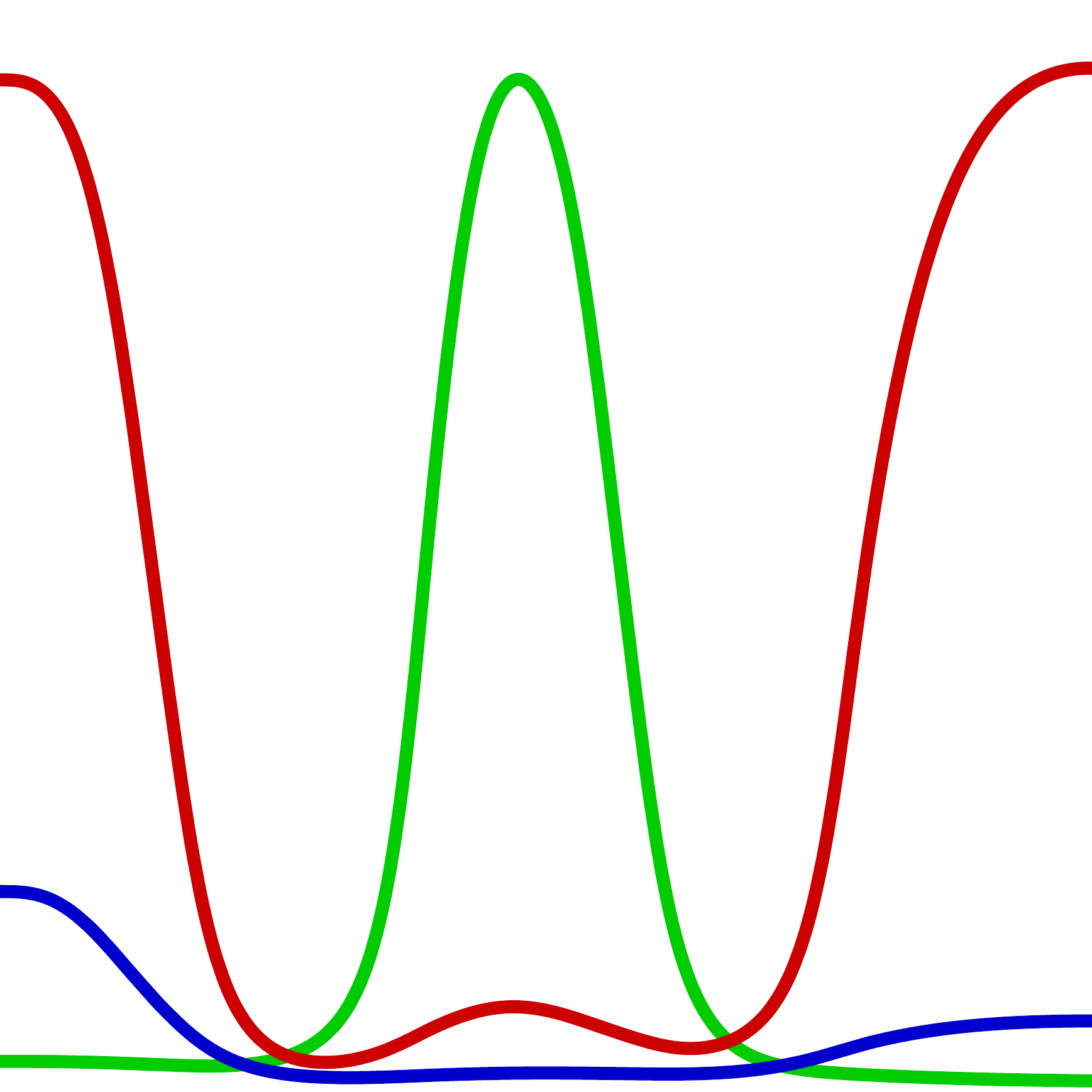};
\end{axis}
\end{tikzpicture} 
\hspace*{\fill}
\begin{tikzpicture}[scale=1]
\begin{axis}[
  tick label style={font=\scriptsize,major tick length=3pt},
          scale only axis,
  enlargelimits=false,
  xtick={0, 1.31812,3.14159,4.71239,6.28319,8},
  xticklabels={0,,,,,$8$},
  ytick={0,1},
  max space between ticks=50,
                minor x tick num=3,
                minor y tick num=10,                
  xlabel={\small $x$},
  ylabel={\small $u(x)$},
every axis x label/.style={
below,
at={(1.4cm,0.1cm)},
  yshift=-8pt
  },
every axis y label/.style={
below,
at={(0cm,1.4cm)},
  xshift=-3pt},
  y label style={rotate=90,anchor=south},
  width=2.8cm,
  height=2.8cm,  
  xmin=0,
  xmax=8,
  ymin=0,
  ymax=1] 
\addplot graphics[xmin=0,xmax=8,ymin=0,ymax=1] {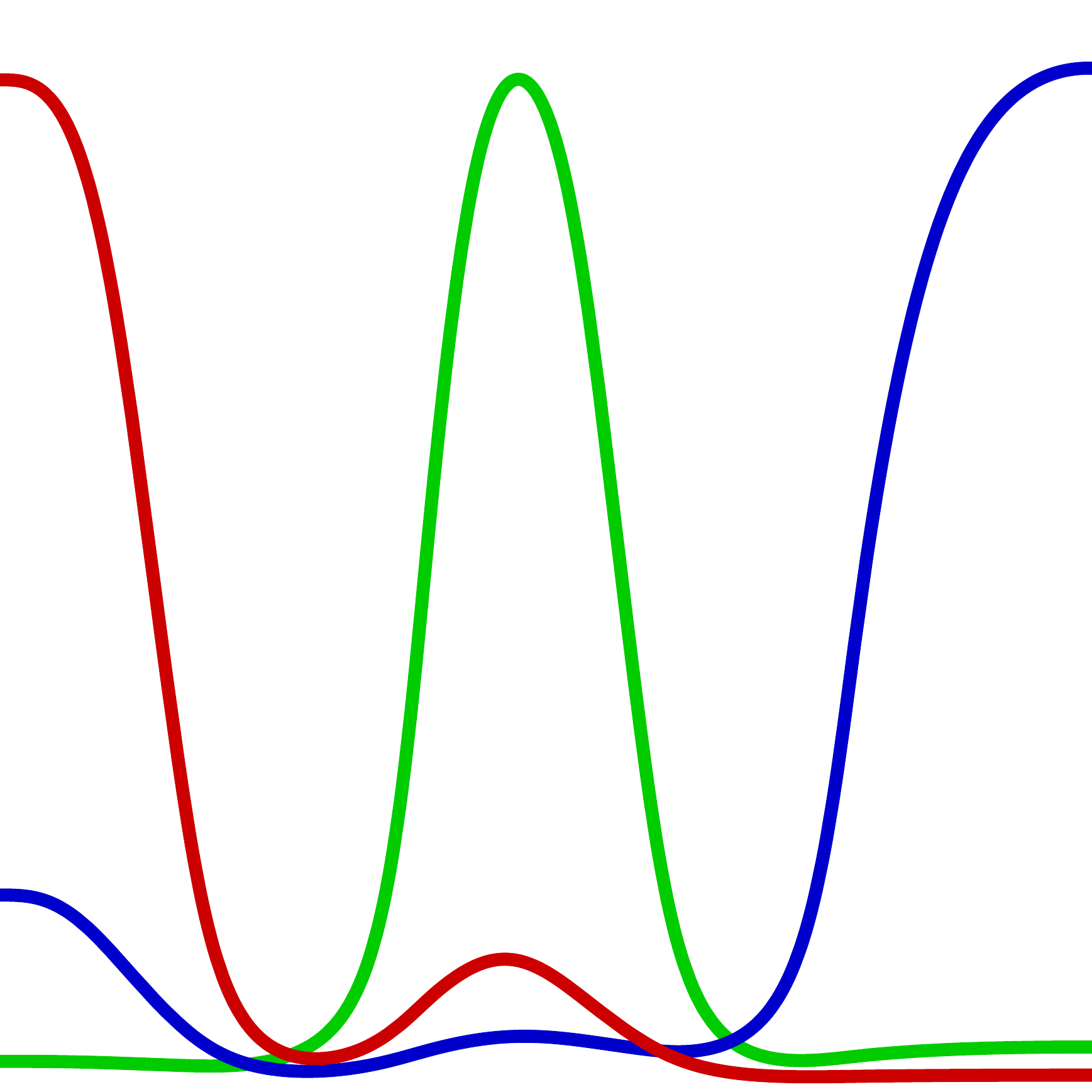};
\end{axis}
\end{tikzpicture} 
\hspace*{\fill}
\begin{tikzpicture}[scale=1]
\begin{axis}[
  tick label style={font=\scriptsize,major tick length=3pt},
          scale only axis,
  enlargelimits=false,
  xtick={0, 1.31812,3.14159,4.71239,6.28319,8},
  xticklabels={0,,,,,$8$},
  ytick={0,1},
  max space between ticks=50,
                minor x tick num=3,
                minor y tick num=10,                
  xlabel={\small $x$},
  ylabel={\small $u(x)$},
every axis x label/.style={
below,
at={(1.4cm,0.1cm)},
  yshift=-8pt
  },
every axis y label/.style={
below,
at={(0cm,1.4cm)},
  xshift=-3pt},
  y label style={rotate=90,anchor=south},
  width=2.8cm,
  height=2.8cm,  
  xmin=0,
  xmax=8,
  ymin=0,
  ymax=1] 
\addplot graphics[xmin=0,xmax=8,ymin=0,ymax=1] {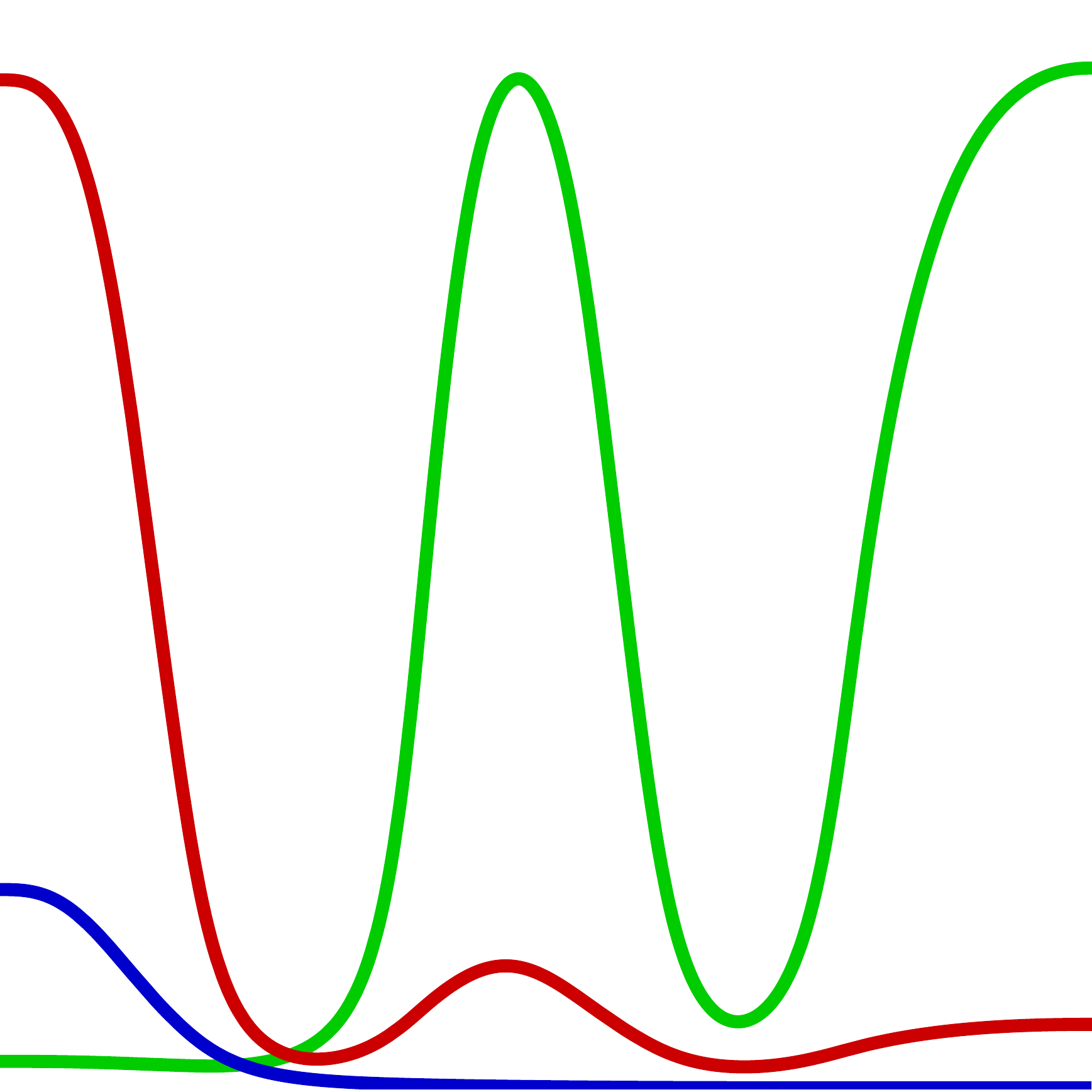};
\end{axis}
\end{tikzpicture} 
\caption{Graphs of $26 = 3^3-1$ positive solutions of the Neumann boundary value problem associated with $(\mathscr{E}_{\lambda,\mu})$, where $c = 1$, $a(x)$ is as in sub-figure (A) with $P=8$ (and so $m=3$), $g(u)$ is as in sub-figure (B), $\lambda=12$, and $\mu=80$.}
\end{subfigure} 
\caption{High multiplicity of positive solutions for the indefinite Neumann boundary value problem associated with $(\mathscr{E}_{\lambda,\mu})$. 
}     
\label{fig-1}
\end{figure}

\subsection{Stability issues}\label{section-6.3}

Dealing with equation \eqref{eq-c0} and assuming further that $g(u)$ is of class $\mathcal{C}^{2}$ in an interval $\mathopen{[}0,\varepsilon\mathclose{]}$ and satisfies
$g''(u) > 0$ for every $u \in \mathopen{]}0,\varepsilon\mathclose{]}$, some information about the linear (in)stability
of the solutions found in Theorem~\ref{th-exis} and Theorem~\ref{th-mult} can be given.
Here, linear stability/instability is meant in the sense of steady states of the corresponding parabolic problem, that is, a $P$-periodic solution $u(x)$ of \eqref{eq-c0}
is said to be \textit{linearly stable} (respectively, \textit{linearly unstable}) if the principal eigenvalue $\nu_{0}$ of the $P$-periodic problem associated with
\begin{equation*}
v'' + \bigl{(} \nu + a_{\lambda,\mu}(x) g'(u(x)) \bigr{)} v = 0
\end{equation*}
satisfies $\nu_{0} \geq 0$ (respectively, $\nu_{0} < 0$), cf.~\cite[Definition 2.1]{LGMMTe-13}. 
The same definition can be given when \eqref{eq-c0} is considered together with Dirichlet or Neumann boundary conditions (of course, the principal eigenvalue is meant with respect to the corresponding boundary conditions). It is worth mentioning that, for $P$-periodic solutions, this notion of linear stability is completely unrelated with respect to the more traditional one, based on Floquet theory,
arising as the linear version of Lyapunov stability \cite{Or-17}.

Taking into account the above discussion, one can apply \cite[Lemma~4.2]{BoFe-18} ensuring that $\nu_{0} < 0$ for every positive $P$-periodic solution $u(x)$ of \eqref{eq-c0} satisfying $\| u \|_\infty < \varepsilon$. Therefore, choosing $\rho \in \mathopen{]}0,\varepsilon\mathclose{[}$ in Theorem~\ref{th-mult}, we conclude that all the $2^m-1$ solutions 
associated with the strings $\mathcal{S}$ with $\mathcal{S}_{i} \neq 2$ for all $i=1,\ldots,m$, are linearly unstable (recall that, by property \eqref{eq-flower}, these solutions satisfy $\| u \|_\infty < \rho$). By a careful checking of the computation in \cite[Lemma~4.2]{BoFe-18}, one can deduce the same conclusion when Dirichlet/Neumann boundary conditions are taken into account.

In the same way we can also deduce that the small solution $u_{s}(x)$ in Theorem~\ref{th-exis} is linearly unstable: this is consistent with~\cite[Theorem 1.3]{LNS-10}, proving, for the Neumann problem, that one solution is unstable (while a second one is stable). 

\subsection{Asymptotic analysis}\label{section-6.4}

Using the arguments described in \cite[Section~5]{BoFeZa-18tams} and in \cite[Section~3.5]{FeZa-17jde}, it is possible to investigate the asymptotic behavior for $\mu \to +\infty$ of the solutions provided by Theorem~\ref{th-mult} and Theorem~\ref{th-chaos} (with $\lambda > \lambda^*$ fixed). More precisely, if $\{ u_{\mathcal{S},\mu}(x) \}_{\mu > \mu^{*}(\lambda)}$ denotes a family of solutions coded by the same string 
$\mathcal{S}$, one can show that, up to subsequences, the following hold:
\begin{itemize}
\item $u_{\mathcal{S},\mu}(x)$ converges to zero uniformly in all the negativity intervals of $a(x)$;
\item $u_{\mathcal{S},\mu}(x)$ converges to zero uniformly in the positivity intervals $I^{+}_{i,\ell}$ such that $\mathcal{S}_{i+\ell m} = 0$;
\item $u_{\mathcal{S},\mu}(x)$ converges to a positive solution of the Dirichlet problem associated with $(\mathscr{E}_{\lambda,\mu})$ 
on the positivity intervals $I^{+}_{i,\ell}$ such that $\mathcal{S}_{i+\ell m} \in \{1,2\}$ (notice that, from this discussion, it follows that such Dirichlet problems have at least two positive solutions, cf.~\cite{Ra-7374}).
\end{itemize}
Similarly, one can discuss the case of Dirichlet and Neumann boundary conditions (in the Neumann case, whenever $a(x)$ starts or ends with a positivity interval $I^{+}_{i}$ with corresponding $\mathcal{S}_{i} \in \{1,2\}$, then $u_{\mathcal{S},\mu}(x)$ converges in such an interval to a positive solution of a mixed Dirichlet/Neumann problem). We omit the details for briefness.

It is worth mentioning that, for the one-parameter equation
\begin{equation*}
u'' + \lambda a(x) g(u) = 0,
\end{equation*}
that is, equation $(\mathscr{E}_{\lambda,\mu})$ for $\lambda = \mu$ and $c = 0$, the asymptotic behavior of the 
two positive solutions when $\lambda \to +\infty$ has been carefully investigated in \cite{NNS-10}. Roughly speaking, the small solution
$u_{s}(x)$ converges to zero uniformly in the whole $\mathopen{[}0,P\mathclose{]}$, while the large solution $u_{\ell}(x)$ converges to $1$ (respectively, to $0$) 
uniformly on every compact subinterval of the interior of the positivity intervals (respectively, negativity intervals), see \cite[Theorem~1.3]{NNS-10} for the precise statement. Of course, this result is unrelated with the one discussed above for the two-parameter equation $(\mathscr{E}_{\lambda,\mu})$, since in the latter case $\lambda$ is fixed (and $\mu \to +\infty$). See also Figure~\ref{fig-2} for a numerical investigation.

\begin{figure}[!htb]
\centering
\begin{subfigure}[t]{0.45\textwidth}
\centering
\begin{tikzpicture}[scale=1]
\begin{axis}[
  tick label style={font=\scriptsize},
          scale only axis,
  enlargelimits=false,
  xtick={0, 1.31812,3.14159},
  xticklabels={0, ,$\pi$},
  ytick={0,1},
  max space between ticks=50,
                minor x tick num=3,
                minor y tick num=10,                
  xlabel={\small $x$},
  ylabel={\small $u(x)$},
every axis x label/.style={
below,
at={(2.2cm,0.1cm)},
  yshift=-8pt
  },
every axis y label/.style={
below,
at={(0cm,1.8cm)},
  xshift=-3pt},
  y label style={rotate=90,anchor=south},
  width=4.4cm,
  height=3.6cm,  
  xmin=0,
  xmax=3.14159,
  ymin=0,
  ymax=1] 
\addplot graphics[xmin=0,xmax=3.14159,ymin=0,ymax=1] {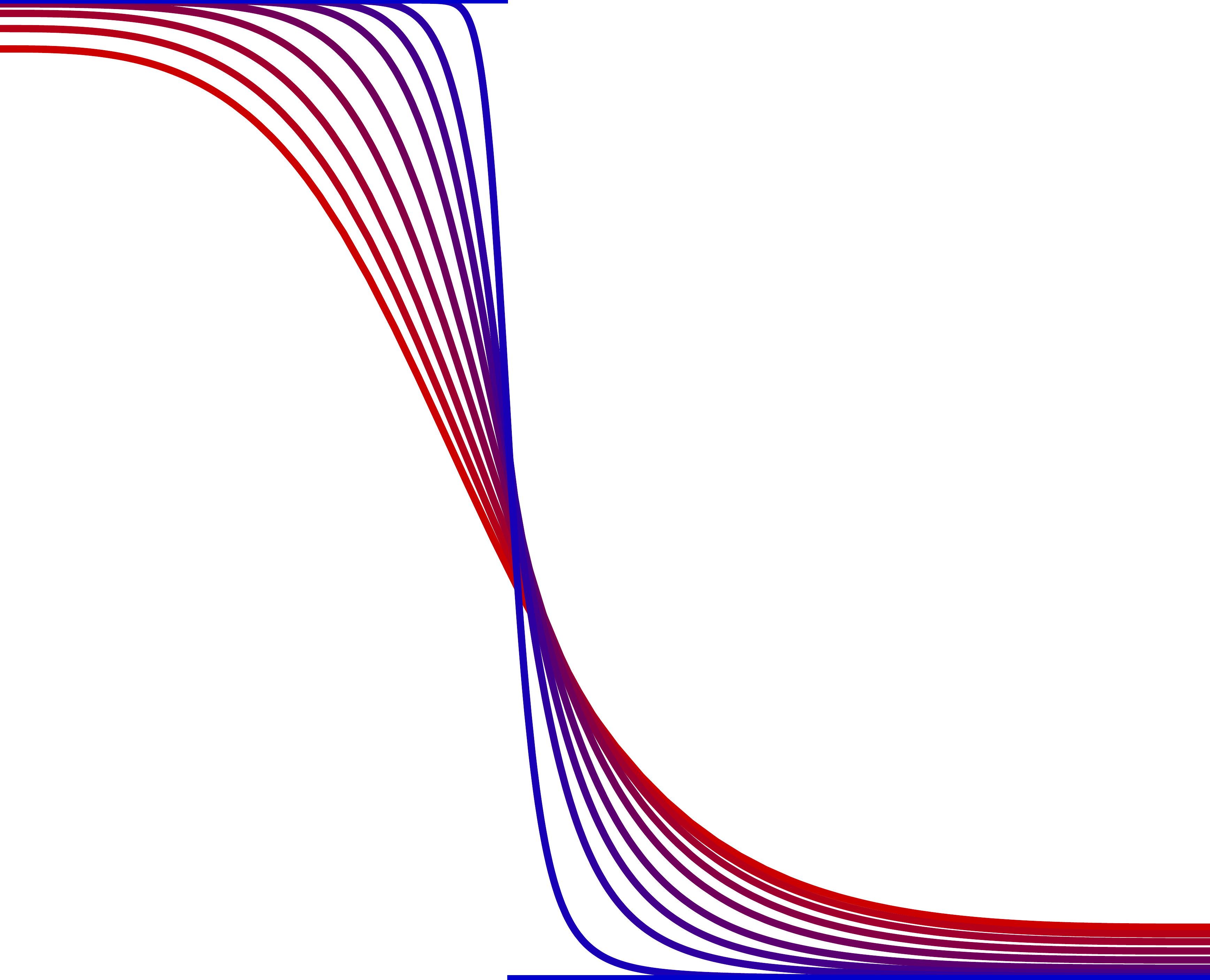};
\end{axis}
\end{tikzpicture}
\caption{Graph of the ``large'' solution $u_{\ell}(x)$ of the Neumann boundary value problem associated with $(\mathscr{E}_{\lambda,\mu})$, where $c=1$, $a(x)$ and $g(u)$ are as in Figure~\ref{fig-1} with $P=\pi$ (and so $m=1$). Notice that $\int_0^{\pi} a(x)\,\mathrm{d}x < 0$. We take $\lambda=\mu\in\{12,$ $15,20,30,50,100,200,500,5000\}$ and we represent also the limit profile.}
\end{subfigure} 
\hspace*{\fill}
\begin{subfigure}[t]{0.45\textwidth}
\centering
\begin{tikzpicture}[scale=1]
\begin{axis}[
  tick label style={font=\scriptsize},
          scale only axis,
  enlargelimits=false,
  xtick={0, 1.31812,3.14159},
  xticklabels={0, ,$\pi$},
  ytick={0,1},
  max space between ticks=50,
                minor x tick num=0,
                minor y tick num=10,                
  xlabel={\small $x$},
  ylabel={\small $u(x)$},
every axis x label/.style={
below,
at={(2.2cm,0.1cm)},
  yshift=-8pt
  },
every axis y label/.style={
below,
at={(0cm,1.8cm)},
  xshift=-3pt},
  y label style={rotate=90,anchor=south},
  width=4.4cm,
  height=3.6cm,  
  xmin=0,
  xmax=3.14159,
  ymin=0,
  ymax=1] 
\addplot graphics[xmin=0,xmax=3.14159,ymin=0,ymax=1] {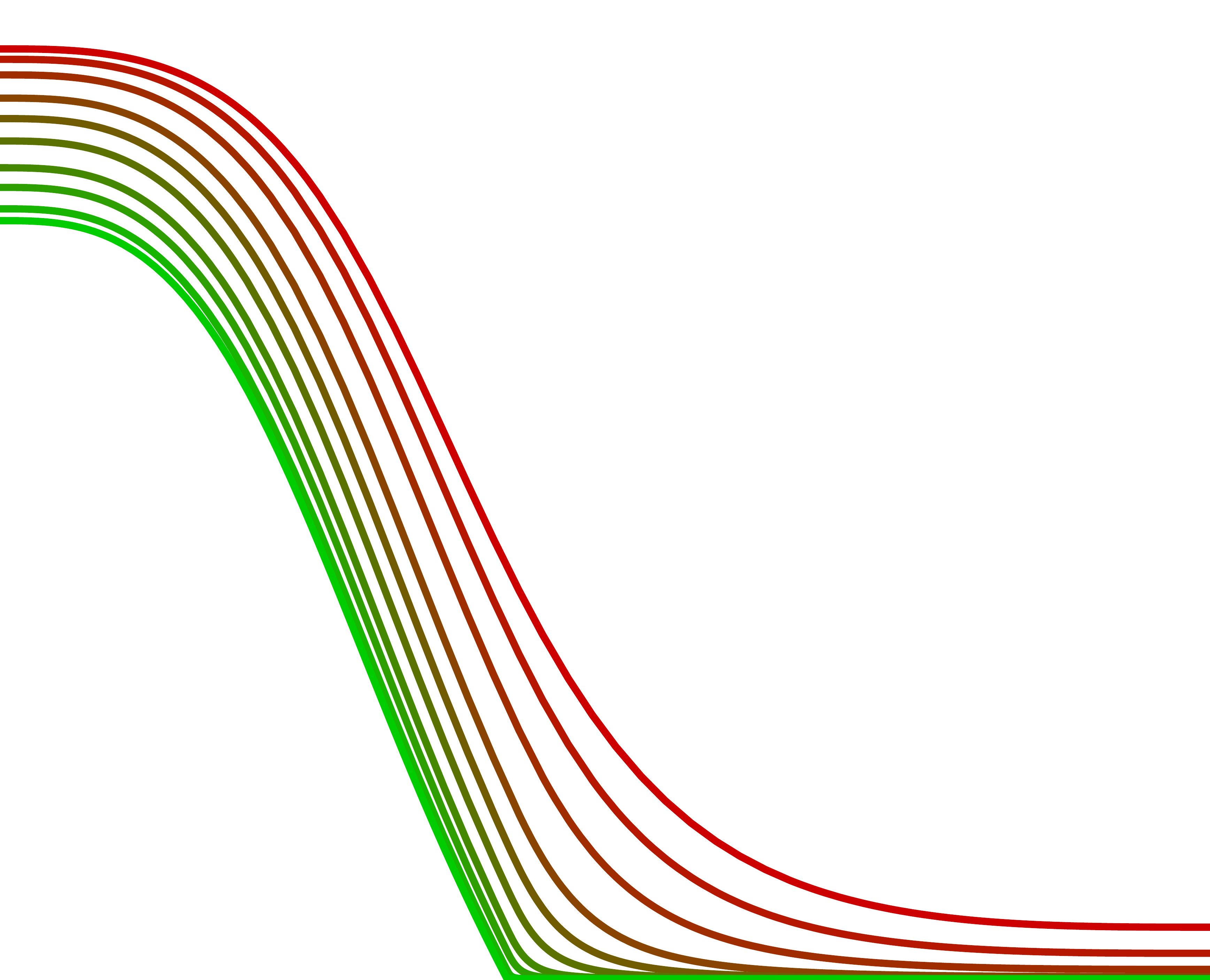};
\end{axis}
\end{tikzpicture}
\caption{Graph of the ``large'' solution $u_{\ell}(x)$ of the Neumann boundary value problem associated with $(\mathscr{E}_{\lambda,\mu})$, where $c=1$, $a(x)$ and $g(u)$ are as in Figure~\ref{fig-1} with $P=\pi$ (and so $m=1$).
We take $\lambda=12$ and $\mu\in\{12,30,100,500,$ $2000,10^{4},10^{5},10^{6},10^{8}\}$ and we represent also the limit profile.}
\end{subfigure} 
\caption{Asymptotic analysis for the indefinite Neumann boundary value problem associated with $(\mathscr{E}_{\lambda,\mu})$ with respect to the parameters $\lambda$ and $\mu$.}     
\label{fig-2}
\end{figure}

\bibliographystyle{elsart-num-sort}
\bibliography{BoFeSo-biblio}

\begin{thebibliography}{10}
\expandafter\ifx\csname url\endcsname\relax
  \def\url#1{\texttt{#1}}\fi
\expandafter\ifx\csname urlprefix\endcsname\relax\def\urlprefix{URL }\fi

\bibitem{AlTa-96}
S.~Alama, G.~Tarantello, Elliptic problems with nonlinearities indefinite in
  sign, J. Funct. Anal. 141 (1996) 159--215.

\bibitem{AmLG-98}
H.~Amann, J.~L\'{o}pez-G\'{o}mez, A priori bounds and multiple solutions for
  superlinear indefinite elliptic problems, J. Differential Equations 146
  (1998) 336--374.

\bibitem{BaPoTe-88}
C.~Bandle, M.~A. Pozio, A.~Tesei, Existence and uniqueness of solutions of
  nonlinear {N}eumann problems, Math. Z. 199 (1988) 257--278.

\bibitem{BeCDNi-94}
H.~Berestycki, I.~Capuzzo-Dolcetta, L.~Nirenberg, Superlinear indefinite
  elliptic problems and nonlinear {L}iouville theorems, Topol. Methods
  Nonlinear Anal. 4 (1994) 59--78.

\bibitem{BoGoHa-05}
D.~Bonheure, J.~M. Gomes, P.~Habets, Multiple positive solutions of superlinear
  elliptic problems with sign-changing weight, J. Differential Equations 214
  (2005) 36--64.

\bibitem{Bo-11}
A.~Boscaggin, A note on a superlinear indefinite {N}eumann problem with
  multiple positive solutions, J. Math. Anal. Appl. 377 (2011) 259--268.

\bibitem{BoDaPa-17}
A.~Boscaggin, W.~Dambrosio, D.~Papini, Multiple positive solutions to elliptic
  boundary blow-up problems, J. Differential Equations 262 (2017) 5990--6017.

\bibitem{BoFe-PP}
A.~Boscaggin, G.~Feltrin, Positive periodic solutions to an indefinite
  {M}inkowski-curvature equation, arXiv:1805.06659.

\bibitem{BoFe-18}
A.~Boscaggin, G.~Feltrin, Positive subharmonic solutions to nonlinear {ODE}s
  with indefinite weight, Commun. Contemp. Math. 20 (2018) 1750021, 26 pp.

\bibitem{BoFeZa-16}
A.~Boscaggin, G.~Feltrin, F.~Zanolin, Pairs of positive periodic solutions of
  nonlinear {ODE}s with indefinite weight: a topological degree approach for
  the super-sublinear case, Proc. Roy. Soc. Edinburgh Sect. A 146 (2016)
  449--474.

\bibitem{BoFeZa-18tams}
A.~Boscaggin, G.~Feltrin, F.~Zanolin, Positive solutions for super-sublinear
  indefinite problems: high multiplicity results via coincidence degree, Trans.
  Amer. Math. Soc. 370 (2018) 791--845.

\bibitem{BrHe-90}
K.~J. Brown, P.~Hess, Stability and uniqueness of positive solutions for a
  semi-linear elliptic boundary value problem, Differential Integral Equations
  3 (1990) 201--207.

\bibitem{Bu-76}
G.~J. Butler, Rapid oscillation, nonextendability, and the existence of
  periodic solutions to second order nonlinear ordinary differential equations,
  J. Differential Equations 22 (1976) 467--477.

\bibitem{Da-88}
E.~N. Dancer, The effect of domain shape on the number of positive solutions of
  certain nonlinear equations, J. Differential Equations 74 (1988) 120--156.

\bibitem{Da-90}
E.~N. Dancer, The effect of domain shape on the number of positive solutions of
  certain nonlinear equations. {II}, J. Differential Equations 87 (1990)
  316--339.

\bibitem{DoZa-PP}
T.~Dond\`e, F.~Zanolin, Multiple periodic solutions for one-sided sublinear
  systems: {A} refinement of the {P}oincar\'{e}-{B}irkhoff approach,
  arXiv:1901.09406.

\bibitem{Fe-18}
G.~Feltrin, Positive Solutions to Indefinite Problems: A Topological Approach,
  Frontiers in Mathematics, Birkh\"auser/Springer, Cham, Switzerland, 2018.

\bibitem{Fe-18dcdss}
G.~Feltrin, Positive subharmonic solutions to superlinear {ODE}s with
  indefinite weight, Discrete Contin. Dyn. Syst. Ser. S 11 (2018) 257--277.

\bibitem{FeSo-18non}
G.~Feltrin, E.~Sovrano, An indefinite nonlinear problem in population dynamics:
  high multiplicity of positive solutions, Nonlinearity 31 (2018) 4137--4161.

\bibitem{FeSo-18na}
G.~Feltrin, E.~Sovrano, Three positive solutions to an indefinite {N}eumann
  problem: a shooting method, Nonlinear Anal. 166 (2018) 87--101.

\bibitem{FeZa-15jde}
G.~Feltrin, F.~Zanolin, Multiple positive solutions for a superlinear problem:
  a topological approach, J. Differential Equations 259 (2015) 925--963.

\bibitem{FeZa-17jde}
G.~Feltrin, F.~Zanolin, Multiplicity of positive periodic solutions in the
  superlinear indefinite case via coincidence degree, J. Differential Equations
  262 (2017) 4255--4291.

\bibitem{GaMa-77}
R.~E. Gaines, J.~Mawhin, Coincidence degree, and nonlinear differential
  equations, vol. 568 of Lecture Notes in Mathematics, Springer-Verlag,
  Berlin-New York, 1977.

\bibitem{GaHaZa-03}
M.~Gaudenzi, P.~Habets, F.~Zanolin, An example of a superlinear problem with
  multiple positive solutions, Atti Sem. Mat. Fis. Univ. Modena 51 (2003)
  259--272.

\bibitem{GaHaZa-04}
M.~Gaudenzi, P.~Habets, F.~Zanolin, A seven-positive-solutions theorem for a
  superlinear problem, Adv. Nonlinear Stud. 4 (2004) 149--164.

\bibitem{GRLG-00}
R.~G\'{o}mez-Re\~{n}asco, J.~L\'{o}pez-G\'{o}mez, The effect of varying
  coefficients on the dynamics of a class of superlinear indefinite
  reaction-diffusion equations, J. Differential Equations 167 (2000) 36--72.

\bibitem{HaZa-17}
R.~Hakl, M.~Zamora, Periodic solutions to second-order indefinite singular
  equations, J. Differential Equations 263 (2017) 451--469.

\bibitem{Ha-48}
J.~B. Haldane, The theory of a cline, J. Genet 48 (1948) 277--284.

\bibitem{Ha-80}
J.~K. Hale, Ordinary differential equations, 2nd ed., Robert E. Krieger
  Publishing Co., Inc., Huntington, N.Y., 1980.

\bibitem{He-81}
D.~Henry, Geometric theory of semilinear parabolic equations, vol. 840 of
  Lecture Notes in Mathematics, Springer-Verlag, Berlin-New York, 1981.

\bibitem{HeKa-80}
P.~Hess, T.~Kato, On some linear and nonlinear eigenvalue problems with an
  indefinite weight function, Comm. Partial Differential Equations 5 (1980)
  999--1030.

\bibitem{LGMMTe-13}
J.~L\'{o}pez-G\'{o}mez, M.~Molina-Meyer, A.~Tellini, The uniqueness of the
  linearly stable positive solution for a class of superlinear indefinite
  problems with nonhomogeneous boundary conditions, J. Differential Equations
  255 (2013) 503--523.

\bibitem{LGOmRi-17}
J.~L\'{o}pez-G\'{o}mez, P.~Omari, S.~Rivetti, Positive solutions of a
  one-dimensional indefinite capillarity-type problem: a variational approach,
  J. Differential Equations 262 (2017) 2335--2392.

\bibitem{LoNa-02}
Y.~Lou, T.~Nagylaki, A semilinear parabolic system for migration and selection
  in population genetics, J. Differential Equations 181 (2002) 388--418.

\bibitem{LNN-13}
Y.~Lou, T.~Nagylaki, W.-M. Ni, An introduction to migration-selection {PDE}
  models, Discrete Contin. Dyn. Syst. 33 (2013) 4349--4373.

\bibitem{LNS-10}
Y.~Lou, W.-M. Ni, L.~Su, An indefinite nonlinear diffusion problem in
  population genetics. {II}. {S}tability and multiplicity, Discrete Contin.
  Dyn. Syst. 27 (2010) 643--655.

\bibitem{Ma-79}
J.~Mawhin, Topological degree methods in nonlinear boundary value problems,
  vol.~40 of CBMS Regional Conference Series in Mathematics, American
  Mathematical Society, Providence, R.I., 1979.

\bibitem{Ma-93}
J.~Mawhin, Topological degree and boundary value problems for nonlinear
  differential equations, in: Topological methods for ordinary differential
  equations ({M}ontecatini {T}erme, 1991), vol. 1537 of Lecture Notes in
  Mathematics, Springer, Berlin, 1993, pp. 74--142.

\bibitem{Na-75}
T.~Nagylaki, Conditions for the existence of clines, Genetics 3 (1975)
  595--615.

\bibitem{Na-89}
T.~Nagylaki, The diffusion model for migration and selection, in: Some
  mathematical questions in biology---models in population biology ({C}hicago,
  {IL}, 1987), vol.~20 of Lectures Math. Life Sci., Amer. Math. Soc.,
  Providence, RI, 1989, pp. 55--75.

\bibitem{NNS-10}
K.~Nakashima, W.-M. Ni, L.~Su, An indefinite nonlinear diffusion problem in
  population genetics. {I}. {E}xistence and limiting profiles, Discrete Contin.
  Dyn. Syst. 27 (2010) 617--641.

\bibitem{Or-17}
R.~Ortega, Stability of periodic solutions of {H}amiltonian systems with low
  dimension, Rend. Semin. Mat. Univ. Politec. Torino 75 (2017) 53--78.

\bibitem{Ra-7374}
P.~H. Rabinowitz, Pairs of positive solutions of nonlinear elliptic partial
  differential equations, Indiana Univ. Math. J. 23 (1973/74) 173--186.

\bibitem{So-17jomb}
E.~Sovrano, A negative answer to a conjecture arising in the study of
  selection-migration models in population genetics, J. Math. Biol. 76 (2018)
  1655--1672.

\bibitem{SoZa-17}
E.~Sovrano, F.~Zanolin, Indefinite weight nonlinear problems with {N}eumann
  boundary conditions, J. Math. Anal. Appl. 452 (2017) 126--147.

\bibitem{Te-18}
A.~Tellini, High multiplicity of positive solutions for superlinear indefinite
  problems with homogeneous {N}eumann boundary conditions, J. Math. Anal. Appl.
  467 (2018) 673--698.

\bibitem{Ur-17}
A.~J. Ure\~{n}a, A counterexample for singular equations with indefinite
  weight, Adv. Nonlinear Stud. 17 (2017) 497--516.

\end{thebibliography}

\end{document}